\newtheorem{proposition}{Proposition}[section]
\newtheorem{theorem}[proposition]{Theorem}
\newtheorem{lemma}[proposition]{Lemma}
\newtheorem{definition}[proposition]{Definition}
\newtheorem{remark}[proposition]{Remark}
\newtheorem{condition}[proposition]{Condition}
\numberwithin{equation}{section}
\numberwithin{proposition}{section}
\newenvironment{proof}[1][Proof]{\noindent\textbf{#1.} }{\ 
\rule{0.5em}{0.5em}}
\begin{document}

\title{Moderate Deviation Principles for Stochastic Differential Equations
with Jumps}
\author{Amarjit Budhiraja\thanks{%
DEPARTMENT OF STATISTICS AND OPERATIONS RESEARCH, UNIVERSITY OF NORTH CAROLINA, CHAPEL HILL, email: {\tt budhiraj@email.unc.edu}. \newline
Research supported in part by the National Science Foundation (DMS-1004418,
DMS-1016441, DMS-1305120) and the Army Research Office (W911NF-10-1-0158).},
Paul Dupuis\thanks{%
DIVISION OF APPLIED MATHEMATICS, BROWN UNIVERSITY,  email: {\tt dupuis@dam.brown.edu}. \newline
Research supported in part by the National Science Foundation (DMS-1317199),
and the Air Force Office of Scientific Research (FA9550-12-1-0399), and the
Army Research Office (W911NF-12-1-0222).}, Arnab Ganguly\thanks{%
DEPARTMENT OF MATHEMATICS, UNIVERSITY OF LOUISVILLE, email: {\tt a0gang02@louisville.edu}. \newline 
Research supported in part by the Air Force Office of Scientific Research
(FA9550-12-1-0399).} }
\maketitle

\begin{abstract}
Moderate deviation principles for stochastic differential equations driven
by a Poisson random measure (PRM) in finite and infinite dimensions are
obtained. Proofs are based on a variational representation for expected
values of positive functionals of a PRM.  \newline
\ \newline

\noindent\textbf{MSC 2010 subject classifications:} \newline
60F10, 60H15, 60J75, 60J25 \newline

\noindent\textbf{Keywords:} Moderate deviations, large deviations, Poisson
random measures, stochastic differential equations, stochastic partial differential equations.
\end{abstract}

\setcounter{equation}{0}

\section{Introduction}

Large deviation principles for small noise diffusion equations have been
extensively studied in the literature. Since the original work of Freidlin
and Wentzell \cite{FV70, FW84}, model assumptions have been significantly
relaxed and many extensions have been studied in both finite-dimensional and
infinite-dimensional settings. In \cite{BD00, BDM08} a general approach for
studying large deviation problems in such settings has been introduced that
is based on a variational representation for expectations of positive
functionals of an infinite
dimensional Brownian motion. This approach has now been adopted for the
study of large deviation problems for a broad range of stochastic partial
differential equation based models, particularly those arising in stochastic
fluid dynamics, and also for settings where the coefficients in the model
have little regularity. We refer the reader to \cite{BDM09} for a partial
list of references. Large deviation problems for finite dimensional
diffusions with jumps have been studied by several authors (see for example 
\cite{LP92, DE97}). In contrast, it is only recently that the analogous
problems for infinite dimensional stochastic differential equations (SDE)
have received attention \cite{RZ07,SZ11}. In \cite{BDM09} a variational
representation for expected values of positive functionals of a general
Poisson random measure (or more generally, functions that depend both on a
Poisson random measure and an infinite dimensional Brownian motion) was
derived. As in the Brownian motion case, the representation is motivated in
part by applications to large deviation problems, and \cite{BDM09}
illustrates how the representation can be applied in a simple finite
dimensional setting. In \cite{BCD} the representation was used to study
large deviation properties of a family of infinite dimensional SDE driven by
a Poisson random measure (PRM).

The goal of the current work is to study moderate deviation problems for
stochastic dynamical systems. In such a study one is concerned with
probabilities of deviations of a smaller order than in large deviation
theory. Consider for example an independent and identically distributed
(iid) sequence $\{Y_{i}\}_{i\geq 1}$ of $\mathbb{R}^{d}$-valued zero mean
random variables with common probability law $\rho $. A large deviation
principle (LDP) for $S_{n}=\sum_{i=1}^{n}Y_{i}$ will formally say that for $%
c>0$ 
\begin{equation*}
\mathbb{P}(|S_{n}|>nc)\approx \exp \{-n\inf \{I(y):|y|\geq c\}\},
\end{equation*}%
where for $y\in \mathbb{R}^{d}$, $I(y)=\sup_{\alpha \in \mathbb{R}%
^{d}}\{\langle \alpha ,y\rangle -\log \int_{\mathbb{R}^{d}}\exp \langle
\alpha ,y\rangle \rho (dy)\}$. Now let $\{a_{n}\}$ be a positive sequence
such that $a_{n}\uparrow \infty $ and $n^{-1/2}a_{n}\rightarrow 0$ as $%
n\rightarrow \infty $ (e.g. $a_{n}=n^{1/4}$). Then a moderate deviation
principle (MDP) for $S_{n}$ will say that 
\begin{equation*}
\mathbb{P}(|S_{n}|>n^{1/2}a_{n}c)\approx \exp \{-a_{n}^{2}\inf
\{I^{0}(y):|y|\geq c\}\},
\end{equation*}%
where $I^{0}(y)=\frac{1}{2}\left\langle y,\Sigma ^{-1}y\right\rangle $ and $%
\Sigma =\mbox{Cov}(Y)$. Thus the moderate deviation principle gives
estimates on probabilities of deviations of order $n^{1/2}a_{n}$ which is of
lower order than $n$ and with a rate function that is a quadratic form.
Since $a_n \to \infty$ as slowly as desired, moderate deviations bridge the gap between a central limit approximation
and a large deviations approximation.
Moderate deviation principles have been extensively studied in Mathematical
Statistics. Early research considered the setting of iid sequences and
arrays (see \cite{Os72,Mi76,Am79, Am80, Am82,RS65,Pe99,Fr05}). Empirical
processes in general topological spaces have been studied in \cite{BM78,
BM80, DB83, Ch91, Le92, dAc92, Ar03}. The setting of weakly dependent
sequences was covered in \cite{Gh74, Gh75, Ch97, GB77, BS78, Ga96, De96,
Gr97, Dj02, GH06,DZ97}, and MDPs for occupation measures of Markov chains
and general additive functionals of Markov chains were considered in \cite%
{Ga95, Lim95, CdA97, CdA98, Ch01, Gu00,CG04,GZ98}.

Moderate deviation principles for continuous time stochastic dynamical
systems are less well studied. The paper \cite{LS99} considers a
finite-dimensional two scale diffusion model under stochastic averaging.
Additional results involving moderate deviations and the averaging principle
were obtained in \cite{Gu01, Gu03, CL09}. The paper \cite{HS04} considered a
certain diffusion process with Brownian potentials and derived moderate
deviation estimates for its longtime behavior. Moderate deviation results in
the context of statistical inference for finite dimensional diffusions have
been considered in \cite{DGW99, Hu10, GJ09}. None of the above results
consider stochastic dynamical systems with jumps or infinite dimensional
models.

In this paper we study moderate deviation principles for finite and infinite
dimensional SDE with jumps. For simplicity we consider only settings where
the noise is given in terms of a PRM and there is no Brownian component.
However, as noted in Remark \ref{rem:rem1150}, the more general case where
both Poisson and Brownian noises are present can be treated similarly. In
finite dimensions, the basic stochastic dynamical system we study takes the
form 
\begin{equation*}
X^{\varepsilon }(t)=x_{0}+\int_{0}^{t}b(X^{\varepsilon }(s))ds+\int_{\mathbb{%
X}\times \lbrack 0,t]}\varepsilon G(X^{\varepsilon }(s-),y)N^{\varepsilon
^{-1}}(dy,ds).
\end{equation*}%
Here $b:\mathbb{R}^{d}\rightarrow \mathbb{R}^{d}$ and $G:\mathbb{R}%
^{d}\times \mathbb{X}\rightarrow \mathbb{R}^{d}$ are suitable coefficients
and $N^{\varepsilon ^{-1}}$ is a Poisson random measure on $\mathbb{X}_{T}=%
\mathbb{X}\times \lbrack 0,T]$ with intensity measure $\varepsilon ^{-1}\nu
_{T}=\varepsilon ^{-1}\nu \otimes \lambda _{T}$, where $\mathbb{X}$ is a
locally compact Polish space, $\nu $ is a locally finite measure on $\mathbb{%
X}$, $\lambda _{T}$ is the Lebesgue measure on $[0,T]$ and $\varepsilon >0$
is the scaling parameter. Under conditions $X^{\varepsilon }$ will converge
in probability (in a suitable path space) to $X^{0}$ given as the solution
of the ODE 
\begin{equation*}
\dot{X}^{0}(t)=b(X^{0}(t))+\int_{\mathbb{X}}G(X^{0}(t),y)\nu
(dy),\;X^{0}(0)=x_{0}.
\end{equation*}%
The moderate deviations problem for $\{X^{\varepsilon }\}_{\varepsilon >0}$
corresponds to studying asymptotics of $$(\varepsilon /a^{2}(\varepsilon
))\log \mathbb{P}(Y^{\varepsilon }\in \cdot ),$$ where $Y^{\varepsilon
}=(X^{\varepsilon }-X^{0})/a(\varepsilon )$ and $a(\varepsilon )\rightarrow
0 $, $\varepsilon /a^{2}(\varepsilon )\rightarrow 0$ as $\varepsilon
\rightarrow 0$. In this paper we establish a moderate deviations principle
under suitable conditions on $b$ and $G$. We
in fact give a rather general sufficient condition for a moderate deviation
principle to hold for systems driven by Poisson random measures (see Theorem %
\ref{LDPpoi}). This sufficient condition covers many finite and infinite
dimensional models of interest. A typical infinite dimensional model
corresponds to the SPDE 
\begin{align}
dX^{\varepsilon }(u,t)& =(LX^{\varepsilon }(u,t)+\beta (X^{\varepsilon
}(u,t)))dt+\varepsilon \int_{\mathbb{X}}G(X^{\varepsilon
}(u,t-),u,y)N^{\varepsilon ^{-1}}(ds,dy)  \label{SPDE} \\
X^{\varepsilon }(u,0)& =x(u),\quad u\in O\subset \mathbb{R}^{d}.  \notag
\end{align}%
where $L$ is a suitable differential operator, $O$ is a bounded domain in $%
\mathbb{R}^{d}$ and the equation is considered with a suitable boundary
condition on $\partial O$. Here $N^{\varepsilon ^{-1}}$ is a PRM as above.
The solution of such a SPDE has to be interpreted carefully, since typically
solutions for which $LX^{\varepsilon }(u,t)$ can be defined classically do
not exist. We follow the framework of \cite{KaXi95}, where the solution
space is described as the space of RCLL trajectories with values in the dual
of a suitable nuclear space (see Section \ref{sec:sec2.4} for precise
definitions). Roughly speaking, a nuclear space is given as an intersection
of a countable collection of Hilbert spaces, where the different spaces may
be viewed as \textquotedblleft function spaces\textquotedblright\ with
varying degree of regularity. Since the action of the differential operator $%
L$ on a function will typically produce a function with lesser regularity,
this framework of nested Hilbert spaces enables one to efficiently
investigate existence and uniqueness of solutions of SPDE of the form %
\eqref{SPDE}. Another common approach for studying equations of the form %
\eqref{SPDE} is through a mild solution formulation \cite{PZ07}. Although
not investigated here we expect that analogous results can be established
using such a formulation.

Large and moderate deviation approximations can provide qualitative and
quantitative information regarding complex stochastic models such as (\ref%
{SPDE}). For example, an equation studied in some detail at the end of this
paper models the concentration of pollutants in a waterway. Depending on the
event of interest either the large and moderate deviation approximation
could be appropriate, in which case one could use the rate function to
identify the most likely interactions between the pollution source and the
dynamics of the waterway that lead to a particular outcome, such as
exceeding an allowed concentration. However, the rate function only gives an asymptotic approximation for probabilities of such outcomes and the resulting error 
due to the use of this
approximation cannot be eliminated.

An alternative is to use numerical schemes such as Monte Carlo, which have
the property that if a large enough number of good quality samples can be
generated, then an arbitrary level of accuracy can be achieved. While this
may be true in principle, it is in practice difficult when considering
events of small probability, since many samples are required for errors that
are small relative to the quantity being computed. The issue is especially
relevant for a problem modeled by an equation as complex as (\ref{SPDE}),
since the generation of even a single sample could be relatively expensive.
Hence an interesting potential use of the results of the present paper are
to importance sampling and related accelerated Monte Carlo methods \cite%
{asmgly,liu}. If in fact the moderate deviation approximation is relevant,
the relatively simple form of the corresponding rate function suggests that
many of the constructions needed to implement an effective importance
sampling scheme \cite{dupwan} would be simpler than in the corresponding
large deviation context.

We now make some comments on the technique of proof. As in \cite{BCD}, the
starting point is the variational representation for expectations of
positive functionals of a PRM from \cite{BDM09}. The usefulness of
variational representation in proving large deviation or moderate deviation
type results lies in the fact that it allows one to bypass the traditional
route of approximating the original sequence of solutions by
discretizations; the latter approach is particularly cumbersome for SPDEs
and more so for SPDEs driven by Poisson random measure. Moreover the
variational representation approach does not require proving exponential
tightness and other exponential probability estimates that are frequently
some of the most technical parts of a traditional large deviations argument.
A key step in our approach is to prove the tightness for controlled versions
of the state processes given that the costs for controls are suitably
bounded. For example, to prove a moderate deviation principle for SPDEs of
the form \eqref{SPDE}, the tightness of the sequence of controlled processes 
$\bar{Y}^{\varepsilon ,\varphi ^{\varepsilon }}$ needs to be established,
where 
\begin{equation}
\bar{Y}^{\varepsilon ,\varphi ^{\varepsilon }}=\frac{1}{a(\varepsilon )}(%
\bar{X}^{\varepsilon ,\varphi ^{\varepsilon }}-X^{0})  \label{eq:centered}
\end{equation}%
\begin{equation*}
d\bar{X}^{\varepsilon ,\varphi ^{\varepsilon }}(u,t)=\left( L\bar{X}%
^{\varepsilon ,\varphi ^{\varepsilon }}(u,t)+b(\bar{X}^{\varepsilon ,\varphi
}(u,t))\right) dt+\int_{\mathbb{X}_{T}}\varepsilon G(\bar{X}^{\varepsilon
,\varphi ^{\varepsilon }}(u,t-),y)\ N^{\varepsilon ^{-1}\varphi
^{\varepsilon }}(dt,dy)
\end{equation*}%
and the controls $\varphi ^{\varepsilon }:\mathbb{X}\times \lbrack 0,T] \to [0,\infty)$ are
predictable processes satisfying $L_{T}(\varphi ^{\varepsilon })\leq
Ma^{2}(\varepsilon )$ for some constant $M$. Here $L_{T}$ denotes the large
deviation rate function associated with Poisson random measures (see %
\eqref{Ltdef}) and $N^{\varepsilon ^{-1}\varphi ^{\varepsilon }}$ is a
controlled Poisson random measure, namely a counting process with intensity $%
\varepsilon ^{-1}\varphi ^{\varepsilon }(x,s)\nu _{T}(dx,ds)$ (see 
\eqref{Eqn:
control} for a precise definition). In comparison (cf. \cite{BCD}), to prove
a large deviation principle for $X^{\varepsilon }$, the key step is proving
the tightness of the controlled processes $\bar{X}^{\varepsilon ,\varphi
^{\varepsilon }}$ with the controls $\varphi ^{\varepsilon }$ satisfying $%
L_{T}(\varphi ^{\varepsilon })\leq M$ for some constant $M$. The proof of
this tightness property relies on the fact that the estimate $L_{T}(\varphi
^{\varepsilon })\leq M$ implies tightness of $\varphi ^{\varepsilon }$ in a
suitable space. Although in the moderate deviations problem one has the
stronger bound $L_{T}(\varphi ^{\varepsilon })\leq Ma^{2}(\varepsilon )$ on
the cost of controls, the mere tightness of $\varphi ^{\varepsilon }$ does
not imply the tightness of $\bar{Y}^{\varepsilon ,\varphi ^{\varepsilon }}$.
Instead one needs to study tightness properties of $\psi ^{\varepsilon
}=(\varphi ^{\varepsilon }-1)/a(\varepsilon )$. In general $\psi
^{\varepsilon }$ may not be in $L^{2}(\nu _{T})$ and one of the challenges
is to identify a space where suitable tightness properties of the centered
and normalized controls $\{\psi ^{\varepsilon }\}$ can be established. The
key idea is to split $\psi ^{\varepsilon }$ into two terms, one of which
lies in a closed ball in $L^{2}(\nu _{T})$ (independent of $\varepsilon $)
and the other approaches $0$ in a suitable manner. Estimates on each of the
two terms (see Lemma \ref{sineqs2}) are key ingredients in the proof and are
used many times in this work, in particular to obtain uniform in $%
\varepsilon $ moment estimates on centered and scaled processes of the form (%
\ref{eq:centered}).

The rest of the paper is organized as follows. Section \ref{Sec:PRM}
contains some background on PRMs and the variational representation from 
\cite{BDM09}. In Section \ref{Sec:thm} we present a general moderate
deviation principle for measurable functionals of a PRM. Although this
result concerns a large deviations principle with a certain speed, we refer
to it as a MDP since its typical application is to the proof of moderate
deviation type results. This general result covers many stochastic dynamical
system models in finite and infinite dimensions. Indeed, by using the
general theorem from Section \ref{Sec:thm}, a moderate deviation principle
for finite dimensional SDE driven by PRM is established in Section \ref%
{sec_MDP} and an infinite-dimensional model is considered in Section \ref%
{sec:sec2.4}. Sections \ref{sec:proofthm1} to \ref{sec:sec748} are devoted
to proofs. The result for the infinite dimensional setting requires many
assumptions on the model. In Section \ref{sec:secexample} we show that these
assumptions are satisfied for an SPDE that has been proposed as a model for
the spread of a pollutant with Poissonian point sources in a waterway.

\medskip \noindent \textbf{Notation:} \noindent

The following notation is used. For a topological space $\mathcal{E}$,
denote the corresponding Borel $\sigma $-field by $\mathcal{B}(\mathcal{E})$%
. We use the symbol \textquotedblleft $\Rightarrow $\textquotedblright\ to
denote convergence in distribution. For a Polish space $\mathbb{X}$, denote
by $C([0,T]:\mathbb{X})$ and $D([0,T]:\mathbb{X})$ the space of continuous
functions and right continuous functions with left limits from $[0,T]$ to $%
\mathbb{X}$, endowed with the uniform and Skorokhod topology, respectively.
For a metric space $\mathcal{E}$, denote by $M_{b}(\mathcal{E})$ and $C_{b}(%
\mathcal{E})$ the space of real bounded $\mathcal{B}(\mathcal{E})/\mathcal{B}%
(\mathbb{R})$-measurable functions and real bounded and continuous functions
respectively. For Banach spaces $B_{1},B_{2}$, $L(B_{1},B_{2})$ will denote
the space of bounded linear operators from $B_{1}$ to $B_{2}$. For a measure 
$\nu $ on $\mathcal{E}$ and a Hilbert space $H$, let $L^{2}(\mathcal{E},\nu
,H)$ denote the space of measurable functions $f$ from $\mathcal{E}$ to $H$
such that $\int_{\mathcal{E}}\Vert f(v)\Vert ^{2}\nu (dv)<\infty $, where $%
\Vert \cdot \Vert $ is the norm on $H$. When $H=\mathbb{R}$ and $\mathcal{E}$
is clear from the context we write $L^{2}(\nu )$.

For a function $x:[0,T]\rightarrow\mathcal{E}$, we use the notation $x_{t}$
and $x(t)$ interchangeably for the evaluation of $x$ at $t\in\lbrack0,T]$. A
similar convention will be followed for stochastic processes. We say a
collection $\{X^{\varepsilon}\}$ of $\mathcal{E}$-valued random variables is
tight if the distributions of $X^{\varepsilon}$ are tight in $\mathcal{P}(%
\mathcal{E})$ (the space of probability measures on $\mathcal{E}$).

A function $I:\mathcal{E}\rightarrow\lbrack0,\infty]$ is called a rate
function on $\mathcal{E}$ if for each $M<\infty$, the level set $\{x\in 
\mathcal{E}:I(x)\leq M\}$ is a compact subset of $\mathcal{E}$.

Given a collection $\{b(\varepsilon )\}_{\varepsilon >0}$ of positive reals,
a collection $\{X^{\varepsilon }\}_{\varepsilon >0}$ of $\mathcal{E}$-valued
random variables is said to satisfy the Laplace principle upper bound
(respectively, lower bound) on $\mathcal{E}$ with speed $b(\varepsilon )$
and rate function $I$ if for all $h\in C_{b}(\mathcal{E})$ 
\begin{equation*}
\limsup_{\varepsilon \rightarrow 0}b(\varepsilon )\log \mathbb{E}\left\{
\exp \left[ -\frac{1}{b(\varepsilon )}h(X^{\varepsilon })\right] \right\}
\leq -\inf_{x\in \mathcal{E}}\{h(x)+I(x)\},
\end{equation*}%
and, respectively, 
\begin{equation*}
\liminf_{\varepsilon \rightarrow 0}b(\varepsilon )\log \mathbb{E}\left\{
\exp \left[ -\frac{1}{b(\varepsilon )}h(X^{\varepsilon })\right] \right\}
\geq -\inf_{x\in \mathcal{E}}\{h(x)+I(x)\}.
\end{equation*}%
The Laplace principle is said to hold for $\{X^{\varepsilon }\}$ with speed $%
b(\varepsilon )$ and rate function $I$ if both the Laplace upper and lower
bounds hold. It is well known that when $\mathcal{E}$ is a Polish space, the
family $\{X^{\varepsilon }\}$ satisfies the Laplace principle upper
(respectively lower) bound with a rate function $I$ on $\mathcal{E}$ if and
only if $\{X^{\varepsilon }\}$ satisfies the large deviation upper
(respectively lower) bound for all closed sets (respectively open sets) with
the rate function $I$. For a proof of this statement we refer to Section 1.2
of \cite{DE97}.

\section{Preliminaries and Main Results}

\label{prelim}

\subsection{Poisson Random Measure and a Variational Representation}

\label{Sec:PRM}

Let $\mathbb{X}$ be a locally compact Polish space and let $\mathcal{M}_{FC}(%
\mathbb{X})$ be the space of all measures $\nu $ on $(\mathbb{X},\mathcal{B}(%
\mathbb{X}))$ such that $\nu (K)<\infty $ for every compact $K\subset 
\mathbb{X}$. Endow $\mathcal{M}_{FC}(\mathbb{X})$ with the usual vague
topology. This topology can be metrized such that $\mathcal{M}_{FC}(\mathbb{X%
})$ is a Polish space \cite{BDM09}. Fix $T\in (0,\infty )$ and let $\mathbb{X%
}_{T}=\mathbb{X}\times \lbrack 0,T]$. Fix a measure $\nu \in \mathcal{M}%
_{FC}(\mathbb{X})$, and let $\nu _{T}=\nu \otimes \lambda _{T}$, where $%
\lambda _{T}$ is Lebesgue measure on $[0,T]$.

A Poisson random measure $\mathbf{n}$ on $\mathbb{X}_{T}$ with mean measure
(or intensity measure) $\nu_{T}$ is a $\mathcal{M}_{FC}(\mathbb{X}_{T})$%
-valued random variable such that for each $B\in\mathcal{B}(\mathbb{X}_{T})$
with $\nu_{T}(B)<\infty$, $\mathbf{n}(B)$ is Poisson distributed with mean $%
\nu_{T}(B)$ and for disjoint $B_{1},...,B_{k}\in\mathcal{B}(\mathbb{X}_{T})$%
, $\mathbf{n}(B_{1}),...,\mathbf{n}(B_{k})$ are mutually independent random
variables (cf. \cite{Ikeda}). Denote by $\mathbb{P}$ the measure induced by $%
\mathbf{n}$ on $(\mathcal{M}_{FC}(\mathbb{X}_{T}),\mathcal{B}(\mathcal{M}%
_{FC}(\mathbb{X}_{T})))$. Then letting $\mathbb{M}=\mathcal{M}_{FC}(\mathbb{X%
}_{T})$, $\mathbb{P}$ is the unique probability measure on $(\mathbb{M},%
\mathcal{B}(\mathbb{M}))$ under which the canonical map, $N:\mathbb{M}%
\rightarrow\mathbb{M},N(m)\doteq m$, is a Poisson random measure with
intensity measure $\nu_{T}$. Also, for $\theta>0$, $\mathbb{P}_{\theta}$
will denote a probability measure on $(\mathbb{M},\mathcal{B}(\mathbb{M}))$
under which $N$ is a Poisson random measure with intensity $\theta\nu_{T}$ .
The corresponding expectation operators will be denoted by $\mathbb{E}$ and $%
\mathbb{E}_{\theta}$, respectively.

Let $F\in M_{b}(\mathbb{M})$. We now present a variational representation
from \cite{BDM09} for $-\log\mathbb{E}_{\theta}(\exp[-F(N)])$, in terms of a
Poisson random measure constructed on a larger space. Let $\mathbb{Y}=%
\mathbb{X}\times\lbrack0,\infty)$ and $\mathbb{Y}_{T}=\mathbb{Y}\times
\lbrack0,T]$. Let $\bar{\mathbb{M}}=\mathcal{M}_{FC}(\mathbb{Y}_{T})$ and
let $\mathbb{\bar{P}}$ be the unique probability measure on $(\bar{\mathbb{M}%
},\mathcal{B}(\bar{\mathbb{M}}))$ under which the canonical map, $\bar{N}:%
\bar{\mathbb{M}}\rightarrow\bar{\mathbb{M}},\bar{N}(m)\doteq m$, is a
Poisson random measure with intensity measure $\bar{\nu}_{T}=\nu\otimes%
\lambda _{\infty}\otimes\lambda_{T}$, where $\lambda_{\infty}$ is Lebesgue
measure on $[0,\infty)$. The corresponding expectation operator will be
denoted by $\mathbb{\bar{E}}$. Let $\mathcal{F}_{t}\doteq\sigma\{\bar{N}%
((0,s]\times A):0\leq s\leq t,A\in\mathcal{B}(\mathbb{Y})\}$ be the $\sigma$%
-algebra generated by $\bar{N}$, and let $\bar{\mathcal{F}}_{t}$ denote the
completion under $\mathbb{\bar{P}}$. We denote by $\bar{\mathcal{P}}$ the
predictable $\sigma$-field on $[0,T]\times\bar{\mathbb{M}}$ with the
filtration $\{\bar{\mathcal{F}}_{t}:0\leq t\leq T\}$ on $(\bar{\mathbb{M}},%
\mathcal{B}(\bar{\mathbb{M}}))$. Let $\bar{\mathcal{A}}_{+}$ [resp. $\bar{%
\mathcal{A}}$] be the class of all $(\mathcal{B}(\mathbb{X})\otimes\bar{%
\mathcal{P}})/\mathcal{B}[0,\infty)$ [resp. $(\mathcal{B}(\mathbb{X})\otimes 
\bar{\mathcal{P}})/\mathcal{B}(\mathbb{R})$]-measurable maps from $\mathbb{X}%
_{T}\times\bar{\mathbb{M}}$ to $[0,\infty)$ [resp. $\mathbb{R}$]. For $%
\varphi\in\bar{\mathcal{A}}_{+}$, define a counting process $N^{\varphi}$ on 
$\mathbb{X}_{T}$ by 
\begin{equation}
N^{\varphi}((0,t]\times U)=\int_{U\times\lbrack0,\infty)\times [
0,t]}1_{[0,\varphi(x,s)]}(r)\bar{N}(dx\,dr\,ds),\quad t\in\lbrack0,T],U\in 
\mathcal{B}(\mathbb{X}).  \label{Eqn: control}
\end{equation}
We think of $N^{\varphi}$ as a controlled random measure, with $\varphi$
selecting the intensity for the points at location $x$ and time $s$, in a
possibly random but non-anticipating way. When $\varphi(x,s,\bar{m}%
)\equiv\theta\in(0,\infty)$, we write $N^{\varphi}=N^{\theta}$. Note that $%
N^{\theta}$ has the same distribution with respect to $\mathbb{\bar{P}}$ as $%
N$ has with respect to $\mathbb{P}_{\theta}$.

Define $\ell :[0,\infty )\rightarrow \lbrack 0,\infty )$ by 
\begin{equation}
\ell (r)=r\log r-r+1,\quad r\in \lbrack 0,\infty ).  \label{eq:eq121}
\end{equation}%
For any $\varphi \in \bar{\mathcal{A}}_{+}$ and $t\in \lbrack 0,T]$ the
quantity 
\begin{equation}
L_{t}(\varphi )=\int_{\mathbb{X}\times \lbrack 0,t]}\ell (\varphi
(x,s,\omega ))\nu _{T}(dx\,ds)  \label{Ltdef}
\end{equation}%
is well defined as a $[0,\infty ]$-valued random variable. Let $\left\{
K_{n}\subset \mathbb{X},n=1,2,\ldots \right\} $ be an increasing sequence of
compact sets such that $\cup _{n=1}^{\infty }K_{n}=\mathbb{X}$. For each $n$
let 
\begin{align*}
\bar{\mathcal{A}}_{b,n}& \doteq \left\{ \varphi \in \bar{\mathcal{A}}_{+}:%
\text{ for all }(t,\omega )\in \lbrack 0,T]\times \mathbb{\bar{M}}\text{, }%
n\geq \varphi (x,t,\omega )\geq 1/n\right. \\
& \hspace{1in}\left. \text{ if }x\in K_{n}\text{ and }\varphi (x,t,\omega )=1%
\text{ if }x\in K_{n}^{c}\right\}
\end{align*}%
and let $\bar{\mathcal{A}}_{b}=\cup _{n=1}^{\infty }\bar{\mathcal{A}}_{b,n}$.

The following is a representation formula proved in \cite{BDM09}. For the
second equality in the theorem see the proof of Theorem 2.4 in \cite{BCD}.

\begin{theorem}
\label{VR: PRM} Let $F\in M_{b}(\mathbb{M})$. Then for $\theta>0$%
\begin{align*}
-\log\mathbb{E}_{\theta}(e^{-F(N)})=-\log\mathbb{\bar{E}}(e^{-
F(N^{\theta})})= & \inf_{\varphi\in\bar{\mathcal{A}}_{+}}\mathbb{\bar{E}}%
\left[ \theta L_{T}(\varphi)+F(N^{\theta\varphi})\right] \\
= & \inf_{\varphi\in\bar{\mathcal{A}}_{b}}\mathbb{\bar{E}}\left[ \theta
L_{T}(\varphi)+F(N^{\theta\varphi})\right] .
\end{align*}
\end{theorem}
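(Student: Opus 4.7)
The first equality, $-\log\mathbb{E}_\theta(e^{-F(N)}) = -\log\mathbb{\bar{E}}(e^{-F(N^\theta)})$, is immediate from the thinning property of Poisson random measures: taking $\varphi \equiv \theta$ in \eqref{Eqn: control} shows that $N^\theta$ is the projection onto $\mathbb{X}_T$ of $\bar{N}$ restricted to the slab $\mathbb{X}\times[0,\theta]\times[0,T]$, which under $\mathbb{\bar{P}}$ is a PRM of intensity $\theta\nu_T$, matching the law of $N$ under $\mathbb{P}_\theta$. My plan for the two remaining equalities is to combine the Donsker--Varadhan dual formula for relative entropy with a Girsanov change of measure on $(\bar{\mathbb{M}},\mathbb{\bar{P}})$, and then handle the reduction to $\bar{\mathcal{A}}_b$ by a truncation argument.

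For the main variational identity, recall the dual formula: for any probability $\mu$ and bounded measurable $h$,
\[
-\log\int e^{-h}\,d\mu \;=\; \inf_{\eta\ll\mu}\Bigl\{\int h\,d\eta + R(\eta\|\mu)\Bigr\},
\]
attained at $d\eta^{\ast}/d\mu\propto e^{-h}$. Apply this with $\mu=\mathbb{\bar{P}}$ and $h = F(N^\theta)$. For the inequality $-\log\mathbb{\bar{E}}(e^{-F(N^\theta)}) \le \mathbb{\bar{E}}[\theta L_T(\varphi)+F(N^{\theta\varphi})]$, I would fix $\varphi\in\bar{\mathcal{A}}_+$ of finite cost and tilt the $\mathbb{\bar{P}}$-intensity of $\bar{N}$ on the slab $\{r\le\theta\}$ from $\bar{\nu}_T$ to $\varphi(x,s)\bar{\nu}_T$ via the standard exponential martingale from Girsanov's theorem for PRMs. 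Under the resulting measure $\mathbb{Q}^\varphi$ the thinned process $N^{\theta\varphi}$ has the $\mathbb{\bar{P}}$-law of $N^\theta$, and a direct compensator computation identifies the relative entropy $R(\mathbb{Q}^\varphi\|\mathbb{\bar{P}}) = \theta\,\mathbb{\bar{E}}[L_T(\varphi)]$, with $\ell(r)=r\log r - r + 1$ of \eqref{eq:eq121} arising naturally as the integrand. The dual formula then yields the desired bound.

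For the matching lower bound I would start from the minimizer $\mathbb{Q}^{\ast} \propto e^{-F(N^\theta)}\mathbb{\bar{P}}$ and extract its $\bar{\mathcal{P}}$-predictable intensity with respect to $\bar{N}$, which by the standard characterization of absolutely continuous changes of measure for PRMs can be written as $\theta\varphi^{\ast}(x,s)\nu_T$ on the slab $\{r\le\theta\}$ for a suitable $\varphi^{\ast}\in\bar{\mathcal{A}}_+$; the chain rule for relative entropy then gives $R(\mathbb{Q}^{\ast}\|\mathbb{\bar{P}}) = \theta\,\mathbb{\bar{E}}[L_T(\varphi^{\ast})]$, so this $\varphi^{\ast}$ attains the infimum. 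The third equality is obtained by truncation: given $\varphi\in\bar{\mathcal{A}}_+$ with finite cost, set $\varphi_n(x,s) = (\varphi(x,s)\wedge n)\vee(1/n)$ on $K_n\times[0,T]$ and $\varphi_n\equiv 1$ on $K_n^c\times[0,T]$, so $\varphi_n\in\bar{\mathcal{A}}_{b,n}$; convexity of $\ell$ together with monotone and dominated convergence give $\mathbb{\bar{E}}[L_T(\varphi_n)]\to\mathbb{\bar{E}}[L_T(\varphi)]$, while $N^{\theta\varphi_n}\Rightarrow N^{\theta\varphi}$ so bounded convergence handles the $F$ term. The main obstacle I anticipate is the Girsanov step: verifying that the tilted density is a genuine martingale rather than merely a local martingale for arbitrary $\varphi\in\bar{\mathcal{A}}_+$ of finite cost is delicate, and the cleanest workaround is to prove the identity first for $\varphi\in\bar{\mathcal{A}}_b$, where boundedness makes the martingale property automatic, and then extend via the very truncation already needed for the third equality---so the two steps are naturally carried out in tandem.
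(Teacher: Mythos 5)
First, note that the paper does not prove Theorem \ref{VR: PRM}: it is quoted from \cite{BDM09}, with the reduction to $\bar{\mathcal{A}}_b$ taken from the proof of Theorem 2.4 in \cite{BCD}. Measured against those cited arguments, your treatment of the first equality is correct and complete, and your strategy for the inequality $-\log\mathbb{\bar{E}}(e^{-F(N^{\theta})})\leq\mathbb{\bar{E}}[\theta L_{T}(\varphi)+F(N^{\theta\varphi})]$ (Girsanov tilt, entropy computation producing $\ell$, truncation to $\bar{\mathcal{A}}_b$) is essentially the one used there; the same Girsanov construction appears in Section \ref{sect:pfs_finite_dim} of this paper. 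One small repair in the truncation step: $F$ is only bounded and \emph{measurable}, so weak convergence $N^{\theta\varphi_{n}}\Rightarrow N^{\theta\varphi}$ does not control $\mathbb{\bar{E}}[F(N^{\theta\varphi_{n}})]$. What one actually shows is $\mathbb{\bar{P}}(N^{\theta\varphi_{n}}\neq N^{\theta\varphi})\to 0$, by bounding the expected number of points of $\bar{N}$ in the symmetric difference of the slabs by $\theta\,\mathbb{\bar{E}}\int|\varphi_{n}-\varphi|\,d\nu_{T}$ and using the finite-cost bound (via estimates of the type in Lemma \ref{sineqs2}(a),(c)) to see this vanishes.

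The genuine gap is in your lower bound. Starting from $\mathbb{Q}^{\ast}\propto e^{-F(N^{\theta})}\mathbb{\bar{P}}$, Jacod's characterization gives a $\mathbb{Q}^{\ast}$-compensator of $\bar{N}$ of the form $Y(x,r,s,\omega)\,\bar{\nu}_{T}(dx\,dr\,ds)$ with $Y$ predictable but in general genuinely depending on the auxiliary coordinate $r$; it is not automatic that on the slab $\{r\leq\theta\}$ this collapses to $\theta\varphi^{\ast}(x,s,\omega)\nu_{T}$ with $\varphi^{\ast}\in\bar{\mathcal{A}}_{+}$. More seriously, even given such a $\varphi^{\ast}$, the identification $\int F(N^{\theta})\,d\mathbb{Q}^{\ast}=\mathbb{\bar{E}}[F(N^{\theta\varphi^{\ast}})]$ requires that the $\mathbb{\bar{P}}$-law of the thinned process $N^{\theta\varphi^{\ast}}$ coincide with the $\mathbb{Q}^{\ast}$-law of $N^{\theta}$. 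Since $\varphi^{\ast}$ is a functional of the richer process $\bar{N}$ and not of $N^{\theta}$ alone, equality of compensators relative to $\{\bar{\mathcal{F}}_{t}\}$ does not determine the law of the point process, so this step does not follow from ``the standard characterization of absolutely continuous changes of measure.'' There is also a measure mismatch in the cost: the entropy chain rule yields $R(\mathbb{Q}^{\ast}\|\mathbb{\bar{P}})=\theta\,\mathbb{E}_{\mathbb{Q}^{\ast}}[L_{T}(\varphi^{\ast})]$, an expectation under $\mathbb{Q}^{\ast}$, whereas the representation requires the cost $\theta\,\mathbb{\bar{E}}[L_{T}(\varphi^{\ast})]$ under $\mathbb{\bar{P}}$; reconciling the two is not cosmetic. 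These identification problems are precisely why \cite{BDM09} does not argue by reading off the optimizer, but instead proves the lower bound by reducing to functionals of $N$ depending on its restriction to compact sets and finite time grids, where an explicit finite-dimensional representation is available, and then passing to the limit.
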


\subsection{A General Moderate Deviation Result}

\label{Sec:thm} For $\varepsilon>0$, let $\mathcal{G}^{\varepsilon}$ be a
measurable map from $\mathbb{M}$ to $\mathbb{U}$, where $\mathbb{U}$ is some
Polish space. Let $a:\mathbb{R}_{+}\rightarrow\mathbb{R}_{+}$ be such that
as $\varepsilon\rightarrow0$ 
\begin{equation}
a(\varepsilon)\rightarrow0\mbox{ and }b(\varepsilon)=\frac{\varepsilon}{%
a^{2}(\varepsilon)}\rightarrow0.  \label{eq:eq1138}
\end{equation}
In this section we will formulate a general sufficient condition for the
collection $\mathcal{G}^{\varepsilon}(\varepsilon N^{\varepsilon^{-1}})$ to
satisfy a large deviation principle with speed $b(\varepsilon)$ and a rate
function that is given through a suitable quadratic form.

For $\varepsilon>0$ and $M<\infty$, consider the spaces 
\begin{align}
\mathcal{S}_{+,\varepsilon}^{M} & \doteq\{\varphi:\mathbb{X}\times
\lbrack0,T]\rightarrow\mathbb{R}_{+}|\ L_{T}(\varphi)\leq
Ma^{2}(\varepsilon)\}  \label{ctrlclass} \\
\mathcal{S}_{\varepsilon}^{M} & \doteq\{\psi:\mathbb{X}\times\lbrack
0,T]\rightarrow\mathbb{R}|\ \psi=(\varphi-1)/a(\varepsilon),\varphi \in%
\mathcal{S}_{+,\varepsilon}^{M}\}.  \notag
\end{align}
We also let 
\begin{align}
{\mathcal{U}}_{+,\varepsilon}^{M} & \doteq\left\{ \phi\in\bar{\mathcal{A}}%
_{b}:\phi(\cdot,\cdot,\omega)\in\mathcal{S}_{+,\varepsilon}^{M},\text{ }%
\mathbb{\bar{P}}\text{-a.s.}\right\}  \label{eq:def_of_Us} \\
{\mathcal{U}}_{\varepsilon}^{M} & \doteq\left\{ \phi\in\bar{\mathcal{A}}%
:\phi(\cdot,\cdot,\omega)\in\mathcal{S}_{\varepsilon}^{M},\text{ }\mathbb{%
\bar{P}}\text{-a.s.}\right\} .  \notag
\end{align}

The norm in the Hilbert space $L^{2}(\nu_{T})$ will be denoted by $\|
\cdot\|_{2}$ and $B_{2}(r)$ denotes the ball of radius $r$ in $%
L^{2}(\nu_{T}) $. Given a map $\mathcal{G}_{0}:L^{2}(\nu_{T})\rightarrow%
\mathbb{U}$ and $\eta\in\mathbb{U}$, let 
\begin{equation*}
\mathbb{S}_{\eta}\equiv\mathbb{S}_{\eta}[\mathcal{G}_{0}]=\{\psi\in
L^{2}(\nu_{T}):\eta=\mathcal{G}_{0}(\psi)\}
\end{equation*}
and define $I$ by 
\begin{equation}
I(\eta)=\inf_{\psi\in\mathbb{S}_{\eta}}\left[ \frac{1}{2}\|\psi\|_{2}^{2}%
\right] .  \label{eq:first_rate}
\end{equation}
Here we follow the convention that the infimum over an empty set is $+\infty$%
.

We now introduce a sufficient condition that ensures that $I$ is a rate
function and the collection $Y^{\varepsilon}\equiv\mathcal{G}^{\varepsilon
}(\varepsilon N^{\varepsilon^{-1}})$ satisfies a LDP with speed $%
b(\varepsilon )$ and rate function $I$. A set $\{\psi^{\varepsilon}\}\subset%
\bar {\mathcal{A}}$ with the property that $\sup_{\varepsilon>0}\|\psi
^{\varepsilon}\|_{2}\leq M$ a.s. for some $M<\infty$ will be regarded as a
collection of $B_{2}(M)$-valued random variables, where $B_{2}(M)$ is
equipped with the weak topology on the Hilbert space $L^{2}(\nu_{T})$. Since 
$B_{2}(M)$ is weakly compact, such a collection of random variables is
automatically tight. Throughout this paper $B_{2}(M)$ will be regarded as
the compact metric space obtained by equipping it with the weak topology on $%
L^{2}(\nu_{T})$.

Suppose $\varphi \in \mathcal{S}_{+,\varepsilon }^{M}$, which we recall
implies $L_{T}(\varphi )\leq Ma(\varepsilon )^{2}$. Then as shown in Lemma %
\ref{sineqs2} below, there exists $\kappa _{2}(1)\in (0,\infty )$ that is
independent of $\varepsilon $ and such that $\psi 1_{\{|\psi
|<1/a(\varepsilon )\}}\in B_{2}((M\kappa _{2}(1))^{1/2})$, where $\psi
=(\varphi -1)/a(\varepsilon )$.

\begin{condition}
\label{gencond} For some measurable map $\mathcal{G}_{0}:L^{2}(\nu
_{T})\rightarrow\mathbb{U}$, the following two conditions hold.\label{cond1}

\begin{enumerate}
\item[(a)] Given $M\in (0,\infty )$, suppose that $g^{\varepsilon },g\in
B_{2}(M)$ and $g^{\varepsilon }\rightarrow g$. Then 
\begin{equation*}
\mathcal{G}_{0}(g^{\varepsilon })\rightarrow \mathcal{G}_{0}(g).
\end{equation*}

\item[(b)] Given $M\in (0,\infty )$, let $\{\varphi ^{\varepsilon
}\}_{\varepsilon >0}$ be such that for every $\varepsilon >0$, $\varphi
^{\varepsilon }\in \mathcal{U}_{+,\varepsilon }^{M}$ and for some $\beta \in
(0,1]$, $\psi ^{\varepsilon }1_{\{|\psi ^{\varepsilon }|\leq \beta
/a(\varepsilon )\}}\Rightarrow \psi $ in $B_{2}((M\kappa _{2}(1))^{1/2})$
where $\psi ^{\varepsilon }=(\varphi ^{\varepsilon }-1)/a(\varepsilon )$.
Then 
\begin{equation*}
\mathcal{G}^{\varepsilon }(\varepsilon N^{\varepsilon ^{-1}\varphi
^{\varepsilon }})\Rightarrow \mathcal{G}_{0}(\psi ).
\end{equation*}%
\label{cond2}
\end{enumerate}
\end{condition}

\begin{theorem}
\label{LDPpoi} Suppose that the functions $\mathcal{G}^{\varepsilon }$ and $%
\mathcal{G}_{0}$ satisfy Condition \ref{gencond}. Then $I$ defined by (\ref%
{eq:first_rate}) is a rate function and $\{Y^{\varepsilon }\equiv \mathcal{G}%
^{\varepsilon }(\varepsilon N^{\varepsilon ^{-1}})\}$ satisfies a large
deviation principle with speed $b(\varepsilon )$ and rate function $I$.
\end{theorem}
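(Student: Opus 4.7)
The plan is to apply Theorem \ref{VR: PRM} with $\theta = \varepsilon^{-1}$ and $F(\mu) = h(\mathcal{G}^\varepsilon(\varepsilon\mu))/b(\varepsilon)$ for each $h \in C_b(\mathbb{U})$. Since $b(\varepsilon)/\varepsilon = 1/a^2(\varepsilon)$, this produces the identity
$$
-b(\varepsilon) \log \bar{\mathbb{E}}\exp\bigl[-h(Y^\varepsilon)/b(\varepsilon)\bigr] \;=\; \inf_{\varphi \in \bar{\mathcal{A}}_b} \bar{\mathbb{E}}\Bigl[a^{-2}(\varepsilon) L_T(\varphi) + h\bigl(\mathcal{G}^\varepsilon(\varepsilon N^{\varepsilon^{-1}\varphi})\bigr)\Bigr],
$$
so the Laplace principle reduces to matching asymptotic upper and lower bounds on this infimum by $\inf_\eta\{h(\eta)+I(\eta)\}$, together with the verification that $I$ is a rate function. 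The latter follows directly from Condition \ref{gencond}(a): given $\eta_n$ with $I(\eta_n)\leq M$, pick $\psi_n \in \mathbb{S}_{\eta_n}$ with $\|\psi_n\|_2^2 \leq 2M + 1/n$, pass to a weak limit $\psi$ in the weakly compact metrizable ball $B_2(\sqrt{2M+1})$, use (a) to get $\eta_{n_k} = \mathcal{G}_0(\psi_{n_k}) \to \mathcal{G}_0(\psi) =: \eta$, and conclude $I(\eta)\leq \tfrac{1}{2}\|\psi\|_2^2 \leq M$ by lower semicontinuity of the norm.

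For the Laplace upper bound, fix $\delta > 0$ and choose $\varphi^\varepsilon \in \bar{\mathcal{A}}_b$ within $\delta$ of the infimum; since $h$ is bounded, we may assume $\varphi^\varepsilon \in {\mathcal{U}}^M_{+,\varepsilon}$ with $M$ independent of $\varepsilon$. Set $\psi^\varepsilon = (\varphi^\varepsilon - 1)/a(\varepsilon)$ and decompose $\psi^\varepsilon = \psi_1^\varepsilon + \psi_2^\varepsilon$ with $\psi_1^\varepsilon = \psi^\varepsilon 1_{\{|\psi^\varepsilon| \leq 1/a(\varepsilon)\}}$. Lemma \ref{sineqs2} places $\psi_1^\varepsilon$ in the weakly compact metrizable ball $B_2((M\kappa_2(1))^{1/2})$ and supplies a pointwise lower bound of the form $\ell(\varphi^\varepsilon)/a^2(\varepsilon) \geq \tfrac{1}{2}(\psi_1^\varepsilon)^2(1-o(1)) + \text{(nonnegative tail)}$. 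Tightness of $\{\psi_1^\varepsilon\}$ in $B_2$ is automatic, so along a subsequence $\psi_1^\varepsilon \Rightarrow \psi$; Condition \ref{gencond}(b) then gives $\mathcal{G}^\varepsilon(\varepsilon N^{\varepsilon^{-1}\varphi^\varepsilon}) \Rightarrow \mathcal{G}_0(\psi)$. Fatou, bounded convergence for the $h$ term, and weak lower semicontinuity of $\|\cdot\|_2^2$ combine to yield $\liminf_\varepsilon \bar{\mathbb{E}}[\,\cdot\,] \geq \bar{\mathbb{E}}[h(\mathcal{G}_0(\psi)) + \tfrac{1}{2}\|\psi\|_2^2] \geq \inf\{h+I\}$, as required.

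For the lower bound, select $\eta^*, \psi^*$ with $\mathcal{G}_0(\psi^*) = \eta^*$ and $h(\eta^*) + \tfrac{1}{2}\|\psi^*\|_2^2 \leq \inf\{h+I\} + \delta$, and approximate $\psi^*$ in $L^2(\nu_T)$ by a bounded, compactly supported $\tilde\psi$. For sufficiently small $\varepsilon$, the deterministic control $\varphi^\varepsilon := 1 + a(\varepsilon)\tilde\psi$ lies in $\bar{\mathcal{A}}_b$; a second-order Taylor expansion of $\ell$ at $1$, together with the uniform boundedness of $\tilde\psi$, gives $L_T(\varphi^\varepsilon)/a^2(\varepsilon) \to \tfrac{1}{2}\|\tilde\psi\|_2^2$, while $(\varphi^\varepsilon - 1)/a(\varepsilon) \equiv \tilde\psi$, so Condition \ref{gencond}(b) delivers $\mathcal{G}^\varepsilon(\varepsilon N^{\varepsilon^{-1}\varphi^\varepsilon}) \Rightarrow \mathcal{G}_0(\tilde\psi)$. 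Letting $\tilde\psi \to \psi^*$ in $L^2(\nu_T)$ and using Condition \ref{gencond}(a) plus continuity of $h$ closes the matching bound. The main technical obstacle lies in the upper bound: because $\psi^\varepsilon$ need not belong to $L^2(\nu_T)$, direct weak compactness arguments fail, and Lemma \ref{sineqs2} — the splitting lemma emphasized in the introduction — is indispensable, simultaneously penning $\psi_1^\varepsilon$ inside a fixed $L^2$ ball and controlling the tail $\psi_2^\varepsilon$ so that it contributes only a nonnegative and asymptotically negligible amount to the cost.
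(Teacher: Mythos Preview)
Your proposal follows the same weak-convergence/variational-representation route as the paper, and the rate-function argument, the construction of the explicit control $1 + a(\varepsilon)\tilde\psi$ for the Laplace lower bound, and the overall structure are correct, if compressed. There is, however, one genuine gap in your Laplace upper bound.

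You truncate at level $1/a(\varepsilon)$ (i.e.\ $\beta = 1$) and assert that
\[
\ell(\varphi^\varepsilon)/a^2(\varepsilon) \;\geq\; \tfrac{1}{2}(\psi_1^\varepsilon)^2\,(1-o(1)) \;+\; \text{(nonnegative tail)}.
\]
This is not true. Lemma \ref{sineqs}(d) gives only $\ell(\varphi^\varepsilon)/a^2(\varepsilon) \geq \tfrac{1}{2}(\psi^\varepsilon)^2 - \kappa_3\, a(\varepsilon)\,|\psi^\varepsilon|^3$, and on $\{|\psi^\varepsilon| \leq 1/a(\varepsilon)\}$ the cubic term obeys only $a(\varepsilon)|\psi_1^\varepsilon|^3 \leq |\psi_1^\varepsilon|^2$. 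After integrating and using Lemma \ref{sineqs2}(c), the error is bounded by $\kappa_3 M\kappa_2(1)$, a \emph{fixed} constant that does not vanish as $\varepsilon \to 0$. The paper repairs this by truncating at level $\beta/a(\varepsilon)$ for an arbitrary $\beta \in (0,1]$; then $a(\varepsilon)|\psi^\varepsilon|^3 1_{\{|\psi^\varepsilon|\leq \beta/a(\varepsilon)\}} \leq \beta (\psi^\varepsilon)^2 1_{\{\cdots\}}$, the integrated error is at most $\tfrac{1}{2}\beta\kappa_3 M\kappa_2(1)$, and one sends $\beta \downarrow 0$ \emph{after} $\varepsilon \to 0$. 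This is precisely why Condition \ref{gencond}(b) is phrased with a free parameter $\beta \in (0,1]$ rather than with $\beta = 1$.

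A secondary point: the clause ``since $h$ is bounded, we may assume $\varphi^\varepsilon \in \mathcal{U}^M_{+,\varepsilon}$'' hides a real argument. Near-optimal controls satisfy only the \emph{expectation} bound $\bar{\mathbb{E}}[L_T(\tilde\varphi^\varepsilon)/a^2(\varepsilon)] \leq 2\|h\|_\infty + 1$, whereas Condition \ref{gencond}(b) requires an almost-sure bound. The paper localizes via the stopping time $\tau^\varepsilon = \inf\{t: L_t(\tilde\varphi^\varepsilon)/a^2(\varepsilon) > M\}\wedge T$, replaces $\tilde\varphi^\varepsilon$ by $1$ after $\tau^\varepsilon$, and uses Markov's inequality to show the replacement costs at most $\delta$ in the $h$ term. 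You should make this step explicit.
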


In the next two sections we will present two applications. The first is to a
general family of finite dimensional SDE driven by Poisson noise and the
second is to certain SPDE models with Poisson noise.

\subsection{Finite Dimensional SDEs}

\label{sec_MDP} In this section we study SDEs of the form 
\begin{equation}
X^{\varepsilon}(t)=x_{0}+\int_{0}^{t}b(X^{\varepsilon}(s))ds+\int _{\mathbb{X%
}\times\lbrack0,t]}\varepsilon
G(X^{\varepsilon}(s-),y)N^{\varepsilon^{-1}}(dy,ds),  \label{eq:eq949}
\end{equation}
where the coefficients $b$ and $G$ satisfy the following condition.

\begin{condition}
\label{cndns} The functions $b:\mathbb{R}^{d}\rightarrow\mathbb{R}^{d}$ and $%
G:\mathbb{R}^{d}\times\mathbb{X}\rightarrow\mathbb{R}^{d}$ are measurable
and satisfy

\begin{enumerate}
\item[(a)] for some $L_{b}\in(0,\infty)$ 
\begin{equation*}
|b(x)-b(x^{\prime})|\leq L_{b}|x-x^{\prime}|,\ x,x^{\prime}\in\mathbb{R}^{d},
\end{equation*}

\item[(b)] for some $L_{G}\in L^{1}(\nu)\cap L^{2}(\nu)$ 
\begin{equation*}
|G(x,y)-G(x^{\prime},y)|\leq L_{G}(y)|x-x^{\prime}|,\ x,x^{\prime}\in\mathbb{%
R}^{d},\ y\in\mathbb{X},
\end{equation*}

\item[(c)] for some $M_{G}\in L^{1}(\nu)\cap L^{2}(\nu)$ 
\begin{equation*}
|G(x,y)|\leq M_{G}(y)(1+|x|),\ x\in\mathbb{R}^{d},\ y\in\mathbb{X}.
\end{equation*}
\end{enumerate}
\end{condition}

The following result follows by standard arguments (see Theorem IV.9.1 of 
\cite{Ikeda}).

\begin{theorem}
\label{pathuniq} Fix $x_{0}\in\mathbb{R}^{d}$, and assume Condition \ref%
{cndns}. The following conclusions hold.

\begin{enumerate}
\item[(a)] For each $\varepsilon >0$ there is a measurable map $\bar{%
\mathcal{G}}^{\varepsilon }:\mathbb{M}\rightarrow D([0,T]:\mathbb{R}^{d})$
such that for any probability space $(\tilde{\Omega},\tilde{\mathcal{F}},%
\tilde{P})$ on which is given a Poisson random measure ${\mathbf{n}}%
_{\varepsilon }$ on $\mathbb{X}_{T}$ with intensity measure $\varepsilon
^{-1}\nu _{T}$, $\tilde{X}^{\varepsilon }=\bar{\mathcal{G}}^{\varepsilon
}(\varepsilon {\mathbf{n}}_{\varepsilon })$ is a $\tilde{\mathcal{F}}%
_{t}=\sigma \{{\mathbf{n}}_{\varepsilon }(B\times \lbrack 0,s]),s\leq t,B\in 
\mathcal{B}(\mathbb{X})\}$ adapted process that is the unique solution of
the stochastic integral equation 
\begin{equation}
\tilde{X}^{\varepsilon }(t)=x_{0}+\int_{0}^{t}b(\tilde{X}^{\varepsilon
}(s))ds+\int_{\mathbb{X}\times \lbrack 0,t]}\varepsilon G(\tilde{X}%
^{\varepsilon }(s-),y){\mathbf{n}}_{\varepsilon }(dy,ds),\,t\in \lbrack 0,T].
\label{eq:eq837}
\end{equation}%
In particular $X^{\varepsilon }=\bar{\mathcal{G}}^{\varepsilon }(\varepsilon
N^{\varepsilon ^{-1}})$ is the unique solution of \eqref{eq:eq949}.

\item[(b)] \noindent There is a unique $X^{0}$ in $C([0,T]:\mathbb{R}^{d})$
that solves the equation 
\begin{equation}
X^{0}(t)=x_{0}+\int_{0}^{t}b(X^{0}(s))ds+\int_{\mathbb{X}\times\lbrack
0,t]}G(X^{0}(s),y)\nu(dy)ds,\,t\in\lbrack0,T].  \label{eq:eq840}
\end{equation}
\end{enumerate}
\end{theorem}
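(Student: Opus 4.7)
The plan is to follow a standard Picard iteration combined with a Yamada--Watanabe type principle for SDEs driven by Poisson random measures, as suggested by the reference to Theorem IV.9.1 of \cite{Ikeda}.

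For part (a), fix $\varepsilon>0$ and a filtered probability space $(\tilde{\Omega},\tilde{\mathcal{F}},\{\tilde{\mathcal{F}}_{t}\},\tilde{P})$ carrying a Poisson random measure $\mathbf{n}_{\varepsilon}$ with intensity $\varepsilon^{-1}\nu_{T}$. Set $X_{0}^{\varepsilon}(t)\equiv x_{0}$ and iteratively define
\begin{equation*}
X_{n+1}^{\varepsilon}(t)=x_{0}+\int_{0}^{t}b(X_{n}^{\varepsilon}(s))\,ds+\varepsilon\int_{\mathbb{X}\times[0,t]}G(X_{n}^{\varepsilon}(s-),y)\,\mathbf{n}_{\varepsilon}(dy,ds).
\end{equation*}
Decomposing $\mathbf{n}_{\varepsilon}$ into its compensated martingale part and its compensator $\varepsilon^{-1}\nu_{T}$, and combining the Burkholder--Davis--Gundy inequality with the linear growth bound $|G(x,y)|\leq M_{G}(y)(1+|x|)$ (where $M_{G}\in L^{1}(\nu)\cap L^{2}(\nu)$), the Lipschitz continuity of $b$, and Gronwall's inequality, yields the uniform estimate $\sup_{n}\tilde{\mathbb{E}}\sup_{t\leq T}|X_{n}^{\varepsilon}(t)|^{2}<\infty$. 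Substituting growth estimates by the corresponding Lipschitz estimates (using $L_{G}\in L^{1}(\nu)\cap L^{2}(\nu)$) gives
\begin{equation*}
\tilde{\mathbb{E}}\sup_{s\leq t}|X_{n+1}^{\varepsilon}(s)-X_{n}^{\varepsilon}(s)|^{2}\leq C\int_{0}^{t}\tilde{\mathbb{E}}\sup_{u\leq s}|X_{n}^{\varepsilon}(u)-X_{n-1}^{\varepsilon}(u)|^{2}\,ds,
\end{equation*}
for some $C=C(\varepsilon,T,L_{b},L_{G})$. Iteration shows that $\{X_{n}^{\varepsilon}\}$ is Cauchy in $L^{2}(\tilde{P};D([0,T]:\mathbb{R}^{d}))$ equipped with the uniform topology, hence converges to a limit $\tilde{X}^{\varepsilon}$ solving \eqref{eq:eq837}. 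Pathwise uniqueness follows from the same Gronwall estimate applied to the difference of two solutions.

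To produce the measurable functional $\bar{\mathcal{G}}^{\varepsilon}$, I would invoke a Yamada--Watanabe type principle: executing the Picard construction on the canonical space $(\mathbb{M},\mathcal{B}(\mathbb{M}),\mathbb{P}_{\varepsilon^{-1}})$, on which $N$ has intensity $\varepsilon^{-1}\nu_{T}$, produces a weak solution, and pathwise uniqueness forces this solution to be a measurable functional of the driving PRM. Setting $\bar{\mathcal{G}}^{\varepsilon}(m)$ to be the value of this functional evaluated at $m/\varepsilon$ furnishes the required map from $\mathbb{M}$ to $D([0,T]:\mathbb{R}^{d})$, and the identity $\tilde{X}^{\varepsilon}=\bar{\mathcal{G}}^{\varepsilon}(\varepsilon\mathbf{n}_{\varepsilon})$ on an arbitrary probability space carrying a PRM of intensity $\varepsilon^{-1}\nu_{T}$ then follows from the standard transfer argument.

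For part (b), set $\tilde{b}(x):=b(x)+\int_{\mathbb{X}}G(x,y)\,\nu(dy)$. Conditions (a)--(b) with $L_{G}\in L^{1}(\nu)$ give
\begin{equation*}
|\tilde{b}(x)-\tilde{b}(x')|\leq\bigl(L_{b}+\|L_{G}\|_{L^{1}(\nu)}\bigr)|x-x'|,
\end{equation*}
while (c) with $M_{G}\in L^{1}(\nu)$ yields linear growth of $\tilde{b}$. Classical Picard--Lindel\"of theory then delivers a unique $X^{0}\in C([0,T]:\mathbb{R}^{d})$ solving \eqref{eq:eq840}. The only mildly technical point throughout is the combined use of the $L^{1}$ and $L^{2}$ hypotheses on $M_{G}$ and $L_{G}$: the $L^{2}$ part controls the compensated Poisson martingale via BDG, while the $L^{1}$ part controls the compensator and the deterministic integral appearing in (b).
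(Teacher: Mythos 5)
Your proposal is correct and is essentially the argument the paper itself relies on: the paper gives no proof of Theorem \ref{pathuniq}, stating only that it ``follows by standard arguments (see Theorem IV.9.1 of \cite{Ikeda})'', and that standard argument is precisely the Picard iteration with the martingale/compensator splitting (using the $L^{2}(\nu)$ hypotheses for the compensated part and the $L^{1}(\nu)$ hypotheses for the compensator and for part (b)) that you carry out. The only cosmetic remark is that the measurability of $\bar{\mathcal{G}}^{\varepsilon}$ is usually obtained directly from the fact that each Picard iterate is an explicit measurable functional of the driving measure, so the appeal to a Yamada--Watanabe principle, while valid, is not needed.
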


We now state a LDP for $\{Y^{\varepsilon}\}$, where 
\begin{equation}
Y^{\varepsilon}\equiv\frac{1}{a(\varepsilon)}(X^{\varepsilon}-X^{0}),
\label{eq:eq113}
\end{equation}
and $a(\varepsilon)$ is as in \eqref{eq:eq1138}. For this we will need the
following additional condition on the coefficients. Let 
\begin{equation}
m_{T}=\sup_{0\leq t\leq T}|X^{0}(t)|.  \label{eq:eq1024}
\end{equation}
For a differentiable function $f:\mathbb{R}^{d}\rightarrow\mathbb{R}^{d}$
let $Df(x)=\left( \partial f_{i}(x)/\partial x_{j}\right) _{i,j}$, $x\in%
\mathbb{R}^{d}$, and let $|Df|_{op}$ denote the operator norm of the matrix $%
Df$. We define a class of functions by%
\begin{equation}
\mathcal{H}\doteq\left\{ h:\mathbb{X}\rightarrow\mathbb{R}:\exists \
\delta>0,\mbox{ s.t.   }\forall\Gamma\mbox{ with }\nu(\Gamma)<\infty
,\int_{\Gamma}\exp(\delta h^{2}(y))\nu(dy)<\infty\right\} .
\label{eq:defofH}
\end{equation}

\begin{condition}
\label{cndnsnew}

\begin{description}
\item 
\begin{enumerate}
\item[(a)] The functions $L_{G}$ and $M_{G}$ are in the class $\mathcal{H}$.

\item[(b)] For every $y\in \mathbb{X}$, the maps $x\mapsto b(x)$ and $%
x\mapsto G(x,y)$ are differentiable. For some $L_{Db}\in (0,\infty )$ 
\begin{equation*}
|Db(x)-Db(x^{\prime })|_{op}\leq L_{Db}|x-x^{\prime }|,\,x,x^{\prime }\in 
\mathbb{R}^{d}
\end{equation*}%
and for some $L_{DG}\in L^{1}(\nu )$ 
\begin{equation*}
|D_{x}G(x,y)-D_{x}G(x^{\prime },y)|_{op}\leq L_{DG}(y)|x-x^{\prime
}|,\,x,x^{\prime }\in \mathbb{R}^{d},\,y\in \mathbb{X}.
\end{equation*}%
With $m_{T}<\infty $ as in \eqref{eq:eq1024} 
\begin{equation*}
\sup_{\{x\in \mathbb{R}^{d}:|x|\leq m_{T}\}}\int_{\mathbb{X}%
}|D_{x}G(x,y)|_{op}\nu (dy)<\infty .
\end{equation*}
\end{enumerate}
\end{description}
\end{condition}

We note that all locally bounded and measurable real functions on $\mathbb{X}
$ are in $\mathcal{H}$.  

The following result gives a Moderate Deviations Principle for finite dimensional SDE.
\begin{theorem}
\label{LDPpoifin} Suppose that Conditions \ref{cndns} and \ref{cndnsnew}
hold. Then $\{Y^{\varepsilon }\}$ satisfies a large deviation principle in $%
D([0,T]:\mathbb{R}^{d})$ with speed $b(\varepsilon )$ and the rate function
given by 
\begin{equation*}
\bar{I}(\eta )=\inf_{\psi }\left\{ \frac{1}{2}\Vert \psi \Vert
_{2}^{2}\right\} ,
\end{equation*}%
where the infimum is taken over all $\psi \in L^{2}(\nu _{T})$ such that 
\begin{align}
\eta (t)& =\int_{0}^{t}[Db(X^{0}(s))]\eta (s)ds+\int_{\mathbb{X}\times
\lbrack 0,t]}[D_{x}G(X^{0}(s),y)]\eta (s)\nu (dy)ds  \notag \\
& \hspace{0.3cm}+\int_{\mathbb{X}\times \lbrack 0,t]}\psi
(y,s)G(X^{0}(s),y)\nu (dy)ds,\;\hspace{1.5cm}t\in \lbrack 0,T].
\label{eq:finite_dim_limit}
\end{align}
\end{theorem}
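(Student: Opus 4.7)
The plan is to invoke Theorem \ref{LDPpoi}. Let $\mathbb{U} = D([0,T]:\mathbb{R}^d)$ and define $\mathcal{G}^\varepsilon(m) \doteq a(\varepsilon)^{-1}(\bar{\mathcal{G}}^\varepsilon(m)-X^0)$, where $\bar{\mathcal{G}}^\varepsilon$ is the measurable solution map from Theorem \ref{pathuniq}(a); then $Y^\varepsilon = \mathcal{G}^\varepsilon(\varepsilon N^{\varepsilon^{-1}})$. Define $\mathcal{G}_0 : L^2(\nu_T) \to \mathbb{U}$ by sending $\psi$ to the unique solution $\eta$ of the linear Volterra equation \eqref{eq:finite_dim_limit}; well-posedness follows from standard linear theory using the bound $\sup_{|x|\leq m_T}\int|D_xG(x,y)|_{op}\,\nu(dy)<\infty$ from Condition \ref{cndnsnew}(b) and the Cauchy--Schwarz estimate $\int_{\mathbb{X}\times[0,T]}|\psi(y,s)||G(X^0(s),y)|\,\nu(dy)\,ds\leq \|\psi\|_2 \|M_G\|_2(1+m_T)$. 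With these choices the rate function $I$ in \eqref{eq:first_rate} coincides with $\bar I$, so it suffices to verify Condition \ref{gencond}.

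For Condition \ref{gencond}(a): if $g^\varepsilon \to g$ in $B_2(M)$ (weak topology), then since $(y,s)\mapsto G(X^0(s),y)1_{[0,t]}(s)$ lies in $L^2(\nu_T;\mathbb{R}^d)$ by Condition \ref{cndns}(c), the source term in \eqref{eq:finite_dim_limit} converges in $\mathbb{R}^d$ for each $t$; a Gronwall estimate applied to the linear equation satisfied by $\mathcal{G}_0(g^\varepsilon) - \mathcal{G}_0(g)$ upgrades this to convergence in $\mathbb{U}$.

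For Condition \ref{gencond}(b), take $\varphi^\varepsilon\in\mathcal{U}_{+,\varepsilon}^M$ with $\psi^\varepsilon = (\varphi^\varepsilon-1)/a(\varepsilon)$, and split $\psi^\varepsilon = \psi^{\varepsilon,1} + \psi^{\varepsilon,2}$ with $\psi^{\varepsilon,1} = \psi^\varepsilon 1_{\{|\psi^\varepsilon|\leq\beta/a(\varepsilon)\}}$. Let $\bar X^\varepsilon$ solve the controlled SDE driven by $N^{\varepsilon^{-1}\varphi^\varepsilon}$ and set $\bar Y^\varepsilon = (\bar X^\varepsilon - X^0)/a(\varepsilon)$. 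Compensating $N^{\varepsilon^{-1}\varphi^\varepsilon}$ with intensity $\varepsilon^{-1}\varphi^\varepsilon\nu_T$ and subtracting \eqref{eq:eq840} gives
\begin{align*}
\bar Y^\varepsilon(t) &= \int_0^t \frac{b(\bar X^\varepsilon(s))-b(X^0(s))}{a(\varepsilon)}\,ds + \int_{\mathbb{X}\times[0,t]} \frac{G(\bar X^\varepsilon(s),y)-G(X^0(s),y)}{a(\varepsilon)}\,\nu(dy)\,ds \\
&\quad + \int_{\mathbb{X}\times[0,t]} G(\bar X^\varepsilon(s),y)\,\psi^\varepsilon(y,s)\,\nu(dy)\,ds + \frac{\varepsilon}{a(\varepsilon)}\int_{\mathbb{X}\times[0,t]} G(\bar X^\varepsilon(s-),y)\,\tilde N^{\varepsilon^{-1}\varphi^\varepsilon}(dy,ds),
\end{align*}
where $\tilde N^{\varepsilon^{-1}\varphi^\varepsilon}$ denotes the compensated version of $N^{\varepsilon^{-1}\varphi^\varepsilon}$. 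The plan is then: (i) apply Lemma \ref{sineqs2} to obtain uniform in $\varepsilon$ moment bounds on $\bar X^\varepsilon$ and $\bar Y^\varepsilon$, yielding in particular $\bar X^\varepsilon \to X^0$ in probability in $\mathbb{U}$; (ii) show the martingale term vanishes in $L^2$, since its quadratic variation is bounded in expectation by $b(\varepsilon)\int M_G^2(y)(1+|\bar X^\varepsilon(s)|)^2\varphi^\varepsilon\,d\nu_T$, and the combination $b(\varepsilon)\to 0$, $M_G\in L^2(\nu)$, together with the splitting, keeps this factor controlled; (iii) deduce tightness of $\bar Y^\varepsilon$ in $\mathbb{U}$ from the bounds in (i); (iv) pass to the limit by Taylor-expanding $b$ and $G$ about $X^0$ via Condition \ref{cndnsnew}(b), combined with the weak convergence $\psi^{\varepsilon,1}\Rightarrow\psi$ and $\bar X^\varepsilon\to X^0$, so that any limit point of $\bar Y^\varepsilon$ solves \eqref{eq:finite_dim_limit} and hence equals $\mathcal{G}_0(\psi)$ by linear uniqueness.

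The main obstacle throughout is control of the large-amplitude piece $\psi^{\varepsilon,2}$, which need not lie in $L^2(\nu_T)$. Here Condition \ref{cndnsnew}(a) is essential: the exponential integrability $\int_\Gamma \exp(\delta h^2)\,d\nu < \infty$ built into the class $\mathcal{H}$ permits a Fenchel-type bound pairing $\ell(\varphi^\varepsilon)$ with $\exp(\delta h^2)$ to estimate $\int h(y)|\psi^{\varepsilon,2}(y,s)|\,\nu(dy)\,ds$ by a vanishing multiple of $L_T(\varphi^\varepsilon)\leq M a^2(\varepsilon)$ whenever $h\in\mathcal{H}$. These estimates, codified in Lemma \ref{sineqs2}, support the moment bounds in (i), the martingale estimate in (ii), and crucially yield $\int G(\bar X^\varepsilon,y)\psi^{\varepsilon,2}(y,s)\,\nu(dy)\,ds \to 0$ needed in (iv), so that the weak limit of the control term reduces to $\int G(X^0,y)\psi\,\nu(dy)\,ds$.
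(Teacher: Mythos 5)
Your proposal is correct and follows essentially the same route as the paper: define $\mathcal{G}^{\varepsilon}$ and $\mathcal{G}_{0}$ exactly as you do, verify Condition \ref{gencond}(a) by weak convergence of the source term plus equicontinuity and Gronwall (Lemma \ref{lem:lem636}), and verify Condition \ref{gencond}(b) via the same decomposition of $\bar{Y}^{\varepsilon,\varphi^{\varepsilon}}$ into drift, linearization, control and martingale pieces, with uniform moment bounds and the vanishing of the large-amplitude part of the control all resting on the exponential-integrability estimates for functions in $\mathcal{H}$ (Lemmas \ref{sineqs2}, \ref{Ineq1}--\ref{ctrllim}, \ref{bdX_poi}--\ref{bdY_poi}, \ref{cont_lim_poi}). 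The only point to make explicit when writing this up is that the convergence of the source term in part (a) must be shown uniform in $t$ (via the H\"older-in-time equicontinuity bound) before Gronwall can be applied in the sup norm.
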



Note that (\ref{eq:finite_dim_limit}) has a unique solution $\eta\in C([0,T]:%
\mathbb{R}^{d})$. In particular, $\bar{I}(\eta)=\infty$ for all $\eta\in
D([0,T]:\mathbb{R}^{d})\setminus C([0,T]:\mathbb{R}^{d})$.

The following theorem gives an alternative expression for the rate function.
From Condition \ref{cndns}(c) it follows that $y\mapsto G_{i}(X^{0}(s),y)$
is in $L^{2}(\nu )$ for all $s\in \lbrack 0,T]$ and $i=1,\ldots ,d$, where $%
G=(G_{1},\ldots ,G_{d})^{\prime }$. For $i=1,\ldots ,d$, let $e_{i}:\mathbb{X%
}\times \lbrack 0,T]\rightarrow \mathbb{R}$ be measurable functions such
that for each $s\in \lbrack 0,T]$, $\{e_{i}(\cdot ,s)\}_{i=1}^{d}$ is an
orthonormal collection in $L^{2}(\nu )$ and the linear span of the
collection is same as that of $\{G_{i}(X^{0}(s),\cdot )\}_{i=1}^{d}$. Define 
$A:[0,T]\rightarrow \mathbb{R}^{d\times d}$ such that, for each $s\in
\lbrack 0,T]$, 
\begin{equation*}
A_{ij}(s)=\langle G_{i}(X^{0}(s),\cdot ),e_{j}(s,\cdot )\rangle _{L^{2}(\nu
)},\;i,j=1,\ldots ,d.
\end{equation*}

For $\eta \in D([0,T]:\mathbb{R}^{d})$ let%
\begin{equation*}
I(\eta )=\inf_{u}\frac{1}{2}\int_{0}^{T}|u(s)|^{2}ds
\end{equation*}%
where the infimum is taken over all $u\in L^{2}([0,T]:\mathbb{R}^{d})$ such
that 
\begin{equation}
\eta (t)=\int_{0}^{t}[Db(X^{0}(s))+G_{1}(X^{0}(s))]\eta
(s)ds+\int_{0}^{t}A(s)u(s)ds,\;t\in \lbrack 0,T]
\label{eq:finite_dim_limit_u}
\end{equation}%
and $G_{1}(x)=\int_{\mathbb{X}}D_{x}G(x,y)\nu (dy).$

\begin{theorem}
\label{th:thequivratefn} Under the conditions of Theorem \ref{LDPpoifin}, $I
= \bar{I}$.
\end{theorem}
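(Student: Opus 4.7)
The plan is to set up an explicit correspondence between admissible controls $\psi \in L^{2}(\nu_T)$ in the definition of $\bar I$ and admissible controls $u \in L^{2}([0,T]:\mathbb{R}^d)$ in the definition of $I$, under which the two integral equations agree and the two cost functionals match.

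First I would observe that the only way $\psi$ enters \eqref{eq:finite_dim_limit} is through the inner products
\[
\int_{\mathbb{X}} \psi(y,s)\,G_i(X^{0}(s),y)\,\nu(dy) = \langle \psi(\cdot,s),\,G_i(X^{0}(s),\cdot)\rangle_{L^{2}(\nu)},\quad i=1,\ldots,d.
\]
Since $\{e_j(\cdot,s)\}_{j=1}^{d}$ is an orthonormal basis for the span of $\{G_i(X^{0}(s),\cdot)\}_{i=1}^{d}$, one has $G_i(X^{0}(s),\cdot)=\sum_{j=1}^{d}A_{ij}(s)\,e_j(\cdot,s)$ by definition of $A$. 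Defining
\[
u_j(s) \doteq \langle \psi(\cdot,s),\,e_j(\cdot,s)\rangle_{L^{2}(\nu)},\qquad j=1,\ldots,d,
\]
an elementary computation then yields
\[
\int_{\mathbb{X}} \psi(y,s)\,G(X^{0}(s),y)\,\nu(dy) = A(s)\,u(s),
\]
and also
\[
\int_{\mathbb{X}\times[0,t]}[D_x G(X^{0}(s),y)]\,\eta(s)\,\nu(dy)\,ds = \int_{0}^{t} G_1(X^{0}(s))\,\eta(s)\,ds,
\]
so \eqref{eq:finite_dim_limit} becomes \eqref{eq:finite_dim_limit_u}. Joint measurability of $\psi$ and of the $e_j$ (assumed in the statement) ensures $u$ is measurable, and Bessel's inequality gives $|u(s)|^{2} \le \|\psi(\cdot,s)\|_{L^{2}(\nu)}^{2}$, so that $\int_{0}^{T}|u(s)|^{2}ds \le \|\psi\|_{2}^{2}$. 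Taking infima proves $I(\eta) \le \bar I(\eta)$.

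For the opposite inequality, given any $u \in L^{2}([0,T]:\mathbb{R}^d)$ satisfying \eqref{eq:finite_dim_limit_u}, define
\[
\psi(y,s) \doteq \sum_{j=1}^{d} u_j(s)\,e_j(y,s).
\]
Joint measurability of the $e_j$ makes $\psi$ measurable, and orthonormality gives $\|\psi(\cdot,s)\|_{L^{2}(\nu)}^{2} = |u(s)|^{2}$, so $\psi \in L^{2}(\nu_T)$ with $\|\psi\|_{2}^{2} = \int_{0}^{T}|u(s)|^{2}ds$. The identity $\int_{\mathbb{X}}\psi(y,s)G(X^{0}(s),y)\nu(dy) = A(s)u(s)$ (from the first step) shows that this $\psi$ satisfies \eqref{eq:finite_dim_limit} for the same $\eta$. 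Taking infima yields $\bar I(\eta) \le I(\eta)$, completing the proof.

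Equivalently, one can view the argument as follows: in the infimum defining $\bar I(\eta)$, one may replace any candidate $\psi$ by its pointwise-in-$s$ orthogonal projection onto $\mathrm{span}\{e_j(\cdot,s)\}_{j=1}^{d}$ without changing the equation (since only inner products against functions in that span appear) and without increasing the cost $\tfrac12\|\psi\|_{2}^{2}$; the minimization then reduces exactly to the $d$-dimensional control problem defining $I(\eta)$. The only real technical point is ensuring the $e_j$ can be chosen jointly measurable in $(y,s)$, which is part of the setup, and verifying that $u$ constructed via inner products is in $L^{2}([0,T]:\mathbb{R}^{d})$, which follows from Bessel and Fubini.
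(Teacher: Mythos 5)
Your proposal is correct and follows essentially the same route as the paper: in one direction you expand $\psi=\sum_j u_j(s)e_j(\cdot,s)$ and use orthonormality to match costs exactly, and in the other you project via $u_j(s)=\langle\psi(\cdot,s),e_j(\cdot,s)\rangle_{L^2(\nu)}$ and use Bessel's inequality together with the fact that $G_i(X^0(s),\cdot)$ lies in the span of $\{e_j(\cdot,s)\}_{j=1}^d$ so that $A(s)u(s)=\int_{\mathbb{X}}\psi(y,s)G(X^0(s),y)\nu(dy)$. This is exactly the paper's argument (the paper phrases the second direction by completing $\{e_j\}$ to a CONS, but that is the same computation), and your closing remark about pointwise orthogonal projection is a clean way to summarize it.
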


\begin{remark}
\label{rem:rem1150}

\begin{description}
\item {(1)} Theorem \ref{th:thequivratefn} in particular says that the rate
function for $\{Y^{\varepsilon}\}$ is the same as that associated with the
large deviation principle with speed $\varepsilon$ for the Gaussian process 
\begin{equation*}
dZ^{\varepsilon}(t)=A_{1}(t)Z^{\varepsilon}(t)dt+\sqrt{\varepsilon }%
A(t)dW(t),\;Z^{\varepsilon}(0)=x_{0},
\end{equation*}
where $W$ is a standard $d$-dimensional Brownian motion and $%
A_{1}(t)=Db(X^{0}(t))+G_{1}(X^{0}(t))$.

\item {(2)} One can similarly establish moderate deviations results for
systems that have both Poisson and Brownian noise. In particular the
following result holds. Suppose $\sigma :\mathbb{R}^{d}\rightarrow \mathbb{R}%
^{d\times d}$ is a Lipschitz continuous function and $X^{\varepsilon }$
solves the integral equation 
\begin{equation*}
X^{\varepsilon }(t)=x_{0}+\int_{0}^{t}b(X^{\varepsilon }(s))ds+\sqrt{%
\varepsilon }\int_{[0,t]}\sigma (X^{\varepsilon }(s))dW(s)+\int_{\mathbb{X}%
\times \lbrack 0,t]}\varepsilon G(X^{\varepsilon }(s-),y)dN^{\varepsilon
^{-1}}.
\end{equation*}%
Then under Conditions \ref{cndns} and \ref{cndnsnew}, $\{Y^{\varepsilon }\}$
defined as in \eqref{eq:eq113} satisfies a large deviation principle in $%
D([0,T]:\mathbb{R}^{d})$ with speed $b(\varepsilon )$ and the rate function
given by 
\begin{equation*}
\bar{I}(\eta )=\inf_{\psi ,u}\frac{1}{2}\left\{ \Vert \psi \Vert
_{2}^{2}+\int_{[0,T]}|u(s)|^{2}ds\right\} ,
\end{equation*}%
where the infimum is taken over all $(\psi ,u)\in L^{2}(\nu _{T})\times
L^{2}([0,T]:\mathbb{R}^{d})$ such that 
\begin{align*}
\eta (t)& =\int_{0}^{t}[Db(X^{0}(s))]\eta (s)ds+\int_{0}^{t}\sigma
(X^{0}(s))u(s)ds \\
& \hspace{0.3cm}+\int_{\mathbb{X}\times \lbrack
0,t]}[D_{x}G(X^{0}(s),y)]\eta (s)\nu (dy)ds+\int_{\mathbb{X}\times \lbrack
0,t]}\psi (y,s)G(X^{0}(s),y)\nu (dy)ds.
\end{align*}%
Also, the rate function can be simplified as in Theorem \ref%
{th:thequivratefn}.
\end{description}
\end{remark}

\subsection{Infinite Dimensional SDE}

\label{sec:sec2.4} The equation considered here has been studied in \cite%
{KaXi95} where general sufficient conditions for strong existence and
pathwise uniqueness of solutions are identified. The solutions in general
will be distribution valued and a precise formulation of the solution space
is given in terms of Countable Hilbertian Nuclear Spaces (cf. \cite{KaXi95}%
). Recall that a separable Fr\'{e}chet space $\Phi $ is called a \textbf{%
countable Hilbertian nuclear space }(CHNS) if its topology is given by an
increasing sequence $\Vert \cdot \Vert _{n}$, $n\in \mathbb{N}_{0}$, of
compatible Hilbertian norms, and if for each $n\in \mathbb{N}_{0}$ there
exists $m>n$ such that the canonical injection from $\Phi _{m}$ into $\Phi
_{n}$ is Hilbert-Schmidt. Here $\Phi _{k}$, for each $k\in \mathbb{N}_{0}$,
is the completion of $\Phi $ with respect to $\Vert \cdot \Vert _{k}$.

Identify $\Phi_{0}^{\prime}$ with $\Phi_{0}$ using Riesz's representation
theorem and denote the space of bounded linear functionals on $\Phi_{n}$ by $%
\Phi_{-n}$. This space has a natural inner product [and norm] which we
denote by $\langle \cdot,\cdot\rangle_{-n}$ [resp. $\|\cdot\|_{-n}$], $n\in 
\mathbb{N}_{0}$, such that $\{\Phi_{-n}\}_{n\in\mathbb{N}_{0}}$ is a
sequence of increasing Hilbert spaces and the topological dual of $\Phi$,
denoted as $\Phi^{\prime}$ equals $\cup_{n=0}^{\infty}\Phi_{-n}$ (see
Theorem 1.3.1 of \cite{KaXi95}). Solutions of the SPDE considered in this
section will have sample paths in $D([0,T]:\Phi_{-n})$ for some finite value
of $n$.

We will assume that there is a sequence $\{\phi _{j}\}_{j\in \mathbb{N}%
}\subset \Phi $ such that $\{\phi _{j}\}$ is a complete orthonormal system
(CONS) in $\Phi _{0}$ and is a complete orthogonal system (COS) in each $%
\Phi _{n},n\in \mathbb{Z}$. Then $\{\phi _{j}^{n}\}=\{\phi _{j}\Vert \phi
_{j}\Vert _{n}^{-1}\}$ is a CONS in $\Phi _{n}$ for each $n\in \mathbb{Z}$.
It is easily seen that, for each $r>0$, $\eta \in \Phi _{-r}$ and $\phi \in
\Phi _{r}$, $\eta \lbrack \phi ]$ can be expressed as 
\begin{equation}
\eta \lbrack \phi ]=\sum_{j=1}^{\infty }\langle \eta ,\phi _{j}\rangle
_{-r}\langle \phi ,\phi _{j}\rangle _{r}.  \label{eqn:defsqbrkt}
\end{equation}%
We refer the reader to Example 1.3.2 of \cite{KaXi95} for a canonical
example of such a countable Hilbertian nuclear space (CHNS) defined using a
closed densely defined self-adjoint operator on $\Phi _{0}$. A similar
example is considered in Section \ref{sec:secexample}. The SPDE we consider
takes the form 
\begin{equation}
X^{\varepsilon }(t)=x_{0}+\int_{0}^{t}b(X^{\varepsilon }(s))ds+\int_{\mathbb{%
X}\times \lbrack 0,t]}\varepsilon G(X^{\varepsilon }(s-),y)N^{\varepsilon
^{-1}}(ds,dy)  \label{eq:sdeg}
\end{equation}%
where the coefficients $b$ and $G$ satisfy Condition \ref{bGcond_nucl} below
(cf. Chapter 6, \cite{kallianpur1994}). A precise definition of a solution
to (\ref{eq:sdeg}) is as follows.

\begin{definition}
\label{def:sde} Let $(\tilde{\Omega},\tilde{\mathcal{F}},\tilde{P})$ be a
probability space on which is given a Poisson random measure ${\mathbf{n}}%
_{\varepsilon}$ on $\mathbb{X}_{T}$ with intensity measure $%
\varepsilon^{-1}\nu_{T}$. Fix $r\in\mathbb{N}_{0}$ and suppose that $%
x_{0}\in\Phi_{-r}$. A stochastic process $\{X_{t}^{\varepsilon}\}_{t\in%
\lbrack 0,T]}$ defined on $\tilde{\Omega}$ is said to be a $\Phi_{-r}$%
-valued strong solution to the SDE \eqref{eq:sdeg} with $N^{%
\varepsilon^{-1}} $ replaced with ${\mathbf{n}}_{\varepsilon}$ and initial
value $x_{0}$, if the following hold.

\begin{enumerate}
\item[(a)] $X_{t}^{\varepsilon }$ is a $\Phi _{-r}$-valued $\tilde{\mathcal{F%
}}_{t}$-measurable random variable for all $t\in \lbrack 0,T]$, where $%
\tilde{\mathcal{F}}_{t}=\sigma \{{\mathbf{n}}_{\varepsilon }(B\times \lbrack
0,s]),s\leq t,B\in \mathcal{B}(\mathbb{X})\}$.

\item[(b)] $X^{\varepsilon }\in D([0,T]:\Phi _{-r})$ a.s.

\item[(c)] The map $(s,\omega )\mapsto b(X_{s}^{\varepsilon }(\omega ))$ is
measurable from $[0,T]\times \Omega $ to $\Phi _{-r}$ and the map $(s,\omega
,y)\mapsto G(s,X_{s-}^{\varepsilon }(\omega ),y)$ is $(\tilde{\mathcal{P}}%
\times \mathcal{B}(\mathbb{X}))/\mathcal{B}(\Phi _{-r})$ measurable, where $%
\tilde{\mathcal{P}}$ is the predictable $\sigma $-field corresponding to the
filtration $\{\tilde{\mathcal{F}}_{t}\}$. Furthermore, 
\begin{equation*}
\tilde{E}\int_{0}^{T}\int_{\mathbb{X}}\Vert G(s,X_{s}^{\varepsilon },v)\Vert
_{-r}^{2}\nu (dv)ds<\infty
\end{equation*}%
and 
\begin{equation*}
\tilde{E}\int_{0}^{T}\Vert b(X_{s}^{\varepsilon })\Vert _{-r}^{2}ds<\infty .
\end{equation*}

\item[(d)] For all $t\in \lbrack 0,T]$, almost all $\omega \in \tilde{\Omega}
$, and all $\phi \in \Phi $ 
\begin{equation}
X_{t}^{\varepsilon }[\phi ]=x_{0}[\phi ]+\int_{0}^{t}b(X_{s}^{\varepsilon
})[\phi ]ds+\varepsilon \int_{\mathbb{X}\times \lbrack
0,t]}G(s,X_{s-}^{\varepsilon },y)[\phi ]{\mathbf{n}}_{\varepsilon }(dy,ds).
\label{eq:sdeg2}
\end{equation}
\end{enumerate}
\end{definition}

We now present a condition from \cite{KaXi95} that ensures unique
solvability of \eqref{eq:sdeg}. Let $\theta_{p}:\Phi_{-
p}\rightarrow\Phi_{p} $ be the isometry such that for all $j\in\mathbb{N}$, $%
\theta_{p}(\phi_{j}^{-p})=\phi_{j}^{p}$. It is easy to check that for all $%
p\in\mathbb{N}$, $\theta_{p}(\Phi)\subseteq\Phi$ (see Remark 6.1.1 of \cite%
{KaXi95}).

\begin{condition}
\label{bGcond_nucl} For some $p,q\in\mathbb{N}$ with $q>p$ for which the
embedding of $\Phi_{-p}$ to $\Phi_{-q}$ is Hilbert -Schmidt, the following
hold.

\begin{enumerate}
\item[(a)] \label{bGcont_nucl} (Continuity) $b:\Phi ^{\prime }\rightarrow
\Phi ^{\prime }$ is such that it is a continuous function from $\Phi _{-p}$
to $\Phi _{-q}$. $G$ is a map from $\Phi ^{\prime }\times \mathbb{X}$ to $%
\Phi ^{\prime }$ such that for each $u\in \Phi _{-p}$, $G(u,\cdot )\in L^{2}(%
\mathbb{X},\nu ,\Phi _{-p})$ and the mapping $\Phi _{-p}\ni u\mapsto
G(u,\cdot )\in L^{2}(\mathbb{X},\nu ,\Phi _{-p})$ is continuous.

\item[(b)] \label{bGgrowth_nucl} There exist $M_{b}\in (0,\infty )$ and $%
M_{G}\in L^{1}(\nu )\cap L^{2}(\nu )$ such that 
\begin{equation*}
\Vert b(u)\Vert _{-q}\leq M_{b}(1+\Vert u\Vert _{-p}),\quad \Vert
G(u,y)\Vert _{-p}\leq M_{G}(y)(1+\Vert u\Vert _{-p}),\quad u\in \Phi
_{-p},y\in \mathbb{X}.
\end{equation*}

\item[(c)] For some $C_{b}\in (0,\infty )$ and all $\phi \in \Phi $ 
\begin{equation*}
2b(\phi )[\theta _{p}\phi ]\leq C_{b}(1+\Vert \phi \Vert _{-p}^{2}).
\end{equation*}

\item[(d)] \label{bLip_nucl} For some $L_{b}\in (0,\infty )$ 
\begin{equation*}
\left\langle u-u^{\prime },b(u)-b(u^{\prime })\right\rangle _{-q}\leq
L_{b}\Vert u-u^{\prime }\Vert _{-q}^{2},\ u,u^{\prime }\in \Phi _{-p}.
\end{equation*}

\item[(e)] For some $L_{G}\in L^{1}(\nu )\cap L^{2}(\nu )$ 
\begin{equation*}
\Vert G(u,y)-G(u^{\prime },y)\Vert _{-q}\leq L_{G}(y)\Vert u-u^{\prime
}\Vert _{-q},\ u,u^{\prime }\in \Phi _{-p},\ y\in \mathbb{X}.
\end{equation*}
\end{enumerate}
\end{condition}

The following unique solvability result follows from \cite{KaXi95}. For part
(b) see Theorem 3.7 in \cite{BCD}.

\begin{theorem}
\label{Thm: strsol} Fix $x_{0}\in\Phi_{-p}$, and assume Condition \ref%
{bGcond_nucl}. The following conclusions hold.

\begin{enumerate}
\item[(a)] For each $\varepsilon >0$, there is a measurable map $\bar{%
\mathcal{G}}^{\varepsilon }:\mathbb{M}\rightarrow D([0,T]:\Phi _{-q})$ such
that for any probability space $(\tilde{\Omega},\tilde{\mathcal{F}},\tilde{P}%
)$ and a Poisson random measure ${\mathbf{n}}_{\varepsilon }$ as in
Definition \ref{def:sde}, $\tilde{X}^{\varepsilon }=\bar{\mathcal{G}}%
^{\varepsilon }(\varepsilon {\mathbf{n}}_{\varepsilon })$ is the unique $%
\Phi _{-q}$-valued strong solution of \eqref{eq:sdeg} with $N^{\varepsilon
^{-1}}$ replaced with ${\mathbf{n}}_{\varepsilon }$. Furthermore, for every $%
t\in \lbrack 0,T]$, $\tilde{X}_{t}^{\varepsilon }\in \Phi _{-p}$ and $\tilde{%
E}\sup_{0\leq t\leq T}\Vert \tilde{X}_{t}^{\varepsilon }\Vert
_{-p}^{2}<\infty $, In particular $X^{\varepsilon }=\bar{\mathcal{G}}%
^{\varepsilon }(\varepsilon N^{\varepsilon ^{-1}})$ satisfies, for every $%
\phi \in \Phi $, 
\begin{equation}
X_{t}^{\varepsilon }[\phi ]=X_{0}[\phi ]+\int_{0}^{t}b(X_{s}^{\varepsilon
})[\phi ]ds+\varepsilon \int_{\lbrack 0,t]\times \mathbb{X}%
}G(X_{s-}^{\varepsilon },y)[\phi ]N^{\varepsilon ^{-1}}(dy,ds).
\label{eq:sdeg2b}
\end{equation}

\item[(b)] The integral equation 
\begin{equation}
X^{0}(t)=x_{0}+\int_{0}^{t}b(X^{0}(s))ds+\int_{E\times \lbrack
0,t)}G(X^{0}(s),y)\nu (dy)ds.  \label{eq:eq846}
\end{equation}%
has a unique $\Phi _{-q}$-valued continuous solution. That is, there is a
unique $X^{0}\in C([0,T],\Phi _{-q})$ such that for all $t\in \lbrack 0,T]$
and all $\phi \in \Phi $ 
\begin{equation}
X_{t}^{0}[\phi ]=X_{0}[\phi ]+\int_{0}^{t}b(X_{s}^{0})[\phi
]ds+\int_{[0,t]\times \mathbb{X}}G(X_{s}^{0},y)[\phi ]\nu (dy)ds.
\label{eq:sdeg4}
\end{equation}%
Furthermore $X_{t}^{0}\in \Phi _{-p}$ for all $t\in \lbrack 0,T]$ and 
\begin{equation}
m_{T}=\sup_{0\leq t\leq T}\Vert X_{t}^{0}\Vert _{-p}<\infty .
\label{eq:eq336}
\end{equation}
\end{enumerate}
\end{theorem}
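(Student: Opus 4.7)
The plan is to reduce part (a) to the Kallianpur--Xiong framework via Galerkin approximation, and part (b) to a deterministic fixed-point argument. Neither part needs anything genuinely new beyond exploiting the conditions in Condition \ref{bGcond_nucl}, but the bootstrap from $\Phi_{-q}$-valued to $\Phi_{-p}$-valued regularity requires some care.

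For part (a), I would first construct candidate solutions by projecting \eqref{eq:sdeg} onto the $n$-dimensional subspace spanned by $\{\phi_{j}\}_{j=1}^{n}$, using the assumed CONS structure. Each projection is a finite-dimensional jump SDE to which Theorem \ref{pathuniq}(a) applies (after verifying the Lipschitz and growth bounds in its finite-dimensional form using Conditions \ref{bGcond_nucl}(b), (d), (e)), producing adapted cadlag processes $X^{\varepsilon,n}$. The key step is a uniform-in-$n$ moment bound
\begin{equation*}
\sup_{n}\tilde{E}\sup_{0\le t\le T}\|X^{\varepsilon,n}_{t}\|_{-p}^{2}<\infty,
\end{equation*}
obtained by applying It\^o's formula to $\|X^{\varepsilon,n}_{t}\|_{-p}^{2}$: the drift contribution is absorbed via the coercivity Condition \ref{bGcond_nucl}(c), the compensated jump part is a martingale, and the predictable quadratic variation from the PRM is controlled by the growth Condition \ref{bGcond_nucl}(b), followed by Gronwall. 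Because the embedding $\Phi_{-p}\hookrightarrow\Phi_{-q}$ is Hilbert--Schmidt, this bound combined with an Aldous jump-tightness estimate yields tightness of $\{X^{\varepsilon,n}\}$ in $D([0,T]:\Phi_{-q})$; any weak limit point solves \eqref{eq:sdeg} in the sense of Definition \ref{def:sde} and inherits the $\Phi_{-p}$ moment bound. Pathwise uniqueness in $D([0,T]:\Phi_{-q})$ follows by applying Gronwall to $\tilde{E}\|\tilde{X}^{\varepsilon}_{t}-\tilde{X}^{\varepsilon,\prime}_{t}\|_{-q}^{2}$ using Conditions \ref{bGcond_nucl}(d)-(e). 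The existence of the measurable selection $\bar{\mathcal{G}}^{\varepsilon}$ then follows from a Yamada--Watanabe type theorem for PRM-driven SDEs.

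For part (b), the deterministic equation \eqref{eq:eq846} is handled by Picard iteration in $C([0,T]:\Phi_{-q})$: starting from $X^{(0)}(t)=x_{0}$, the iteration
\begin{equation*}
X^{(n+1)}(t)=x_{0}+\int_{0}^{t}b(X^{(n)}(s))\,ds+\int_{[0,t]\times \mathbb{X}}G(X^{(n)}(s),y)\,\nu(dy)\,ds
\end{equation*}
is a contraction after suitable time re-weighting, by Conditions \ref{bGcond_nucl}(d)-(e) together with $L_{G}\in L^{1}(\nu)$. The limit is the unique continuous $\Phi_{-q}$-valued solution $X^{0}$. To upgrade to $X^{0}_{t}\in\Phi_{-p}$ and obtain \eqref{eq:eq336}, take $\|\cdot\|_{-p}$ norms directly in \eqref{eq:sdeg4}, invoke the growth Condition \ref{bGcond_nucl}(b) on $b$ and $G$, and apply Gronwall with initial datum $\|x_{0}\|_{-p}<\infty$.

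The principal obstacle is the bootstrap step in part (a): strong existence and uniqueness are naturally formulated in the weaker space $\Phi_{-q}$ where the Lipschitz estimates live, but the downstream MDP analysis requires the stronger $\Phi_{-p}$ moment bound. The coercivity Condition \ref{bGcond_nucl}(c), expressed through the isometry $\theta_{p}$, is indispensable here, because it is precisely what makes the It\^o expansion of $\|X^{\varepsilon,n}_{t}\|_{-p}^{2}$ at the Galerkin level controllable; without (c) the drift term could destabilize the $\Phi_{-p}$-norm growth and the moment bound would fail to pass to the limit.
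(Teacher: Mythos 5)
First, note that the paper does not prove Theorem \ref{Thm: strsol} at all: it is quoted from \cite{KaXi95} (Theorem 6.2.2 and its surrounding machinery), with part (b) referred to Theorem 3.7 of \cite{BCD}. Your outline for part (a) reproduces the skeleton of that cited argument correctly (Galerkin projection, It\^{o}'s formula for $\Vert X^{\varepsilon,n}_t\Vert_{-p}^2$ with the drift absorbed by the coercivity \ref{bGcond_nucl}(c), Gronwall, Hilbert--Schmidt embedding for tightness, and monotonicity/Lipschitz for pathwise uniqueness in $\Phi_{-q}$); this is also exactly the scheme the paper itself spells out later in Lemma \ref{bdY_poi_inf}(a). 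One inaccuracy: you cannot invoke Theorem \ref{pathuniq}(a) for the projected systems, because Condition \ref{bGcond_nucl} imposes no Lipschitz condition on $b$ --- only continuity, growth, coercivity and the one-sided bound (d) --- so $\beta^{d}$ need not be Lipschitz. The finite-dimensional existence must come from a result for monotone coefficients (Lemma 6.2.2 of \cite{KaXi95}), not from the Lipschitz theorem.

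The genuine gap is in part (b). Picard iteration in $C([0,T]:\Phi_{-q})$ cannot work here, for two reasons. First, Condition \ref{bGcond_nucl}(d) is only a one-sided (monotonicity) estimate on $b$; it yields uniqueness via Gronwall applied to $\frac{d}{dt}\Vert X(t)-X'(t)\Vert_{-q}^{2}$, but it does not make the Picard map a contraction. Second, and more fundamentally, $b$ loses regularity: by \ref{bGcond_nucl}(a),(b) it is continuous and of linear growth only as a map from $\Phi_{-p}$ \emph{into} $\Phi_{-q}$ (think of $b_1(v)[\phi]=v[A\phi]$ in the example of Section \ref{sec:secexample}), so the Picard map is not a self-map of $C([0,T]:\Phi_{-q})$ with controlled norms --- this loss of regularity is precisely why the nuclear-space/Galerkin framework is used. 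The same issue defeats your bootstrap to $\Phi_{-p}$: ``taking $\Vert\cdot\Vert_{-p}$ norms in \eqref{eq:sdeg4} and invoking the growth condition'' fails because \ref{bGcond_nucl}(b) bounds $\Vert b(u)\Vert_{-q}$, not $\Vert b(u)\Vert_{-p}$. The $\Phi_{-p}$ bound for $X^{0}$ must be obtained exactly as in part (a): apply the chain rule to $\Vert X^{0,d}(t)\Vert_{-p}^{2}$ at the Galerkin level, use the coercivity $2b(\phi)[\theta_{p}\phi]\leq C_{b}(1+\Vert\phi\Vert_{-p}^{2})$, Gronwall uniformly in $d$, and pass to the limit. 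You correctly flagged coercivity as indispensable in part (a) but then dropped it in part (b), where it is needed for the same reason.
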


As before, we are interested in a LDP for $\{Y^{\varepsilon}\}$, where 
\begin{equation*}
Y^{\varepsilon}\equiv\frac{1}{a(\varepsilon)}(X^{\varepsilon}-X^{0}),
\end{equation*}
and $a(\varepsilon)$ is as in \eqref{eq:eq1138}. For that we will need some
additional conditions on the coefficients. Recall the definition of Fr\'{e}%
chet derivative of a real valued function on a Hilbert space (see Chapter
II.5 of \cite{DuSc}), which characterizes the derivative as a bounded linear
functional on the Hilbert space. For the remainder of this section we
considered a fixed $p$ and $q$ that satisfy Condition \ref{bGcond_nucl}.

\begin{condition}
\label{bGcond_nuclNew} There exists a positive integer $q_{1}>q$ such that
the canonical mapping of $\Phi _{-q}$ to $\Phi _{-q_{1}}$ is
Hilbert-Schmidt, and the following hold.

\begin{enumerate}
\item[(a)] For every $\phi \in \Phi $, the Fr\'{e}chet derivative of the map 
$\Phi _{-q}\ni v\mapsto b(v)[\phi ]$ from $\Phi _{-q}$ to $\mathbb{R}$
exists and is denoted by $D(b(\cdot )[\phi ])$. For each $\phi \in \Phi $,
there exists $L_{Db}(\phi )\in (0,\infty )$ such that 
\begin{equation*}
\Vert D(b(u)[\phi ])-D(b(u^{\prime })[\phi ])\Vert _{op}\leq L_{Db}(\phi
)\Vert u-u^{\prime }\Vert _{-q},\ u,u^{\prime }\in \Phi _{-p},
\end{equation*}%
Here $\Vert \cdot \Vert _{op}$ is the operator norm in $L(\Phi _{-q},\mathbb{%
R})$.

\item[(b)] Recall that $\phi _{k}^{q_{1}}\doteq \phi _{k}\Vert \phi
_{k}\Vert _{q_{1}}^{-1}$. Then for every $\eta \in \Phi _{-q}$ 
\begin{equation*}
\sup_{\{v\in \Phi _{-p}:\Vert v\Vert _{-p}\leq m_{T}\}}\sum_{k=1}^{\infty
}\left\vert D\left( b(v)[\phi _{k}^{q_{1}}]\right) [\eta ]\right\vert
^{2}\equiv M_{2}(\eta )<\infty .
\end{equation*}%
This means that $A_{v}(\eta ):\Phi \rightarrow \mathbb{R}$ defined by $%
A_{v}(\eta )[\phi ]=D\left( b(v)[\phi ]\right) [\eta ]$ extends to a bounded
linear map from $\Phi _{q_{1}}$ to $\mathbb{R}$ (i.e. an element of $\Phi
_{-q_{1}}$). For all $v\in \Phi _{-p}$ such that $\Vert v\Vert _{-p}\leq
m_{T}$, $\eta \mapsto A_{v}(\eta )$ is a continuous map from $\Phi _{-q}$ to 
$\Phi _{-q_{1}}$ and there exist $M_{A},L_{A},C_{A}\in (0,\infty )$ such
that 
\begin{equation}
\sup_{\{v\in \Phi _{-p}:\Vert v\Vert _{-p}\leq m_{T}\}}\Vert A_{v}(\eta
)\Vert _{-q_{1}}\leq M_{A}(1+\Vert \eta \Vert _{-q}),\mbox{ for all }\eta
\in \Phi _{-q}.  \label{eq: eq429a}
\end{equation}%
\begin{equation}
\sup_{\{v\in \Phi _{-p}:\Vert v\Vert _{-p}\leq m_{T}\}}\langle \eta ,A_{v}(\eta )\rangle _{-q_{1}}\leq
L_{A}\Vert \eta\Vert _{-q_{1}}^{2},\mbox{ for all }\eta
\in \Phi _{-q}.  \label{eq: eq429b}
\end{equation}%
\begin{equation}
\sup_{\{v\in \Phi _{-p}:\Vert v\Vert _{-p}\leq m_{T}\}}2A_{v}(\phi +\zeta
)[\theta _{q}\phi ]\leq C_{A}(\Vert \zeta \Vert _{-p}+\Vert \phi \Vert
_{-q})\Vert \phi \Vert _{-q},\mbox{ for
all }\phi \in \Phi ,\zeta \in \Phi _{-p},  \label{eq: eq429c}
\end{equation}%
where $\theta _{q}$ was defined just before Condition \ref{bGcond_nucl}.

\item[(c)] For every $\phi \in \Phi _{q_{1}},y\in \mathbb{X}$, the Fr\'{e}%
chet derivative of $G(\cdot ,y)[\phi ]:\Phi _{-q_{1}}\rightarrow \mathbb{R}$%
, denoted as $D_{x}(G(\cdot ,y)[\phi ])$, exists. The map $\Phi _{-p}\ni
u\rightarrow D_{x}(G(u,y)[\phi ])\in L(\Phi _{-q_{1}},\mathbb{R})$ is
Lipschitz continuous: for each $\phi \in \Phi _{q_{1}}$ there exists $%
L_{DG}(\phi ,\cdot )\in L^{1}(\nu )$ such that 
\begin{equation*}
\Vert D_{x}G(u,y)[\phi ]-D_{x}G(u^{\prime },y)[\phi ]\Vert _{op,-q_{1}}\leq
L_{DG}(\phi ,y)\Vert u-u^{\prime }\Vert _{-q},\ u,u^{\prime }\in \Phi
_{-p},y\in \mathbb{X}.
\end{equation*}%
There exists $M_{DG}:\Phi _{-p}\times \mathbb{X}\rightarrow \mathbb{R}_{+}$
such that 
\begin{equation}
\Vert D_{x}(G(u,y)[\phi ])\Vert _{op,-q_{1}}\leq M_{DG}(u,y)\Vert \phi \Vert
_{q_{1}},\;u\in \Phi _{-p},\phi \in \Phi _{q_{1}},y\in \mathbb{X}.
\label{eq:225a}
\end{equation}%
\begin{equation*}
\Vert D_{x}(G(u,y)[\phi ])\Vert _{op,-q}\leq M_{DG}(u,y)\Vert \phi \Vert
_{q},\;u\in \Phi _{-p},\phi \in \Phi _{q},y\in \mathbb{X},
\end{equation*}%
and 
\begin{equation*}
M_{DG}^{\ast }\doteq \sup_{\{u\in \Phi _{-p}:\Vert u\Vert _{-p}\leq
m_{T}\}}\int_{\mathbb{X}}\max \{M_{DG}(u,y),M_{DG}^{2}(u,y)\}\nu (dy)<\infty
.
\end{equation*}%
Here $\Vert \cdot \Vert _{op,-q_{1}}$ (resp. $\Vert \cdot \Vert _{op,-q}$)
is the operator norm in $L(\Phi _{-q_{1}},\mathbb{R})$ (resp. $L(\Phi _{-q},%
\mathbb{R})$).

\item[(d)] The functions $M_{G}$ and $L_{G}$ in Condition \ref{bGcond_nucl}
are in $\mathcal{H}$ defined by (\ref{eq:defofH}).
\end{enumerate}
\end{condition}

Theorem \ref{etaeqexist} shows that under Conditions \ref{bGcond_nucl} and %
\ref{bGcond_nuclNew}, for every $\psi\in L^{2}(\nu_{T})$ there is a unique $%
\eta\in C([0,T],\Phi_{-q_{1}})$ that solves 
\begin{align}
\eta(t) & =\int_{0}^{t}Db(X^{0}(s))\eta(s)ds+\int_{\mathbb{X}\times
\lbrack0,t]}D_{x}G(X^{0}(s),y)\eta(s)\nu(dy)ds  \notag \\
& \hspace{0.3cm}+\int_{\mathbb{X}\times\lbrack0,t]}G(X^{0}(s),y)\psi
(y,s)\nu(dy)ds,  \label{nueq_nucl}
\end{align}
in the sense that for every $\phi\in\Phi$ 
\begin{align}
\eta(t)[\phi] & =\int_{0}^{t}D(b(X^{0}(s))[\phi])\eta(s)ds+\int _{\mathbb{X}%
\times\lbrack0,t]}D_{x}(G(X^{0}(s),y)[\phi])\eta(s)\nu (dy)ds  \notag \\
& \hspace{0.3cm}+\int_{\mathbb{X}\times\lbrack0,t]}G(X^{0}(s),y)[\phi
]\psi(y,s)\nu(dy)ds.  \label{limpt_poi_inf}
\end{align}
The following is the main result of this section.

\begin{theorem}
\label{LDPpoi_inf} Suppose Conditions \ref{bGcond_nucl} and \ref%
{bGcond_nuclNew} hold. Then $\{Y^{\varepsilon}\}_{\varepsilon>0}$ satisfies
a large deviation principle in $D([0,T],\Phi_{-q_{1}})$ with speed $%
b(\varepsilon)$ and rate function $I$ given by 
\begin{equation*}
I(\eta)=\inf_{\psi}\left\{ \frac{1}{2}\Vert\psi\Vert_{2}^{2}\right\} ,
\end{equation*}
where the infimum is taken over all $\psi\in L^{2}(\nu_{T})$ such that $%
(\eta,\psi)$ satisfy $\eqref{nueq_nucl}$.
\end{theorem}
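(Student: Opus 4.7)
The strategy is to apply Theorem \ref{LDPpoi} with $\mathbb{U} = D([0,T]:\Phi_{-q_1})$. By Theorem \ref{Thm: strsol}(a) there is a measurable $\bar{\mathcal{G}}^{\varepsilon}:\mathbb{M}\to D([0,T]:\Phi_{-q})\subset D([0,T]:\Phi_{-q_1})$ with $X^{\varepsilon} = \bar{\mathcal{G}}^{\varepsilon}(\varepsilon N^{\varepsilon^{-1}})$; setting $\mathcal{G}^{\varepsilon}(m) = (\bar{\mathcal{G}}^{\varepsilon}(m) - X^{0})/a(\varepsilon)$ gives $Y^{\varepsilon} = \mathcal{G}^{\varepsilon}(\varepsilon N^{\varepsilon^{-1}})$. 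Define $\mathcal{G}_{0}(\psi) = \eta$, where $\eta \in C([0,T]:\Phi_{-q_1})$ is the unique solution of \eqref{nueq_nucl} supplied by Theorem \ref{etaeqexist}. With these choices the rate function \eqref{eq:first_rate} coincides with the $I$ in the statement, so it suffices to verify parts (a) and (b) of Condition \ref{gencond}.

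For (a), if $g^{\varepsilon}\to g$ weakly in $B_{2}(M)$, the difference $\zeta^{\varepsilon} = \mathcal{G}_{0}(g^{\varepsilon}) - \mathcal{G}_{0}(g)$ satisfies a linear integral equation in $\Phi_{-q_1}$ forced only by $F^{\varepsilon}(t) = \int_{\mathbb{X}\times[0,t]}G(X^{0}(s),y)(g^{\varepsilon}-g)(y,s)\,\nu(dy)\,ds$. A Gronwall estimate based on Condition \ref{bGcond_nuclNew}(a)--(c) reduces (a) to showing $F^{\varepsilon}\to 0$ in $D([0,T]:\Phi_{-q_1})$. For each test function $\phi_{k}^{q_{1}}$ the kernel $y\mapsto G(X^{0}(s),y)[\phi_{k}^{q_{1}}]$ lies in $L^{2}(\nu_T)$ by Condition \ref{bGcond_nucl}(b), so weak convergence of $g^{\varepsilon}$ yields coordinatewise convergence, and summability over $k$ uniformly in $t$ follows from the Hilbert--Schmidt embedding $\Phi_{-q}\hookrightarrow \Phi_{-q_{1}}$.

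For (b), given $\varphi^{\varepsilon}\in \mathcal{U}_{+,\varepsilon}^{M}$ with $\psi^{\varepsilon} 1_{\{|\psi^{\varepsilon}|\le \beta/a(\varepsilon)\}}\Rightarrow \psi$ in $B_{2}((M\kappa_{2}(1))^{1/2})$, let $\bar{X}^{\varepsilon,\varphi^{\varepsilon}}$ solve the controlled SPDE driven by $N^{\varepsilon^{-1}\varphi^{\varepsilon}}$ and set $\bar{Y}^{\varepsilon,\varphi^{\varepsilon}} = (\bar{X}^{\varepsilon,\varphi^{\varepsilon}}-X^{0})/a(\varepsilon)$. The argument has three parts. \emph{Step 1 (moment bounds).} Apply It\^o's formula to $\|\bar{X}^{\varepsilon,\varphi^{\varepsilon}}(t)\|_{-p}^{2}$ and $\|\bar{Y}^{\varepsilon,\varphi^{\varepsilon}}(t)\|_{-q_{1}}^{2}$, using the monotonicity-type bounds of Conditions \ref{bGcond_nucl}(c), \ref{bGcond_nuclNew}(b)(c) and the splitting of $\psi^{\varepsilon}$ from Lemma \ref{sineqs2} into a part in $B_{2}((M\kappa_{2}(1))^{1/2})$ and a vanishing remainder, to obtain
\begin{equation*}
\sup_{\varepsilon>0}\bar{E}\Bigl[\sup_{t\le T}\|\bar{X}^{\varepsilon,\varphi^{\varepsilon}}(t)\|_{-p}^{2}\Bigr] + \sup_{\varepsilon>0}\bar{E}\Bigl[\sup_{t\le T}\|\bar{Y}^{\varepsilon,\varphi^{\varepsilon}}(t)\|_{-q_{1}}^{2}\Bigr] < \infty.
\end{equation*}
\emph{Step 2 (tightness).} An Aldous--Kurtz criterion applied to each coordinate $\bar{Y}^{\varepsilon,\varphi^{\varepsilon}}[\phi_{k}^{q_{1}}]$, with increments estimated via Step 1 and the $\mathcal{H}$-exponential integrability of $L_{G},M_{G}$ (Condition \ref{bGcond_nuclNew}(d)), yields tightness in $D([0,T]:\Phi_{-q_{1}})$. \emph{Step 3 (limit identification).} A first-order Taylor expansion of $b$ and $G$ around $X^{0}$ gives, for every $\phi\in\Phi$,
\begin{align*}
\bar{Y}^{\varepsilon,\varphi^{\varepsilon}}(t)[\phi]
&= \int_{0}^{t} D(b(X^{0}(s))[\phi])\,\bar{Y}^{\varepsilon,\varphi^{\varepsilon}}(s)\,ds \\
&\quad + \int_{\mathbb{X}\times[0,t]} D_{x}(G(X^{0}(s),y)[\phi])\,\bar{Y}^{\varepsilon,\varphi^{\varepsilon}}(s)\,\varphi^{\varepsilon}(y,s)\,\nu(dy)\,ds \\
&\quad + \int_{\mathbb{X}\times[0,t]} G(X^{0}(s),y)[\phi]\,\psi^{\varepsilon}(y,s)\,\nu(dy)\,ds + R^{\varepsilon}(t)[\phi],
\end{align*}
where $R^{\varepsilon}$ gathers a compensated Poisson martingale of order $\sqrt{b(\varepsilon)}$ and Taylor remainders of order $a(\varepsilon)$, both tending to zero by Step 1. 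Passing to the limit along a convergent subsequence, using (a) for the linear drift term, the assumed weak limit of the truncated $\psi^{\varepsilon}$, and the $\mathcal{H}$-bound to control the tail contribution of $\psi^{\varepsilon}$ on $\{|\psi^{\varepsilon}|>\beta/a(\varepsilon)\}$ tested against the $L^{2}$-kernel $y\mapsto G(X^{0}(s),y)[\phi]$, identifies the limit with the unique solution $\eta = \mathcal{G}_{0}(\psi)$.

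The principal obstacle is Step 1: the cost bound $L_{T}(\varphi^{\varepsilon})\le M a^{2}(\varepsilon)$ does not give a uniform $L^{2}(\nu_T)$-bound on $\psi^{\varepsilon}$, so the jump-noise contribution cannot be controlled directly. The combination of the splitting in Lemma \ref{sineqs2} with the exponential integrability built into the class $\mathcal{H}$ is what closes the estimate for the large-jump part on $\{|\psi^{\varepsilon}|>1/a(\varepsilon)\}$. The Hilbert--Schmidt embedding $\Phi_{-q}\hookrightarrow \Phi_{-q_{1}}$ is essential throughout for promoting coordinatewise bounds to norm bounds in $\Phi_{-q_{1}}$, playing the same structural role here as $\Phi_{-p}\hookrightarrow \Phi_{-q}$ does in the underlying existence theory.
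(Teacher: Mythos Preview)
Your overall strategy---verify Condition \ref{gencond} for suitable $\mathcal{G}^{\varepsilon}$, $\mathcal{G}_{0}$ via moment bounds, tightness, and limit identification---is exactly the paper's approach, and your treatment of part (a) and of the limit identification in Step 3 is essentially correct. However, Step 1 contains a genuine gap that would block Step 2.

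You propose to bound $\sup_{t}\|\bar{Y}^{\varepsilon,\varphi^{\varepsilon}}(t)\|_{-q_{1}}^{2}$ in expectation, but the paper establishes the bound in the \emph{stronger} norm $\|\cdot\|_{-q}$ (Lemma \ref{bdY_poi_inf}(b)). This is not cosmetic: the tightness criterion in $D([0,T]:\Phi_{-q_{1}})$ used here (Theorem 2.5.2 of \cite{KaXi95}, a Mitoma-type result) requires tightness of $\sup_{t}\|\bar{Y}^{\varepsilon,\varphi^{\varepsilon}}(t)\|_{-q}$ together with coordinatewise tightness of $\bar{Y}^{\varepsilon,\varphi^{\varepsilon}}[\phi]$; a bound only in $\|\cdot\|_{-q_{1}}$ plus coordinate tightness does not yield tightness in $D([0,T]:\Phi_{-q_{1}})$, since it supplies no spatial compactness. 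Note also that the one-sided Lipschitz hypothesis you need, Condition \ref{bGcond_nucl}(d), is stated in $\Phi_{-q}$ and has no analogue in $\Phi_{-q_{1}}$, so the It\^o computation for $\bar{Y}$ naturally lives at level $-q$. The conditions you cite for this step, \ref{bGcond_nuclNew}(b)(c), concern the \emph{derivatives} $A_{v}$ and $D_{x}G$ and are used in the paper for the limit equation and Taylor remainders, not for the moment bound on $\bar{Y}^{\varepsilon,\varphi^{\varepsilon}}$; the relevant inputs are Conditions \ref{bGcond_nucl}(b),(d),(e).

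A second, more technical point: you cannot apply It\^o's formula directly to $\|\bar{X}^{\varepsilon,\varphi^{\varepsilon}}(t)\|_{-p}^{2}$, because $b$ maps $\Phi_{-p}$ only into $\Phi_{-q}$ and the coercivity bound in Condition \ref{bGcond_nucl}(c) is formulated via the pairing $b(\phi)[\theta_{p}\phi]$ for $\phi\in\Phi$, not as an inner product in $\Phi_{-p}$. The paper handles this by a finite-dimensional Galerkin approximation $\bar{X}^{\varepsilon,d,\varphi}$, obtains the $\|\cdot\|_{-p}$ bound uniformly in $d$, and passes to the limit (Lemma \ref{bdY_poi_inf}(a)). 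Without this step the $\Phi_{-p}$ bound on $\bar{X}^{\varepsilon,\varphi^{\varepsilon}}$, which feeds into the martingale and error estimates, is only formal.
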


In Section \ref{sec:secexample} we will provide an example taken from \cite%
{KaXi95} where Conditions \ref{bGcond_nucl} and \ref{bGcond_nuclNew} hold.

\section{Proof of Theorem \protect\ref{LDPpoi}}

\label{sec:proofthm1}

The following inequalities will be used several times. Recall the function $%
\ell (r)=r\log r-r+1$.

\begin{lemma}
\label{sineqs}

\begin{enumerate}
\item[(a)] For $a,b\in (0,\infty )$ and $\sigma \in \lbrack 1,\infty )$, $%
ab\leq e^{\sigma a}+\frac{1}{\sigma }\ell (b).$

\item[(b)] For every $\beta >0$, there exist $\kappa _{1}(\beta ),\kappa
_{1}^{\prime }(\beta )\in (0,\infty )$ such that $\kappa _{1}(\beta ),\kappa
_{1}^{\prime }(\beta )\rightarrow 0$ as $\beta \rightarrow \infty $, and%
\begin{equation*}
|x-1|\leq \kappa _{1}(\beta )\ell (x)\mbox{ for }|x-1|\geq \beta ,x\geq 0,%
\mbox{
and }\;x\leq \kappa _{1}^{\prime }(\beta )\ell (x)\mbox{ for 
}x\geq \beta .
\end{equation*}

\item[(c)] For each $\beta >0$, there exists $\kappa _{2}(\beta )\in
(0,\infty )$ such that 
\begin{equation*}
|x-1|^{2}\leq \kappa _{2}(\beta )\ell (x)\quad \mbox{ for }|x-1|\leq \beta
,x\geq 0.
\end{equation*}

\item[(d)] There exists $\kappa _{3}\in (0,\infty )$ such that 
\begin{equation*}
\ell (x)\leq \kappa _{3}|x-1|^{2},\;\;|\ell (x)-(x-1)^{2}/2|\leq \kappa
_{3}|x-1|^{3}\;\mbox{ for all }x\geq 0.
\end{equation*}
\end{enumerate}
\end{lemma}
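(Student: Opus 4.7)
The plan is to handle each of the four parts by reducing to elementary calculus on the function $\ell$. For part (a), I would use convex duality: a direct maximization shows that the Legendre transform of $\ell$ is $\ell^{*}(p)=\sup_{b\geq 0}(pb-\ell(b))=e^{p}-1$, attained at $b=e^{p}$. By Fenchel's inequality $pb\leq \ell(b)+e^{p}-1\leq \ell(b)+e^{p}$ for all $p\in\mathbb{R}$ and $b\geq 0$. Setting $p=\sigma a$ and dividing by $\sigma\geq 1$ gives $ab\leq e^{\sigma a}/\sigma+\ell(b)/\sigma\leq e^{\sigma a}+\ell(b)/\sigma$, which is the claim.

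For parts (b) and (c) the strategy is the same. Introduce the ratios $g_{1}(x)=|x-1|/\ell(x)$, $g_{2}(x)=x/\ell(x)$, and $h(x)=(x-1)^{2}/\ell(x)$ on $[0,\infty)\setminus\{1\}$. Using $\ell(0)=1$, $\ell(x)\sim(x-1)^{2}/2$ as $x\to 1$, and $\ell(x)\sim x\log x$ as $x\to\infty$, I would check that each ratio extends continuously to $[0,\infty]$: $g_{1}(0)=1$, $g_{1}(1)=0$, $g_{1}(\infty)=0$; $g_{2}(\infty)=0$; and $h(1)=2$, $h(0)=1$, $h(\infty)=0$. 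For (b), since $g_{1}$ and $g_{2}$ vanish at infinity, $\kappa_{1}(\beta)=\sup_{|x-1|\geq\beta}g_{1}(x)$ and $\kappa_{1}'(\beta)=\sup_{x\geq\beta}g_{2}(x)$ tend to $0$ as $\beta\to\infty$ (note for $\beta\geq 1$ the set $\{x\geq 0:|x-1|\geq\beta\}=[1+\beta,\infty)$). For (c), continuity of $h$ on the compact set $[\max(0,1-\beta),1+\beta]$ yields a finite $\kappa_{2}(\beta)$.

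For part (d), I would set $\phi(x)=\ell(x)-(x-1)^{2}/2$. A direct computation gives $\phi(1)=\phi'(1)=\phi''(1)=0$ while $\phi'''(x)=-1/x^{2}$, so Taylor's theorem with remainder gives $\phi(x)=O(|x-1|^{3})$ near $x=1$; more precisely the ratio $\phi(x)/|x-1|^{3}$ extends continuously to $[0,\infty)$ with limit $1/2$ at $x=0$ and limit $0$ as $x\to\infty$ (since $\phi(x)\sim -x^{2}/2$ there). Hence $\sup_{x\geq 0}|\phi(x)|/|x-1|^{3}<\infty$. The first inequality $\ell(x)\leq \kappa_{3}(x-1)^{2}$ follows the same way from boundedness of $\ell(x)/(x-1)^{2}$ on $[0,\infty)$, taking $\kappa_{3}$ large enough to dominate both ratios.

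I do not expect any serious obstacle: the only mildly delicate points are the value $\ell(0)=1$ (using the convention $0\log 0=0$), which forces the estimate in (b) to be stated in terms of $\beta\to\infty$ on $[1+\beta,\infty)$ rather than expecting $\kappa_{1}(\beta)\to 0$ at $\beta\to 0$, and the verification that the ratio in (d) does not blow up at the origin despite the presence of $x\log x$. Both are handled by the explicit boundary limits above.
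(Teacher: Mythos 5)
The paper states this lemma without proof, treating it as a list of elementary facts about $\ell(r)=r\log r-r+1$, so there is no argument of the authors' to compare against; your calculus/convex-duality route is the natural one and is correct in substance. Part (a) is exactly Fenchel's inequality for the pair $(\ell,\ell^{*})$ with $\ell^{*}(p)=e^{p}-1$, and parts (b)--(d) do reduce to boundedness of the indicated ratios on the indicated sets. Three of your stated boundary values need correction, however. First, $g_{1}$ does not extend continuously to $x=1$ with value $0$: since $\ell(x)\sim(x-1)^{2}/2$ near $1$, one has $g_{1}(x)=|x-1|/\ell(x)\sim 2/|x-1|\to\infty$ there. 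This is harmless because the supremum defining $\kappa_{1}(\beta)$ is taken over $\{x\ge 0:|x-1|\ge\beta\}$, which stays away from $1$, but you should not assert continuity on all of $[0,\infty]$. Second, $h(\infty)=+\infty$, not $0$, since $(x-1)^{2}/\ell(x)\sim x/\log x$; again harmless for (c), whose supremum is over a compact set, and indeed this divergence is precisely what makes $\ell(x)/(x-1)^{2}\to 0$ at infinity, which you need for the first inequality in (d) --- as written, your claim about $h$ at infinity contradicts your later claim that $\ell(x)/(x-1)^{2}$ is bounded. Third, your formula $\kappa_{1}'(\beta)=\sup_{x\ge\beta}x/\ell(x)$ equals $+\infty$ whenever $\beta\le 1$, because $\ell(1)=0$ while $x=1>0$; the second inequality in (b) can therefore only hold with a finite constant when $\beta>1$. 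That restriction is a defect of the statement as printed rather than of your argument (no proof can avoid it), but it should be recorded explicitly, especially since the constant $\kappa_{1}'(1)$ is invoked later in the paper. With these limit values corrected and the restriction $\beta>1$ noted for $\kappa_{1}'$, your proof is complete.
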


Note that we can assume without loss that $\kappa_{2}(\beta)$ is
nonincreasing in $\beta$. The following result is immediate from Lemma \ref%
{sineqs}.

\begin{lemma}
\label{sineqs2} Suppose $\varphi \in \mathcal{S}_{+,\varepsilon }^{M}$ for
some $M<\infty $, where $\mathcal{S}_{+,\varepsilon }^{M}$ is defined in (%
\ref{ctrlclass}). Let $\psi =\frac{\varphi -1}{a(\varepsilon )}$. Then for
all $\beta >0$

\begin{enumerate}
\item[(a)] $\displaystyle{\int_{\mathbb{X}\times \lbrack 0,T]}|\psi
|1_{\{|\psi |\geq \beta /a(\varepsilon )\}}d\nu _{T}\leq Ma(\varepsilon
)\kappa _{1}(\beta )}$

\item[(b)] $\displaystyle{\int_{\mathbb{X}\times \lbrack 0,T]}\varphi
1_{\{\varphi \geq \beta \}}d\nu _{T}\leq Ma^{2}(\varepsilon )\kappa
_{1}^{\prime }(\beta )}$

\item[(c)] $\displaystyle{\int_{\mathbb{X}\times \lbrack 0,T]}|\psi
|^{2}1_{\{|\psi |\leq \beta /a(\varepsilon )\}}d\nu _{T}\leq M\kappa
_{2}(\beta ),}$
\end{enumerate}

where $\kappa_{1}, \kappa^{\prime}_{1}$ and $\kappa_{2}$ are as in Lemma \ref%
{sineqs}.
\end{lemma}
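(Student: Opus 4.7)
The plan is to derive all three bounds directly by combining the pointwise inequalities of Lemma \ref{sineqs} with the hypothesis $L_T(\varphi) = \int_{\mathbb{X}\times[0,T]} \ell(\varphi)\,d\nu_T \leq M a^2(\varepsilon)$. The key observation is that the events appearing in each indicator translate cleanly between statements about $\psi$ and statements about $\varphi - 1$, so each Lemma \ref{sineqs} inequality applies on exactly the right set.

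For part (a), note that $\{|\psi| \geq \beta/a(\varepsilon)\} = \{|\varphi - 1| \geq \beta\}$, so by Lemma \ref{sineqs}(b), $|\varphi - 1| \leq \kappa_1(\beta)\,\ell(\varphi)$ on this set; dividing by $a(\varepsilon)$, integrating, and applying the cost bound gives
\[
\int_{\mathbb{X}\times[0,T]} |\psi|\,1_{\{|\psi|\geq \beta/a(\varepsilon)\}}\,d\nu_T \;\leq\; \frac{\kappa_1(\beta)}{a(\varepsilon)}\,L_T(\varphi) \;\leq\; M a(\varepsilon)\kappa_1(\beta).
\]
Part (b) is the same argument applied to the second inequality in Lemma \ref{sineqs}(b): on $\{\varphi \geq \beta\}$, $\varphi \leq \kappa_1'(\beta)\,\ell(\varphi)$, so integration yields $\int \varphi\,1_{\{\varphi \geq \beta\}}\,d\nu_T \leq \kappa_1'(\beta)\,L_T(\varphi) \leq M a^2(\varepsilon)\kappa_1'(\beta)$.

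For part (c), the event $\{|\psi| \leq \beta/a(\varepsilon)\}$ coincides with $\{|\varphi - 1| \leq \beta\}$, where Lemma \ref{sineqs}(c) gives $|\varphi - 1|^2 \leq \kappa_2(\beta)\,\ell(\varphi)$. Dividing by $a^2(\varepsilon)$ and integrating,
\[
\int_{\mathbb{X}\times[0,T]} |\psi|^2\,1_{\{|\psi|\leq \beta/a(\varepsilon)\}}\,d\nu_T \;\leq\; \frac{\kappa_2(\beta)}{a^2(\varepsilon)}\,L_T(\varphi) \;\leq\; M\kappa_2(\beta).
\]

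There is essentially no obstacle here; each bound is a one-line consequence of the relevant pointwise estimate and the definition of $\mathcal{S}_{+,\varepsilon}^M$. The only thing worth flagging is the translation between the $\psi$-indicators and $\varphi$-indicators, which must be done with care to ensure the correct inequality from Lemma \ref{sineqs} applies on each set.
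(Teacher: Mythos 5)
Your proof is correct and is precisely the argument the paper intends (the paper simply declares the lemma ``immediate from Lemma \ref{sineqs}''): rewrite each indicator event in terms of $\varphi-1$, apply the corresponding pointwise bound from Lemma \ref{sineqs}, and integrate against the cost bound $L_T(\varphi)\leq Ma^2(\varepsilon)$, using the nonnegativity of $\ell$ to restrict the integral to the relevant set. No gaps.
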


The property that $I$ defined in \eqref{eq:first_rate} is a rate function is
immediate on observing that Condition \ref{gencond}(a) says that $\Gamma
_{K}=\{\mathcal{G}_{0}(g):g\in B_{2}(K)\}$ is compact for all $K<\infty $,
and therefore for every $M<\infty $, $\{\eta \in \mathbb{U}:I(\eta )\leq
M\}=\cap _{n\geq 1}\Gamma _{2M+1/n}$ is compact as well.

To prove Theorem \ref{LDPpoi} it suffices to show that the Laplace principle
lower and upper bounds hold for all $F\in C_{b}(\mathbb{U})$. Let ${\mathcal{%
G}}^{\varepsilon }$ be as in the statement of Theorem \ref{LDPpoi}. Then it
follows from Theorem \ref{VR: PRM} with $\theta =\varepsilon ^{-1}$ and $%
F(\cdot )$ there replaced by $F\circ \mathcal{G}^{\varepsilon }(\varepsilon
\cdot )/b(\varepsilon )$ that 
\begin{equation}
-b(\varepsilon )\log \mathbb{\bar{E}}[e^{-F(Y^{\varepsilon })/b(\varepsilon
)}]=\inf_{\varphi \in \bar{\mathcal{A}}_{+}}\mathbb{\bar{E}}\left[
b(\varepsilon )\varepsilon ^{-1}L_{T}(\varphi )+F\circ \mathcal{G}%
^{\varepsilon }(\varepsilon N^{\varepsilon ^{-1}\varphi })\right] .
\label{eq:finite_dim_rep}
\end{equation}%
We first prove the lower bound 
\begin{equation}
\liminf_{\varepsilon \rightarrow 0}-b(\varepsilon )\log \mathbb{\bar{E}}%
[e^{-F(Y^{\varepsilon })/b(\varepsilon )}]\geq \inf_{\eta \in \mathbb{U}}%
\left[ I(\eta )+F(\eta )\right] .  \label{eq:finite_dim_LB}
\end{equation}%
For $\varepsilon \in (0,1)$, choose $\tilde{\varphi}^{\varepsilon }\in \bar{%
\mathcal{A}}_{b}$ such that 
\begin{equation}
-b(\varepsilon )\log \mathbb{\bar{E}}[e^{-F(Y^{\varepsilon })/b(\varepsilon
)}]\geq \mathbb{\bar{E}}\left[ b(\varepsilon )\varepsilon ^{-1}L_{T}(\tilde{%
\varphi}^{\varepsilon })+F\circ \mathcal{G}^{\varepsilon }(\varepsilon
N^{\varepsilon ^{-1}\tilde{\varphi}^{\varepsilon }})\right] -\varepsilon .
\label{eq:finite_dim_nearly_opt}
\end{equation}%
Since $\Vert F\Vert _{\infty }\equiv \sup_{x\in \mathbb{U}}|F(x)|<\infty $,
we have for all $\varepsilon \in (0,1)$ that 
\begin{equation}
\tilde{M}\doteq (2\Vert F\Vert _{\infty }+1)\geq \mathbb{\bar{E}}\left[ 
\frac{b(\varepsilon )}{\varepsilon }L_{T}(\tilde{\varphi}^{\varepsilon })%
\right] .  \label{eq:bound_on_cost}
\end{equation}%
Fix $\delta >0$ and define 
\begin{equation*}
\tau ^{\varepsilon }=\inf \{t\in \lbrack 0,T]:b(\varepsilon )\varepsilon
^{-1}L_{t}(\tilde{\varphi}^{\varepsilon })>2\tilde{M}\Vert F\Vert _{\infty
}/\delta \}\wedge T.
\end{equation*}%
Let 
\begin{equation*}
\varphi ^{\varepsilon }(y,s)=\tilde{\varphi}^{\varepsilon }(y,s)1_{\{s\leq
\tau ^{\varepsilon }\}}+1_{\{s>\tau ^{\varepsilon }\}},\;(y,s)\in \mathbb{X}%
\times \lbrack 0,T].
\end{equation*}%
Observe that $\varphi ^{\varepsilon }\in \bar{\mathcal{A}}_{b}$ and $%
b(\varepsilon )\varepsilon ^{-1}L_{T}(\varphi ^{\varepsilon })\leq M\doteq 2%
\tilde{M}\Vert F\Vert _{\infty }/\delta $. Also, 
\begin{align}
\mathbb{\bar{P}}\left\{ \varphi ^{\varepsilon }\neq \tilde{\varphi}%
^{\varepsilon }\right\} & \leq \mathbb{\bar{P}}\left\{ b(\varepsilon
)\varepsilon ^{-1}L_{T}(\tilde{\varphi}^{\varepsilon })>M\right\}  \notag \\
& \leq \mathbb{\bar{E}}[b(\varepsilon )\varepsilon ^{-1}L_{T}(\tilde{\varphi}%
^{\varepsilon })]/M  \notag \\
& \leq \frac{\delta }{2\Vert F\Vert _{\infty }}  \label{eq:eq332}
\end{align}%
where the last inequality holds by (\ref{eq:bound_on_cost}). For $(y,s)\in 
\mathbb{X}\times \lbrack 0,T]$ define%
\begin{equation*}
\tilde{\psi}^{\varepsilon }(y,s)\equiv \frac{\tilde{\varphi}^{\varepsilon
}(y,s)-1}{a(\varepsilon )},\quad \psi ^{\varepsilon }(y,s)\equiv \frac{%
\varphi ^{\varepsilon }(y,s)-1}{a(\varepsilon )}=\tilde{\psi}^{\varepsilon
}(y,s)1_{\{s\leq \tau ^{\varepsilon }\}}.
\end{equation*}%
Fix $\beta \in (0,1]$ and let $B_{\varepsilon }=\beta /a(\varepsilon )$.
Applying Lemma \ref{sineqs}(d), Lemma \ref{sineqs2}(c), using $\kappa
_{2}(1)\geq \kappa _{2}(\beta )$ and (\ref{eq:finite_dim_nearly_opt}), we
have that 
\begin{align}
-b(\varepsilon )\log \mathbb{\bar{E}}[e^{-F(Y^{\varepsilon })/b(\varepsilon
)}]& \geq \mathbb{\bar{E}}\left[ \frac{b(\varepsilon )}{\varepsilon }\int_{%
\mathbb{X}\times \lbrack 0,T]}\ell (\tilde{\varphi}^{\varepsilon })d\nu
_{T}+F\circ \mathcal{G}^{\varepsilon }(\varepsilon N^{\varepsilon ^{-1}%
\tilde{\varphi}^{\varepsilon }})\right] -\varepsilon  \notag \\
& \geq \mathbb{\bar{E}}\left[ \frac{b(\varepsilon )}{\varepsilon }\int_{%
\mathbb{X}\times \lbrack 0,T]}\ell ({\varphi }^{\varepsilon })1_{\{|{\psi }%
^{\varepsilon }|\leq B_{\varepsilon }\}}d\nu _{T}+F\circ \mathcal{G}%
^{\varepsilon }(\varepsilon N^{\varepsilon ^{-1}\tilde{\varphi}^{\varepsilon
}})\right] -\varepsilon  \notag \\
& \geq \mathbb{\bar{E}}\left[ \frac{1}{2}\int_{\mathbb{X}\times \lbrack
0,T]}\left( ({\psi }^{\varepsilon })^{2}-\kappa _{3}a(\varepsilon )|{\psi }%
^{\varepsilon }|^{3}\right) 1_{\{|{\psi }^{\varepsilon }|\leq B_{\varepsilon
}\}}d\nu _{T}+F\circ \mathcal{G}^{\varepsilon }(\varepsilon N^{\varepsilon
^{-1}\varphi ^{\varepsilon }})\right]  \notag \\
& \hspace{0.4cm}+\mathbb{\bar{E}}\left( F\circ \mathcal{G}^{\varepsilon
}(\varepsilon N^{\varepsilon ^{-1}\tilde{\varphi}^{\varepsilon }})-F\circ 
\mathcal{G}^{\varepsilon }(\varepsilon N^{\varepsilon ^{-1}\varphi
^{\varepsilon }})\right) -\varepsilon  \notag \\
& \geq \mathbb{\bar{E}}\left[ \frac{1}{2}\int_{\mathbb{X}\times \lbrack
0,T]}(\psi ^{\varepsilon })^{2}1_{\{|\psi ^{\varepsilon }|\leq
B_{\varepsilon }\}}d\nu _{T}+F\circ \mathcal{G}^{\varepsilon }(\varepsilon
N^{\varepsilon ^{-1}\varphi ^{\varepsilon }})\right]  \notag \\
& \hspace{0.6cm}-\delta -\varepsilon -\frac{1}{2}\beta \kappa _{3}M\kappa
_{2}(1),  \label{eq:nonasym_LB}
\end{align}%
where the last inequality follows from \eqref{eq:eq332} on noting that 
\begin{equation*}
\left\vert \mathbb{\bar{E}}\left( F\circ \mathcal{G}^{\varepsilon
}(\varepsilon N^{\varepsilon ^{-1}\tilde{\varphi}^{\varepsilon }})-F\circ 
\mathcal{G}^{\varepsilon }(\varepsilon N^{\varepsilon ^{-1}\varphi
^{\varepsilon }})\right) \right\vert \leq 2\Vert F\Vert _{\infty }\mathbb{%
\bar{P}}\left\{ \varphi ^{\varepsilon }\neq \tilde{\varphi}^{\varepsilon
}\right\} \leq \delta .
\end{equation*}%
By weak compactness of $B_{2}(r)$ and again using the monotonicity of $%
\kappa _{2}(\beta )$, $\{\psi ^{\varepsilon }1_{\{|\psi ^{\varepsilon }|\leq
\beta /a(\varepsilon )\}}\}$ is a tight family of $B_{2}((M\kappa
_{2}(1))^{1/2})$-valued random variables. Let $\psi $ be a limit point along
a subsequence which we index once more by $\varepsilon $. By a standard
argument by contradiction it suffices to prove (\ref{eq:finite_dim_LB})
along this subsequence. From Condition \ref{cond1}(b), along this
subsequence $\mathcal{G}^{\varepsilon }(\varepsilon N^{\varepsilon
^{-1}\varphi ^{\varepsilon }})$ converges in distribution to $\eta =\mathcal{%
G}_{0}(\psi )$. Hence taking limits in (\ref{eq:nonasym_LB}) along this
subsequence, we have 
\begin{align*}
\liminf_{\varepsilon \rightarrow 0}-b(\varepsilon )\log \mathbb{\bar{E}}%
[e^{-F(Y^{\varepsilon })/b(\varepsilon )}]& \geq \mathbb{\bar{E}}\left[ 
\frac{1}{2}\int_{\mathbb{X}\times \lbrack 0,T]}\psi ^{2}d\nu _{T}+F(\eta )%
\right] -\delta -\frac{\beta }{2}\kappa _{3}M\kappa _{2}(1) \\
& \geq \mathbb{\bar{E}}\left[ I(\eta )+F(\eta )\right] -\delta -\frac{1}{2}%
\beta \kappa _{3}M\kappa _{2}(1) \\
& \geq \inf_{\eta \in \mathbb{U}}\left[ I(\eta )+F(\eta )\right] -\delta -%
\frac{1}{2}\beta \kappa _{3}M\kappa _{2}(1).
\end{align*}%
where the first line is from Fatou's lemma and the second uses the
definition of $I$ in (\ref{eq:first_rate}). Sending $\delta $ and $\beta $
to $0$ we get (\ref{eq:finite_dim_LB}).

To complete the proof we now show the upper bound 
\begin{equation}
\limsup_{\varepsilon \rightarrow 0}-b(\varepsilon )\log \mathbb{\bar{E}}%
[e^{-F(Y^{\varepsilon })/b(\varepsilon )}]\leq \inf_{\eta \in \mathbb{U}}%
\left[ I(\eta )+F(\eta )\right] .  \label{eq:finite_dim_UB}
\end{equation}%
Fix $\delta >0$. Then there exists $\eta \in \mathbb{U}$ such that 
\begin{equation}
I(\eta )+F(\eta )\leq \inf_{\eta \in \mathbb{U}}[I(\eta )+F(\eta )]+\delta
/2.  \label{eq:finite_dim_delta_opt}
\end{equation}%
Choose $\psi \in L^{2}(\nu _{T})$ such that 
\begin{equation}
\frac{1}{2}\int_{\mathbb{X}\times \lbrack 0,T]}|\psi |^{2}d\nu _{T}\leq
I(\eta )+\delta /2,  \label{eq:finite_dim_delta_opt2}
\end{equation}%
where $\eta =\mathcal{G}_{0}(\psi )$. For $\beta \in (0,1]$ define%
\begin{equation*}
\psi ^{\varepsilon }=\psi 1_{\{|\psi |\leq \frac{\beta }{a(\varepsilon )}%
\}},\quad \varphi ^{\varepsilon }=1+a(\varepsilon )\psi ^{\varepsilon }.
\end{equation*}%
From Lemma \ref{sineqs}(d), for every $\varepsilon >0$, 
\begin{align*}
\int_{\mathbb{X}\times \lbrack 0,T]}\ell (\varphi ^{\varepsilon })d\nu _{T}&
\leq \kappa _{3}\int_{\mathbb{X}\times \lbrack 0,T]}(\varphi ^{\varepsilon
}-1)^{2}d\nu _{T} \\
& =a^{2}(\varepsilon )\kappa _{3}\int_{\mathbb{X}\times \lbrack 0,T]}|\psi
^{\varepsilon }|^{2}d\nu _{T} \\
& \leq a^{2}(\varepsilon )M,
\end{align*}%
where $M=\kappa _{3}\int_{\mathbb{X}\times \lbrack 0,T]}|\psi |^{2}d\nu _{T}$%
. Thus $\varphi ^{\varepsilon }\in \mathcal{U}_{+,\varepsilon }^{M}$ for all 
$\varepsilon >0$. Also 
\begin{equation*}
\psi ^{\varepsilon }1_{\{|\psi ^{\varepsilon }|\leq \frac{\beta }{%
a(\varepsilon )}\}}=\psi 1_{\{|\psi |\leq \frac{\beta }{a(\varepsilon )}\}}
\end{equation*}%
which converges to $\psi $ as $\varepsilon \rightarrow 0$. Thus from
Condition \ref{cond1}(b) 
\begin{equation}
\mathcal{G}^{\varepsilon }(\varepsilon N^{\varepsilon ^{-1}\varphi
^{\varepsilon }})\Rightarrow \mathcal{G}_{0}(\psi ).  \label{eq:eq847}
\end{equation}%
Finally, from (\ref{eq:finite_dim_rep}), Lemma \ref{sineqs}(d) and using $%
b(\varepsilon )\varepsilon ^{-1}=1/a(\varepsilon )^{2}$,%
\begin{align*}
-b(\varepsilon )\log \mathbb{\bar{E}}\left[ e^{-F(Y^{\varepsilon
})/b(\varepsilon )}\right] & \leq b(\varepsilon )\varepsilon
^{-1}L_{T}(\varphi ^{\varepsilon })+F\circ \mathcal{G}^{\varepsilon
}(\varepsilon N^{\varepsilon ^{-1}\varphi ^{\varepsilon }}) \\
& \leq \frac{1}{2}\int_{\mathbb{X}\times \lbrack 0,T]}|\psi ^{\varepsilon
}|^{2}\ d\nu _{T}+\kappa _{3}\int_{\mathbb{X}\times \lbrack
0,T]}a(\varepsilon )|\psi ^{\varepsilon }|^{3}\ d\nu _{T}+F\circ \mathcal{G}%
^{\varepsilon }(\varepsilon N^{\varepsilon ^{-1}\varphi ^{\varepsilon }}) \\
& \leq \frac{1}{2}(1+2\kappa _{3}\beta )\int_{\mathbb{X}\times \lbrack
0,T]}|\psi |^{2}\ d\nu _{T}+F\circ \mathcal{G}^{\varepsilon }(\varepsilon
N^{\varepsilon ^{-1}\varphi ^{\varepsilon }}).
\end{align*}%
Taking limits as $\varepsilon \rightarrow 0$ and using \eqref{eq:eq847} we
have 
\begin{equation*}
\limsup_{\varepsilon \rightarrow 0}-b(\varepsilon )\log \mathbb{\bar{E}}%
\left[ e^{-F(Y^{\varepsilon })/b(\varepsilon )}\right] \leq \frac{1}{2}%
(1+2\kappa _{3}\beta )\int |\psi |^{2}d\nu _{T}+F(\eta ).
\end{equation*}%
Sending $\beta \rightarrow 0$ gives 
\begin{align*}
\limsup_{\varepsilon \rightarrow 0}-b(\varepsilon )\log \mathbb{\bar{E}}%
\left[ e^{-F(Y^{\varepsilon })/b(\varepsilon )}\right] & \leq \frac{1}{2}%
\int |\psi |^{2}d\nu _{T}+F(\eta ) \\
& \leq I(\eta )+F(\eta )+\delta /2 \\
& \leq \inf_{\eta \in \mathbb{U}}[I(\eta )+F(\eta )]+\delta ,
\end{align*}%
where the second inequality is from (\ref{eq:finite_dim_delta_opt2}) and the
last inequality follows from (\ref{eq:finite_dim_delta_opt}). Since $\delta
>0$ is arbitrary, this completes the proof of (\ref{eq:finite_dim_UB}).
\hfill $\Box $

\section{Proofs for the Finite Dimensional Problem (Theorem \protect\ref%
{LDPpoifin})}

\label{sect:pfs_finite_dim}From Theorem \ref{pathuniq} we see that there
exists a measurable map $\mathcal{\bar{G}}^{\varepsilon }:\mathbb{M}%
\rightarrow D([0,T]:\mathbb{R}^{d})$ such that $X^{\varepsilon }\equiv 
\mathcal{\bar{G}}^{\varepsilon }(\varepsilon N^{\varepsilon ^{-1}})$, and
hence there is a map $\mathcal{G}^{\varepsilon }$ such that $Y^{\varepsilon
}\equiv \mathcal{G}^{\varepsilon }(\varepsilon N^{\varepsilon ^{-1}})$.
Define $\mathcal{G}_{0}:L^{2}(\nu _{T})\rightarrow C([0,T]:\mathbb{R}^{d})$
by 
\begin{equation}
\mathcal{G}_{0}(\psi )=\eta \mbox{ if for }\psi \in L^{2}(\nu _{T}),\eta 
\mbox{ solves
(\ref{eq:finite_dim_limit}).}  \label{eq:defgnot}
\end{equation}%
In order to prove the theorem we will verify that Condition \ref{cond1}
holds with these choices of $\mathcal{G}^{\varepsilon }$ and $\mathcal{G}%
_{0} $.

We begin by verifying part (a) of the condition.

\begin{lemma}
\label{lem:lem636} Suppose Conditions \ref{cndns} and \ref{cndnsnew} hold.
Fix $M\in(0,\infty)$ and $g^{\varepsilon},g\in B_{2}(M)$ such that $%
g^{\varepsilon }\rightarrow g$. Let $\mathcal{G}_{0}$ be as defined in %
\eqref{eq:defgnot} Then $\mathcal{G}_{0}(g^{\varepsilon})\rightarrow\mathcal{%
G}_{0}(g)$.
\end{lemma}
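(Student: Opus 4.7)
The plan is to write the lemma as a continuity statement about the map $\psi \mapsto \eta$ defined by the linear integral equation \eqref{eq:finite_dim_limit}, and reduce it to Gronwall's inequality after showing that the ``forcing term'' converges uniformly in $t$. Let $\eta^{\varepsilon} = \mathcal{G}_0(g^{\varepsilon})$ and $\eta = \mathcal{G}_0(g)$; both exist and are unique in $C([0,T]:\mathbb{R}^d)$ by the comment following Theorem \ref{LDPpoifin}. Subtracting the two instances of \eqref{eq:finite_dim_limit} gives
\begin{equation*}
(\eta^{\varepsilon}-\eta)(t) = \int_0^t [Db(X^0(s))](\eta^{\varepsilon}-\eta)(s)\,ds + \int_{\mathbb{X}\times[0,t]} [D_x G(X^0(s),y)](\eta^{\varepsilon}-\eta)(s)\,\nu(dy)\,ds + R^{\varepsilon}(t),
\end{equation*}
where $R^{\varepsilon}(t) \doteq \int_{\mathbb{X}\times[0,t]} (g^{\varepsilon}-g)(y,s)\,G(X^0(s),y)\,\nu(dy)\,ds$.

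Next, I would establish that $\sup_{t\in[0,T]}|R^{\varepsilon}(t)| \to 0$. For each fixed $i=1,\dots,d$ and $t\in[0,T]$, the function $(y,s)\mapsto G_i(X^0(s),y)\mathbf{1}_{[0,t]}(s)$ lies in $L^2(\nu_T)$: by Condition \ref{cndns}(c) it is dominated by $M_G(y)(1+m_T)$, and $M_G \in L^2(\nu)$, while $m_T < \infty$ by continuity of $X^0$. Hence pointwise in $t$, $R^{\varepsilon}(t) \to 0$ follows from the weak convergence $g^{\varepsilon} \to g$ in $L^2(\nu_T)$. For uniformity in $t$, I would use equicontinuity: for $0\leq t\leq t'\leq T$, Cauchy--Schwarz gives
\begin{equation*}
|R^{\varepsilon}(t')-R^{\varepsilon}(t)| \leq \|g^{\varepsilon}-g\|_2 \left( \int_t^{t'}\int_{\mathbb{X}} |G(X^0(s),y)|^2 \nu(dy)\,ds\right)^{1/2} \leq 2M (1+m_T)\|M_G\|_{L^2(\nu)}|t'-t|^{1/2}.
\end{equation*}
Together with pointwise convergence on the dense set $\mathbb{Q}\cap[0,T]$ (or all of $[0,T]$), equicontinuity yields uniform convergence $\sup_{t\in[0,T]}|R^{\varepsilon}(t)|\to 0$.

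Finally I would close with Gronwall's inequality. The trajectory $X^0$ stays inside the compact set $\{|x|\leq m_T\}$; by Condition \ref{cndnsnew}(b), $Db$ is continuous (being Lipschitz), hence $\sup_{s\in[0,T]}|Db(X^0(s))|_{op} < \infty$, and $\sup_{s\in[0,T]}\int_{\mathbb{X}} |D_x G(X^0(s),y)|_{op}\,\nu(dy) < \infty$. Denote the sum of these two bounds by $C$. Then
\begin{equation*}
|\eta^{\varepsilon}(t)-\eta(t)| \leq C\int_0^t |\eta^{\varepsilon}(s)-\eta(s)|\,ds + \sup_{s\in[0,T]}|R^{\varepsilon}(s)|,
\end{equation*}
and Gronwall yields $\sup_{t\in[0,T]}|\eta^{\varepsilon}(t)-\eta(t)| \leq e^{CT}\sup_{s\in[0,T]}|R^{\varepsilon}(s)|\to 0$, which is the desired convergence in $C([0,T]:\mathbb{R}^d)$.

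The one genuinely non-routine step is the second paragraph: extracting uniform (in $t$) convergence of $R^{\varepsilon}(\cdot)$ from merely weak $L^2$ convergence of the controls. The Cauchy--Schwarz estimate above, which is uniform in $\varepsilon$ thanks to $\|g^{\varepsilon}\|_2 \leq M$, is what makes the equicontinuity argument go through; everything else is a standard linear-equation stability argument via Gronwall.
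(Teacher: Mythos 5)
Your proof is correct and follows essentially the same route as the paper's: pointwise convergence of the forcing term from the weak $L^2$ convergence of $g^{\varepsilon}$, upgraded to uniform convergence in $t$ via the Cauchy--Schwarz/H\"older-$\tfrac12$ equicontinuity bound with constant controlled by $\|M_G\|_{L^2(\nu)}$ and the $B_2(M)$ bound, then Gronwall. The only difference is that you spell out the Gronwall step (including the needed bounds on $Db(X^0(\cdot))$ and $\int_{\mathbb{X}}|D_xG(X^0(\cdot),y)|_{op}\nu(dy)$ from Condition \ref{cndnsnew}(b)), which the paper leaves implicit.
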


\begin{proof}
Note that from Condition \ref{cndns}(c), $(y,s)\mapsto G(X^{0}(s),y)$ is in $%
L^{2}(\nu_{T})$. Thus, since $g^{\varepsilon}\rightarrow g$, we have for
every $t\in\lbrack0,T]$ 
\begin{equation}
\int_{\mathbb{X}\times\lbrack0,t]}g^{\varepsilon}(y,s)G(X^{0}(s),y)\nu
(dy)ds\rightarrow\int_{\mathbb{X}\times\lbrack0,t]}g(y,s)G(X^{0}(s),y)%
\nu(dy)ds.  \label{eq:eq644}
\end{equation}
We argue that the convergence is in fact uniform in $t$. For that note that
for $0\leq s\leq t\leq T$ 
\begin{align*}
\left\vert \int_{\mathbb{X}\times\lbrack
s,t]}g^{\varepsilon}(y,u)G(X^{0}(u),y)\nu(dy)du\right\vert \leq & \left(
1+\sup_{0\leq u\leq T}|X^{0}(u)|\right) \int_{\mathbb{X}\times\lbrack
s,t]}M_{G}(y)|g^{\varepsilon }(y,u)|\nu(dy)du \\
\leq & \left( 1+\sup_{0\leq u\leq T}|X^{0}(u)|\right)
|t-s|^{1/2}M\|M_{G}\|_{2},
\end{align*}
where abusing notation we have denoted the norm in $L^{2}(\nu)$ as $%
\|.\|_{2} $ as well. This implies equicontinuity, and hence the convergence
in \eqref{eq:eq644} is uniform in $t\in\lbrack0,T]$. The conclusion of the
lemma now follows by an application of Gronwall's lemma.
\end{proof}

In order to verify part (b) of Condition \ref{cond1}, we first prove some a
priori estimates. Recall the spaces $\mathcal{H}$ introduced in (\ref%
{eq:defofH}) and $\mathcal{S}_{+,\varepsilon }^{M}$ in (\ref{ctrlclass}).

\begin{lemma}
\label{Ineq1} Let $h\in L^{2}(\nu )\cap \mathcal{H}$ and let $M\in (0,\infty
)$. Then there exist $\varsigma \in (0,\infty )$ such that for any
measurable $I\subset \lbrack 0,T]$ and for all $\varepsilon >0$, 
\begin{equation*}
\sup_{\varphi \in \mathcal{S}_{+,\varepsilon }^{M}}\int_{\mathbb{X}\times
I}h^{2}(y)\varphi (y,s)\nu (dy)ds\leq \varsigma (a^{2}(\varepsilon )+|I|),
\end{equation*}%
where $|I|=\lambda _{T}(I)$.
\end{lemma}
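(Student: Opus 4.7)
The plan is to combine the exponential Young-type inequality from Lemma~\ref{sineqs}(a) with a dichotomy on the size of $h$, which circumvents the fact that the $\mathcal{H}$-condition only provides exponential integrability on subsets of finite $\nu$-measure. Pick $\delta>0$ witnessing $h\in\mathcal{H}$, and let $A=\{y\in\mathbb{X}:h^2(y)>1\}$. Since $h\in L^2(\nu)$, Chebyshev gives $\nu(A)\leq \|h\|_2^2<\infty$, hence $C_A\doteq\int_A e^{\delta h^2}\,d\nu<\infty$, and we split
\begin{equation*}
\int_{\mathbb{X}\times I} h^2\varphi\,d\nu_T = \int_{A\times I} h^2\varphi\,d\nu_T + \int_{A^c\times I} h^2\varphi\,d\nu_T.
\end{equation*}

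For the integral over $A\times I$, I would apply Lemma~\ref{sineqs}(a) with $a=\delta h^2$, $b=\varphi$, $\sigma=1$ and divide by $\delta$ to obtain $h^2\varphi\leq\delta^{-1}e^{\delta h^2}+\delta^{-1}\ell(\varphi)$. Integrating and using $L_T(\varphi)\leq Ma^2(\varepsilon)$ bounds this piece by $\delta^{-1}(C_A|I|+Ma^2(\varepsilon))$, which is already of the desired form.

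For the integral over $A^c\times I$, the key fact is $h^2\leq 1$ on $A^c$. Writing $\varphi=1+a(\varepsilon)\psi$ with $\psi=(\varphi-1)/a(\varepsilon)$ peels off the contribution $\int_{A^c\times I} h^2\,d\nu_T\leq |I|\|h\|_2^2$. For the remaining term $a(\varepsilon)\int_{A^c\times I} h^2\psi\,d\nu_T$, I would split according to whether $|\psi|\leq 1/a(\varepsilon)$ or not. On the small-$\psi$ part, apply Cauchy--Schwarz together with $h^4\leq h^2$ on $A^c$ and Lemma~\ref{sineqs2}(c) with $\beta=1$, and then use $2xy\leq x^2+y^2$ to absorb the prefactor $a(\varepsilon)\sqrt{|I|}$ into a bound of order $a^2(\varepsilon)+|I|$. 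On the large-$\psi$ part, use $h^2\leq 1$ and apply Lemma~\ref{sineqs2}(a) with $\beta=1$ directly, producing a bound of order $a^2(\varepsilon)$. Summing the two pieces yields the claimed $\varsigma(a^2(\varepsilon)+|I|)$ with $\varsigma$ depending only on $\delta$, $M$, $\|h\|_2$ and $C_A$, but independent of $\varphi$, $I$ and $\varepsilon$.

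The central obstacle is precisely that exponential integrability of $h$ is not global on $\mathbb{X}$, only on finite-measure subsets. The partition $\mathbb{X}=A\cup A^c$ resolves this: the ``big-$h$'' set $A$ has finite $\nu$-measure so the exponential Young bound is usable there, while on $A^c$ the boundedness of $h^2$ lets $h^2$ act as an $L^2$ weight that pairs with $\psi$ through Cauchy--Schwarz and the estimates of Lemma~\ref{sineqs2}.
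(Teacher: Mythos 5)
Your proof is correct and follows essentially the same route as the paper's: the same dichotomy on $\{h^{2}>1\}$ versus its complement, with the exponential Young inequality of Lemma \ref{sineqs}(a) applied on the finite-$\nu$-measure piece and the estimates of Lemma \ref{sineqs2} handling the rest. If anything, your choice $b=\varphi$ (dividing by $\delta$ afterwards) is slightly cleaner than the paper's $b=\varphi/\delta$, which forces an extra expansion of $\ell(\varphi/\delta)$ in terms of $\ell(\varphi)$ and $|\varphi-1|$.
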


\begin{proof}
Fix $\varphi \in \mathcal{S}_{+,\varepsilon }^{M}$ and let $\Gamma
=\{y:|h(y)|\geq 1\}$. Then 
\begin{equation}
\int_{\mathbb{X}\times I}h^{2}\varphi d\nu _{T}=\int_{\Gamma \times
I}h^{2}\varphi d\nu _{T}+\int_{\Gamma ^{c}\times I}h^{2}\varphi d\nu _{T}.
\label{eq:eq726}
\end{equation}%
The second term on the right side can be estimated as 
\begin{align}
\int_{\Gamma ^{c}\times I}h^{2}\varphi d\nu _{T}& \leq \int_{\mathbb{X}%
\times I}\varphi 1_{\left\{ \varphi \geq 1\right\} }d\nu _{T}+\int_{\mathbb{X%
}\times I}h^{2}d\nu _{T}  \notag \\
& \leq Ma^{2}(\varepsilon )\kappa _{1}^{\prime }(1)+|I\Vert |h\Vert _{2}^{2},
\label{eq:eq839}
\end{align}%
where the second inequality follows from Lemma \ref{sineqs2}(b). Consider
now the first term on the right side of \eqref{eq:eq726}. Note that $\nu
(\Gamma )\leq \Vert h\Vert _{2}^{2}<\infty .$ Let $\delta $ be as in the
definition of $\mathcal{H}$ and let $M(\delta )=\int_{\Gamma }e^{\delta
h^{2}(y)}\nu (dy)$. Then, applying the inequality in Lemma \ref{sineqs}(a)
with $\sigma =1$, $a=\delta h^{2}$ and $b=\varphi /\delta $, we have 
\begin{align}
\int_{\Gamma \times I}h^{2}(y)\varphi (y,s)\nu (dy)ds& \leq |I|\int_{\Gamma
}e^{\delta h^{2}(y)}\nu (dy)+\int_{\Gamma \times I}\ell (\varphi
(y,s)/\delta )\nu (dy)ds  \notag \\
& \leq M(\delta )|I|+\int_{\Gamma \times I}\ell (\varphi (y,s)/\delta )\nu
(dy)ds.  \label{eq:eq1121}
\end{align}%
Note that for $x\geq 0$ 
\begin{align}
\ell \left( \frac{x}{\delta }\right) & =\frac{x}{\delta }\log \left( \frac{x%
}{\delta }\right) -\frac{x}{\delta }+1  \notag \\
& =\frac{1}{\delta }\ell (x)+\frac{\delta -1}{\delta }-\frac{(x-1)}{\delta }%
\log \delta -\frac{\log \delta }{\delta }  \notag \\
& \leq \frac{1}{\delta }\ell (x)+|x-1|\frac{|\log \delta |}{\delta }+\frac{%
\delta +|\log \delta |}{\delta }.  \notag
\end{align}%
Thus 
\begin{equation}
\int_{\Gamma \times I}\ell (\varphi (y,s)/\delta )\nu (dy)ds\leq \frac{M}{%
\delta }a^{2}(\varepsilon )+c_{1}(\delta )a(\varepsilon )\int_{\Gamma \times
I}|\psi (y,s)|\nu (dy)ds+c_{2}(\delta )\nu (\Gamma )|I|,
\label{eq:split_of_cost}
\end{equation}%
where $c_{1}(\delta )=\frac{|\log \delta |}{\delta }$, $c_{2}(\delta )=\frac{%
\delta +|\log \delta |}{\delta }$ and $\psi =(\varphi -1)/a(\varepsilon )$.
Finally 
\begin{align}
a(\varepsilon )\int_{\Gamma \times I}|\psi |d\nu _{T}& =a(\varepsilon
)\int_{\Gamma \times I}|\psi |1_{\left\{ |\psi |\geq 1/a(\varepsilon
)\right\} }d\nu _{T}+a(\varepsilon )\int_{\Gamma \times I}|\psi |1_{\left\{
|\psi |<1/a(\varepsilon )\right\} }d\nu _{T}  \notag \\
& \leq Ma^{2}(\varepsilon )\kappa _{1}(1)+a(\varepsilon )|I|^{1/2}\sqrt{\nu
(\Gamma )}\left( \int |\psi |^{2}1_{\left\{ |\psi |<1/a(\varepsilon
)\right\} }d\nu _{T}\right) ^{1/2}  \notag \\
& \leq Ma^{2}(\varepsilon )\kappa _{1}(1)+a(\varepsilon )|I|^{1/2}\sqrt{\nu
(\Gamma )}(M\kappa _{2}(1))^{1/2},  \label{eq:eq121b}
\end{align}%
where the first inequality follows from Lemma \ref{sineqs2} (a) while the
second uses part (c) of the same lemma. The result now follows by combining %
\eqref{eq:eq1121} and \eqref{eq:eq839} with the last two displays and using $%
a(\varepsilon )|I|^{1/2}\leq (a(\varepsilon )^{2}+|I|)/2$.
\end{proof}

\begin{lemma}
\label{Ineq2} Let $h\in L^{2}(\nu)\cap\mathcal{H}$ and $I$ be a measurable
subset of $[0,T]$. Let $M\in(0,\infty)$. Then there exist maps $\vartheta
,\rho:(0,\infty)\rightarrow(0,\infty)$ such that $\vartheta(u)\downarrow 0$
as $u\uparrow\infty$, and for all $\varepsilon,\beta\in(0,\infty)$, 
\begin{equation*}
\sup_{\psi\in\mathcal{S}_{\varepsilon}^{M}}\int_{\mathbb{X}\times
I}|h(y)\psi(y,s)|1_{\{|\psi|\geq\beta/a(\varepsilon)\}}\nu(dy)ds\leq
\vartheta(\beta)(1+|I|^{1/2}),
\end{equation*}
and 
\begin{equation*}
\sup_{\psi\in\mathcal{S}_{\varepsilon}^{M}}\int_{\mathbb{X}\times
I}|h(y)\psi(y,s)|\nu(dy)ds\leq\rho(\beta)|I|^{1/2}+\vartheta(\beta
)a(\varepsilon).
\end{equation*}
\end{lemma}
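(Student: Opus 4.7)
The plan is to prove the two inequalities in turn, using the first as an ingredient for the second. Both arguments proceed by splitting the integration region: the first according to whether $|h|$ is small or large (and in the large case, whether $\varphi-1$ is large positive or large negative); the second according to whether $|\psi|$ is small or large, with Cauchy–Schwarz handling the small-$\psi$ part and the first inequality handling the large-$\psi$ part.

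For inequality (i), set $\Gamma=\{|h|\ge 1\}$. On $\Gamma^c$, the bound $|h\psi|\le|\psi|$ combined with Lemma \ref{sineqs2}(a) immediately gives a contribution of size $Ma(\varepsilon)\kappa_1(\beta)$. On $\Gamma$, decompose the event $\{|\psi|\ge\beta/a(\varepsilon)\}$ as $\{\varphi\ge 1+\beta\}\cup\{\varphi\le 1-\beta\}$, the latter being vacuous for $\beta>1$. In the first piece I bound $|h\psi|\le|h|\varphi/a(\varepsilon)$ and apply Cauchy–Schwarz in the factorization $|h|\varphi=(|h|\sqrt{\varphi})\sqrt{\varphi}$: Lemma \ref{Ineq1} controls $\int h^2\varphi\,d\nu_T\le\varsigma(a^2(\varepsilon)+|I|)$, while Lemma \ref{sineqs2}(b) controls $\int\varphi\,1_{\{\varphi\ge 1+\beta\}}\,d\nu_T\le Ma^2(\varepsilon)\kappa_1'(1+\beta)$; the geometric mean divided by $a(\varepsilon)$ yields $\sqrt{\varsigma M\kappa_1'(1+\beta)}\,(a(\varepsilon)+|I|^{1/2})$. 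In the second piece I use $|h\psi|\le|h|/a(\varepsilon)$, Cauchy–Schwarz, and the Chebyshev estimate $\nu_T\{\varphi\le 1-\beta\}\le Ma^2(\varepsilon)/\ell(1-\beta)$, which absorbs the $1/a(\varepsilon)$ and produces $\|h\|_2\sqrt{M/\ell(1-\beta)}\,|I|^{1/2}$. Summing the three contributions and using $a(\varepsilon)\le\sup_\varepsilon a(\varepsilon)<\infty$ to compare with $(1+|I|^{1/2})$ gives the required bound, with $\vartheta(\beta)\downarrow 0$ since $\kappa_1(\beta)$, $\kappa_1'(1+\beta)\to 0$ and the third piece vanishes for $\beta>1$.

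For inequality (ii), split $\int|h\psi|\,d\nu_T$ at the level $|\psi|=\beta/a(\varepsilon)$. On the bounded part, Cauchy–Schwarz against $L^2(\nu_T)$ together with Lemma \ref{sineqs2}(c) yields $\|h\|_2\sqrt{M\kappa_2(\beta)}\,|I|^{1/2}$, which is the $\rho(\beta)|I|^{1/2}$ contribution. On the unbounded part I apply inequality (i), but in the sharper form $\vartheta(\beta)(a(\varepsilon)+|I|^{1/2})$ that the argument above actually produces before any collapsing of $a(\varepsilon)$ into a constant. Writing this as $\vartheta(\beta)a(\varepsilon)+\vartheta(\beta)|I|^{1/2}$ and absorbing the second summand into a redefined $\rho(\beta)$ gives the claimed bound $\rho(\beta)|I|^{1/2}+\vartheta(\beta)a(\varepsilon)$.

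The main obstacle is the $\Gamma\cap\{\varphi\ge 1+\beta\}$ estimate in (i): $|h|$ can be arbitrarily large and $\varphi$ is controlled only in the nonlinear sense $L_T(\varphi)\le Ma^2(\varepsilon)$, so neither a sup-norm bound nor a bare $L^2$ Cauchy–Schwarz produces the required $\beta$-decay uniformly in $\varepsilon$. The resolution is to first use $|h|\le h^2$ on $\Gamma$, which allows the exponential integrability $h\in\mathcal{H}$ to be brought in through Lemma \ref{Ineq1}, and then to pair this with the $\varphi$-tail bound of Lemma \ref{sineqs2}(b); the two bounds each carry a factor of $a(\varepsilon)$, their geometric mean yields exactly one factor of $a(\varepsilon)$, and this cancels against the $1/a(\varepsilon)$ in front of $|h|\varphi$, leaving a prefactor of the form $\sqrt{\kappa_1'(1+\beta)}$ that provides the requisite decay in $\beta$.
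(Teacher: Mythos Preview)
Your argument is correct, but the paper's proof of the first inequality is considerably more direct. Instead of splitting on $\Gamma=\{|h|\ge 1\}$ and then on the sign of $\varphi-1$, the paper applies a single Cauchy--Schwarz factorization
\[
\int_{\mathbb{X}\times I}|h\psi|\,1_{\{|\psi|\ge\beta/a(\varepsilon)\}}\,d\nu_T
\le\left(\int_{\mathbb{X}\times I}h^2|\psi|\,d\nu_T\right)^{1/2}
\left(\int_{\mathbb{X}\times I}|\psi|\,1_{\{|\psi|\ge\beta/a(\varepsilon)\}}\,d\nu_T\right)^{1/2}.
\]
The second factor is handled by Lemma~\ref{sineqs2}(a), giving $Ma(\varepsilon)\kappa_1(\beta)$; for the first factor one writes $a(\varepsilon)|\psi|=|\varphi-1|\le 1+\varphi$, so that $\int h^2|\psi|\le a(\varepsilon)^{-1}\bigl(|I|\,\|h\|_2^2+\int h^2\varphi\,d\nu_T\bigr)$, and then Lemma~\ref{Ineq1} bounds $\int h^2\varphi$. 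The factors of $a(\varepsilon)$ cancel exactly, and the entire $\beta$-decay comes from the single constant $\kappa_1(\beta)$. This avoids your three-way case split, the separate Chebyshev estimate for $\{\varphi\le 1-\beta\}$, and the appeal to $\kappa_1'(1+\beta)$; it also sidesteps the mild awkwardness in your third piece, whose coefficient $\sqrt{M/\ell(1-\beta)}$ blows up as $\beta\downarrow 0$ (though this is harmless for the stated conclusion once one passes to a monotone envelope). Your treatment of the second inequality---Cauchy--Schwarz on $\{|\psi|<\beta/a(\varepsilon)\}$ via Lemma~\ref{sineqs2}(c), then the first inequality on the complement---matches the paper's exactly.
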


\begin{proof}
Let $\psi \in \mathcal{S}_{\varepsilon }^{M}$ and $\beta \in (0,\infty )$.
Then 
\begin{align}
\int_{\mathbb{X}\times I}|h(y)\psi (y,s)|\nu (dy)ds& \leq \int_{\mathbb{X}%
\times I}|h(y)\psi (y,s)|1_{\{|\psi |<\beta /a(\varepsilon )\}}\nu (dy)ds 
\notag \\
& \hspace{0.3cm}+\int_{\mathbb{X}\times I}|h(y)\psi (y,s)|1_{\{|\psi |\geq
\beta /a(\varepsilon )\}}\nu (dy)ds.  \label{split}
\end{align}%
By the Cauchy-Schwarz inequality and Lemma \ref{sineqs2}(c) 
\begin{align}
\int_{\mathbb{X}\times I}|h(y)\psi (y,s)|1_{\{|\psi |<\beta /a(\varepsilon
)\}}\nu (dy)ds& \leq \left( |I|\int_{\mathbb{X}}h^{2}(y)\nu (dy)\int_{%
\mathbb{X}\times I}\psi ^{2}1_{\{|\psi |<\beta /a(\varepsilon )\}}\nu
(dy)ds\right) ^{1/2}  \notag \\
& \leq \Vert h\Vert _{2}(M\kappa _{2}(\beta ))^{1/2}|I|^{1/2}.
\label{split1}
\end{align}%
Let $\varphi =1+a(\varepsilon )\psi $, and note that $\varphi \in \mathcal{S}%
_{+,\varepsilon }^{M}$. For the second term in \eqref{split}, another
application of the Cauchy-Schwarz inequality and Lemma \ref{sineqs2}(a) give 
\begin{align}
\int_{\mathbb{X}\times I}|h(y)\psi (y,s)|1_{\{|\psi |\geq \beta
/a(\varepsilon )\}}\nu (dy)ds& \leq \left( \int_{\mathbb{X}\times
I}h^{2}(y)|\psi (y,s)|\nu (dy)ds\int_{\mathbb{X}\times I}|\psi
(y,s)|1_{\{|\psi |\geq \beta /a(\varepsilon )\}}\nu (dy)ds\right) ^{1/2} 
\notag \\
& \leq \left( Ma(\varepsilon )\kappa _{1}(\beta )\int_{\mathbb{X}\times
I}h^{2}(y)|\psi (y,s)|\nu (dy)ds\right) ^{1/2}  \notag \\
& =\left( M\kappa _{1}(\beta )\int_{\mathbb{X}\times I}h^{2}(y)|\varphi
(y,s)-1|\nu (dy)ds\right) ^{1/2}  \notag \\
& \leq \left( M\kappa _{1}(\beta )\left( |I|\Vert h\Vert _{2}^{2}+\int_{%
\mathbb{X}\times I}h^{2}\varphi d\nu _{T}\right) \right) ^{1/2}
\label{split2} \\
& \leq (M\kappa _{1}(\beta ))^{1/2}\left( \Vert h\Vert _{2}^{2}|I|+\varsigma
(a^{2}(\varepsilon )+|I|)\right) ^{1/2},  \label{split2n}
\end{align}%
where the last inequality is obtained by an application of Lemma \ref{Ineq1}%
. Recall from Lemma \ref{sineqs} that $\kappa _{1}(\beta )\overset{\beta
\rightarrow \infty }{\rightarrow }0$. The first statement in the lemma is
immediate from \eqref{split2n} while the second follows by adding %
\eqref{split1} and \eqref{split2n}.
\end{proof}

Recalling the definition of $\mathcal{U}_{+,\varepsilon }^{M}$ in (\ref%
{eq:def_of_Us}), we note that for every $\varphi \in \mathcal{U}%
_{+,\varepsilon }^{M}$ the integral equation 
\begin{equation}
d\bar{X}^{\varepsilon ,\varphi }(s)=b(\bar{X}^{\varepsilon ,\varphi
}(s))ds+\int_{\mathbb{X}_{T}}\varepsilon G(\bar{X}^{\varepsilon ,\varphi
}(s-),y)\ N^{\varepsilon ^{-1}\varphi }(ds,dy)  \label{eq:eq1152}
\end{equation}%
has a unique pathwise solution. Indeed, let $\tilde{\varphi}=1/\varphi $,
and recall that $\varphi \in \mathcal{U}_{+,\varepsilon }^{M}$ means that $%
\varphi =1$ off some compact set in $y$ and bounded above and below away
from zero on the compact set. Then it is easy to check (see Theorem III.3.24
of \cite{JacShi}, see also Lemma 2.3 of \cite{BDM09}) that 
\begin{equation*}
\mathcal{E}_{t}^{\varepsilon }(\tilde{\varphi})=\exp \left\{ \int_{\mathbb{(}%
0,t]\times \mathbb{X}\times \lbrack 0,\varepsilon ^{-1}\varphi ]}\log (%
\tilde{\varphi})d\bar{N}+\int_{\mathbb{(}0,t]\times \mathbb{X}\times \lbrack
0,\varepsilon ^{-1}\varphi ]}\left( -\tilde{\varphi}+1\right) d\bar{\nu}%
_{T}\right\}
\end{equation*}%
is an $\left\{ \bar{\mathcal{F}}_{t}\right\} $-martingale and consequently 
\begin{equation*}
\mathbb{Q}_{T}^{\varepsilon }(G)=\int_{G}\mathcal{E}_{T}^{\varepsilon }(%
\tilde{\varphi})d\mathbb{\bar{P}},\quad \text{ for }G\in \mathcal{B(}\mathbb{%
\bar{M}}\mathcal{\mathbb{\mathcal{)}}}
\end{equation*}%
defines a probability measure on $\mathbb{\bar{M}}$. Furthermore $\mathbb{%
\bar{P}}$ and $\mathbb{Q}_{T}^{\varepsilon }$ are mutually absolutely
continuous. Also it can be verified that under $\mathbb{Q}_{T}^{\varepsilon
} $, $\varepsilon N^{\varepsilon ^{-1}\varphi }$ has the same law as that of 
$\varepsilon N^{\varepsilon ^{-1}}$ under $\mathbb{\bar{P}}$. Thus it
follows that $\bar{X}^{\varepsilon ,\varphi }=\bar{\mathcal{G}}^{\varepsilon
}(\varepsilon N^{\varepsilon ^{-1}\varphi })$ is $\mathbb{Q}%
_{T}^{\varepsilon }$ a.s. (and hence $\mathbb{\bar{P}}$ a.s.) the unique
solution of \eqref{eq:eq1152}, where $\bar{\mathcal{G}}^{\varepsilon }$ is
as in Theorem \ref{pathuniq}.

Define $\bar{Y}^{\varepsilon ,\varphi }\equiv \mathcal{G}^{\varepsilon
}(\varepsilon N^{\varepsilon ^{-1}\varphi })$, and note that this is
equivalent to 
\begin{equation}
\bar{Y}^{\varepsilon ,\varphi }=\frac{1}{a(\varepsilon )}(\bar{X}%
^{\varepsilon ,\varphi }-X^{0}).  \label{eq:Y_X_relation}
\end{equation}%
The following moment bounds on $\bar{X}^{\varepsilon ,\varphi }$ and $\bar{Y}%
^{\varepsilon ,\varphi }$ will be useful for our analysis. Lemma \ref%
{bdX_poi}, which is needed in the proof of Lemma \ref{bdY_poi}, has a proof
that is very similar to the proof of Proposition 3.13 in \cite{BCD}. Since a
similar result for an infinite dimensional model is proved in detail in
Section \ref{sec:sec748} (see Lemma \ref{bdY_poi_inf} (a)), we omit the
proof of Lemma \ref{bdX_poi}.

\begin{lemma}
\label{bdX_poi} There exists an $\varepsilon_{0}\in(0,\infty)$ such that 
\begin{equation*}
\sup_{\varepsilon\in(0,\varepsilon_{0})}\sup_{\varphi\in\mathcal{U}%
_{+,\varepsilon}^{M}}\mathbb{\bar{E}}\left[ \sup_{0\leq s\leq T}|\bar {X}%
^{\varepsilon,\varphi}(s)|^{2}\right] <\infty.
\end{equation*}
\end{lemma}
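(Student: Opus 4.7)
The plan is to derive an integral inequality for $\mathbb{E} Z^{\varepsilon}(t):=\mathbb{E}\sup_{u\leq t}|\bar X^{\varepsilon,\varphi}(u)|^{2}$ in which the control-induced perturbations vanish with $\varepsilon$, and then close with Gronwall's lemma. Writing $N^{\varepsilon^{-1}\varphi}=\tilde N^{\varepsilon^{-1}\varphi}+\varepsilon^{-1}\varphi\,\nu_{T}$ for the compensated measure and setting $\psi^{\varepsilon}=(\varphi-1)/a(\varepsilon)$, I would decompose
\begin{equation*}
\bar X^{\varepsilon,\varphi}(t)=x_{0}+\int_{0}^{t}b(\bar X^{\varepsilon,\varphi}(s))\,ds+M^{\varepsilon}(t)+D_{1}^{\varepsilon}(t)+a(\varepsilon)D_{2}^{\varepsilon}(t),
\end{equation*}
where $M^{\varepsilon}(t)$ is the compensated Poisson stochastic integral, $D_{1}^{\varepsilon}(t)=\int_{0}^{t}\!\int_{\mathbb X}G(\bar X^{\varepsilon,\varphi}(s),y)\,\nu(dy)ds$, and $D_{2}^{\varepsilon}(t)=\int_{0}^{t}\!\int_{\mathbb X}G(\bar X^{\varepsilon,\varphi}(s),y)\psi^{\varepsilon}(y,s)\,\nu(dy)ds$. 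The deterministic drift and $D_{1}^{\varepsilon}$ are standard: using linear growth of $b$ (Condition \ref{cndns}(a)) and $\int M_{G}\,d\nu<\infty$ (Condition \ref{cndns}(c)), each contributes at most $C+C\int_{0}^{t}\mathbb{E} Z^{\varepsilon}(s)\,ds$ after squaring, taking a supremum, and taking expectation.

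For the martingale term, the Burkholder--Davis--Gundy inequality combined with $|G(x,y)|^{2}\leq 2M_{G}^{2}(y)(1+|x|^{2})$ gives
\begin{equation*}
\mathbb{E}\sup_{u\leq t}|M^{\varepsilon}(u)|^{2}\leq C\varepsilon\,\mathbb{E}\int_{\mathbb X\times[0,t]}|G(\bar X^{\varepsilon,\varphi}(s),y)|^{2}\varphi(y,s)\,\nu(dy)ds\leq 2C\varepsilon\,\mathbb{E}\!\left[(1+Z^{\varepsilon}(t))\!\int_{\mathbb X\times[0,T]}\!M_{G}^{2}\varphi\,d\nu_{T}\right].
\end{equation*}
Since $M_{G}\in L^{2}(\nu)\cap\mathcal{H}$ (Conditions \ref{cndns}(c) and \ref{cndnsnew}(a)), Lemma \ref{Ineq1} bounds the last integral almost surely by $\varsigma(a^{2}(\varepsilon)+T)$, independently of $\varphi\in\mathcal{U}_{+,\varepsilon}^{M}$, so $\mathbb{E}\sup_{u\leq t}|M^{\varepsilon}(u)|^{2}\leq C_{1}\varepsilon(1+\mathbb{E} Z^{\varepsilon}(t))$ for all sufficiently small $\varepsilon$. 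For $a(\varepsilon)D_{2}^{\varepsilon}$, the linear growth of $G$ together with Lemma \ref{Ineq2} (with $h=M_{G}$, $\beta=1$) yields the pathwise bound $|D_{2}^{\varepsilon}(t)|\leq (1+\sqrt{Z^{\varepsilon}(t)})\bigl(\rho(1)T^{1/2}+\vartheta(1)a(\varepsilon)\bigr)$, whence $\mathbb{E}\sup_{u\leq t}|a(\varepsilon)D_{2}^{\varepsilon}(u)|^{2}\leq C_{2}a^{2}(\varepsilon)(1+\mathbb{E} Z^{\varepsilon}(t))$.

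Collecting these estimates and choosing $\varepsilon_{0}$ so small that $C_{1}\varepsilon+C_{2}a^{2}(\varepsilon)\leq 1/2$ for all $\varepsilon<\varepsilon_{0}$, the $\mathbb{E} Z^{\varepsilon}(t)$ contributions on the right can be absorbed on the left, giving $\tfrac{1}{2}\mathbb{E} Z^{\varepsilon}(t)\leq C'+C'\int_{0}^{t}\mathbb{E} Z^{\varepsilon}(s)\,ds$ with constants uniform in $\varphi\in\mathcal{U}_{+,\varepsilon}^{M}$; Gronwall's lemma then yields the claim. The absorption step is a priori illegitimate because $\mathbb{E} Z^{\varepsilon}(t)$ might initially be infinite, so I would first run the argument with $Z^{\varepsilon}(t)$ replaced by $Z^{\varepsilon}(t\wedge\tau_{N})$ for the stopping times $\tau_{N}=\inf\{t:|\bar X^{\varepsilon,\varphi}(t)|>N\}\wedge T$, obtain an $N$-uniform bound, and let $N\to\infty$ by monotone convergence. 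The principal technical obstacle is the random coupling between $\bar X^{\varepsilon,\varphi}$ and the intensity $\varphi$ in the BDG estimate; the decisive input is that Lemmas \ref{Ineq1} and \ref{Ineq2} furnish deterministic, $\varphi$-uniform bounds on the $M_{G}^{2}\varphi$ and $M_{G}|\psi^{\varepsilon}|$ integrals over $\mathbb X_{T}$, permitting the $(1+Z^{\varepsilon}(t))$ factor to be extracted and absorbed once $\varepsilon$ is taken small enough.
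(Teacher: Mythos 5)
Your proof is correct and follows essentially the same route as the paper's (the paper omits the proof, pointing to Proposition 3.13 of \cite{BCD} and to the infinite-dimensional analogue in Lemma \ref{bdY_poi_inf}(a)): decompose into the compensated martingale plus the $\varphi$-weighted drift, use Doob/BDG together with the deterministic, $\varphi$-uniform bounds of Lemmas \ref{Ineq1} and \ref{Ineq2} on $\int M_{G}^{2}\varphi\,d\nu_{T}$ and $\int M_{G}|\psi^{\varepsilon}|\,d\nu_{T}$, absorb the small-$\varepsilon$ terms, and close with Gronwall. The only cosmetic difference is that the paper's detailed infinite-dimensional version runs the argument through It\^{o}'s formula for $\Vert\cdot\Vert_{-p}^{2}$ and a Galerkin approximation, which is unnecessary in finite dimensions where your direct squaring of the integral equation (with the localization you note) suffices.
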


\begin{lemma}
\label{bdY_poi} There exists an $\varepsilon_{0}\in(0,\infty)$ such that 
\begin{equation*}
\sup_{\varepsilon\in(0,\varepsilon_{0})}\sup_{\varphi\in\mathcal{U}%
_{+,\varepsilon}^{M}}\mathbb{\bar{E}}\left[ \sup_{0\leq s\leq T}|\bar {Y}%
^{\varepsilon,\varphi}(s)|^{2}\right] <\infty.
\end{equation*}
\end{lemma}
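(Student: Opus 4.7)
The plan is to decompose $\bar{Y}^{\varepsilon,\varphi}$ into four pieces and close the estimate with Gronwall's inequality. Compensating $N^{\varepsilon^{-1}\varphi}$ by its intensity $\varepsilon^{-1}\varphi(y,s)\nu(dy)ds$ and subtracting the ODE \eqref{eq:eq840} from \eqref{eq:eq1152}, one obtains, after dividing by $a(\varepsilon)$,
$$
\bar{Y}^{\varepsilon,\varphi}(t) = J_1(t) + J_2(t) + J_3(t) + J_4(t),
$$
where $J_1(t)=\int_0^t[b(\bar{X}^{\varepsilon,\varphi}(s))-b(X^0(s))]/a(\varepsilon)\,ds$, $J_2(t)=M^{\varepsilon,\varphi}(t)/a(\varepsilon)$ is the scaled compensated PRM integral of $\varepsilon G(\bar{X}^{\varepsilon,\varphi}(s-),y)$, $J_3(t)=\int_{\mathbb{X}\times[0,t]}[G(\bar{X}^{\varepsilon,\varphi}(s),y)-G(X^0(s),y)]/a(\varepsilon)\,\nu(dy)ds$, and $J_4(t)=\int_{\mathbb{X}\times[0,t]}G(\bar{X}^{\varepsilon,\varphi}(s),y)\psi^\varepsilon(y,s)\,\nu(dy)ds$ with $\psi^\varepsilon=(\varphi-1)/a(\varepsilon)$.

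First I would use the Lipschitz bounds in Condition~\ref{cndns} to get $|J_1(t)|+|J_3(t)|\leq (L_b+\|L_G\|_1)\int_0^t|\bar{Y}^{\varepsilon,\varphi}(s)|\,ds$, which after squaring and Cauchy-Schwarz contributes $C\int_0^t\bar{\mathbb{E}}\sup_{u\leq s}|\bar{Y}^{\varepsilon,\varphi}(u)|^2\,ds$ to the Gronwall inequality. Next I would apply the Burkholder-Davis-Gundy inequality for the compensated PRM integral to get
$$
\bar{\mathbb{E}}\sup_{t\leq T}|M^{\varepsilon,\varphi}(t)|^2 \leq C\,\varepsilon\,\bar{\mathbb{E}}\int_0^T\!\!\int_{\mathbb{X}}|G(\bar{X}^{\varepsilon,\varphi}(s),y)|^2\varphi(y,s)\,\nu(dy)ds.
$$
Using the growth bound $|G(x,y)|\leq M_G(y)(1+|x|)$, Lemma~\ref{bdX_poi} to control $\bar{\mathbb{E}}\sup_s|\bar{X}^{\varepsilon,\varphi}(s)|^2$, and Lemma~\ref{Ineq1} with $h=M_G$ (which is in $L^2(\nu)\cap\mathcal{H}$ by Condition~\ref{cndnsnew}(a)) to bound $\int M_G^2\varphi\,d\nu_T$ by $\varsigma(a^2(\varepsilon)+T)$, one gets $\bar{\mathbb{E}}\sup_t|J_2(t)|^2\leq C\varepsilon/a^2(\varepsilon)=Cb(\varepsilon)$, which is uniformly bounded for $\varepsilon$ small.

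The term $J_4$ is the main obstacle, since $\psi^\varepsilon$ is only controlled through $L_T(\varphi)\leq Ma^2(\varepsilon)$ and need not be uniformly bounded in $L^2(\nu_T)$; one cannot simply apply Cauchy-Schwarz. Factoring out the growth,
$$
|J_4(t)| \leq \bigl(1+\sup_{s\leq T}|\bar{X}^{\varepsilon,\varphi}(s)|\bigr)\int_{\mathbb{X}\times[0,T]}M_G(y)|\psi^\varepsilon(y,s)|\,\nu(dy)ds,
$$
and invoking Lemma~\ref{Ineq2} with $h=M_G$, $I=[0,T]$, $\beta=1$ bounds the double integral by $\rho(1)T^{1/2}+\vartheta(1)a(\varepsilon)\leq C$. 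Combined with Lemma~\ref{bdX_poi}, this yields $\sup_{\varepsilon,\varphi}\bar{\mathbb{E}}\sup_t|J_4(t)|^2<\infty$; the split $\psi^\varepsilon=\psi^\varepsilon 1_{\{|\psi^\varepsilon|\leq 1/a(\varepsilon)\}}+\psi^\varepsilon 1_{\{|\psi^\varepsilon|>1/a(\varepsilon)\}}$ packaged inside Lemma~\ref{Ineq2} is precisely the device that defeats the lack of uniform $L^2$ control.

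Squaring the identity $\bar{Y}^{\varepsilon,\varphi}=J_1+J_2+J_3+J_4$, passing to $\sup_{u\leq t}$, taking $\bar{\mathbb{E}}$, and combining the four estimates gives
$$
\bar{\mathbb{E}}\sup_{u\leq t}|\bar{Y}^{\varepsilon,\varphi}(u)|^2 \leq C_1 + C_2\int_0^t \bar{\mathbb{E}}\sup_{u\leq s}|\bar{Y}^{\varepsilon,\varphi}(u)|^2\,ds
$$
with $C_1,C_2$ independent of $\varepsilon\in(0,\varepsilon_0)$ and $\varphi\in\mathcal{U}_{+,\varepsilon}^M$. A standard application of Gronwall's lemma then yields the claimed uniform bound.
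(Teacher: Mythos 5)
Your proposal is correct and follows essentially the same route as the paper: the same decomposition (the paper merely splits your $J_4$ further into $\int(G(\bar{X}^{\varepsilon,\varphi})-G(X^0))\psi\,d\nu_T+\int G(X^0)\psi\,d\nu_T$), the same reliance on Lemmas \ref{Ineq1}, \ref{Ineq2} and \ref{bdX_poi}, and the same Gronwall closure. Your coarser treatment of $J_4$ via the growth bound on $G$ and Lemma \ref{bdX_poi} is a legitimate minor simplification, as it avoids the paper's step of absorbing an $O(a^2(\varepsilon))\,\sup_{s\le t}|\bar{Y}^{\varepsilon,\varphi}(s)|^2$ contribution into the left-hand side for small $\varepsilon$.
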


\begin{proof}
Fix $\varphi \in \mathcal{U}_{+,\varepsilon }^{M}$ and let $\psi =(\varphi
-1)/a(\varepsilon )$. Let $\tilde{N}^{\varepsilon ^{-1}\varphi }(dy,ds)={N}%
^{\varepsilon ^{-1}\varphi }(dy,ds)-\varepsilon ^{-1}\varphi (y,s)\nu (dy)ds$%
. Then 
\begin{align*}
\bar{X}^{\varepsilon ,\varphi }(t)-X^{0}(t)& =\int_{0}^{t}\left( b(\bar{X}%
^{\varepsilon ,\varphi }(s))-b(X^{0}(s))\right) ds+\int_{\mathbb{X}\times
\lbrack 0,t]}\varepsilon G(\bar{X}^{\varepsilon ,\varphi }(s-),y)\tilde{N}%
^{\varepsilon ^{-1}\varphi }(dy,ds) \\
& \hspace{0.2cm}+\int_{\mathbb{X}\times \lbrack 0,t]}\left( G(\bar{X}%
^{\varepsilon ,\varphi }(s),y)-G(X^{0}(s),y)\right) \varphi (y,s)\nu (dy)ds
\\
& \hspace{0.2cm}+\int_{\mathbb{X}\times \lbrack 0,t]}G(X^{0}(s),y)(\varphi
(y,s)-1)\nu (dy)ds.
\end{align*}%
Write 
\begin{equation}
\bar{Y}^{\varepsilon ,\varphi }=A^{\varepsilon ,\varphi }+M^{\varepsilon
,\varphi }+B^{\varepsilon ,\varphi }+\mathcal{E}_{1}^{\varepsilon ,\varphi
}+C^{\varepsilon ,\varphi },  \label{eq:eq1109}
\end{equation}%
where 
\begin{align}
M^{\varepsilon ,\varphi }(t)& =\frac{\varepsilon }{a(\varepsilon )}\int_{%
\mathbb{X}\times \lbrack 0,t]}G(\bar{X}^{\varepsilon ,\varphi }(s-),y)\tilde{%
N}^{\varepsilon ^{-1}\varphi }(ds,dy),  \notag \\
A^{\varepsilon ,\varphi }(t)& =\frac{1}{a(\varepsilon )}\int_{0}^{t}\left( b(%
\bar{X}^{\varepsilon ,\varphi }(s))-b(X^{0}(s))\right) ds,  \notag \\
B^{\varepsilon ,\varphi }(t)& =\frac{1}{a(\varepsilon )}\int_{\mathbb{X}%
\times \lbrack 0,t]}\left( G(\bar{X}^{\varepsilon ,\varphi
}(s),y)-G(X^{0}(s),y)\right) \nu (dy)ds,  \notag \\
\mathcal{E}_{1}^{\varepsilon ,\varphi }(t)& =\int_{\mathbb{X}\times \lbrack
0,t]}\left( G(\bar{X}^{\varepsilon ,\varphi }(s),y)-G(X^{0}(s),y)\right)
\psi (y,s)\nu (dy)ds,  \notag \\
C^{\varepsilon ,\varphi }(t)& =\int_{\mathbb{X}\times \lbrack
0,t]}G(X^{0}(s),y)\psi (y,s)\nu (dy)ds.  \label{eq:eq722}
\end{align}%
Noting that $M^{\varepsilon ,\varphi }$ is a martingale, Doob's inequality
gives 
\begin{align*}
\mathbb{\bar{E}}\left[ \sup_{r\leq T}|M^{\varepsilon ,\varphi }(r)|^{2}%
\right] & \leq \left( \frac{\varepsilon }{a(\varepsilon )}\right) ^{2}%
\mathbb{\bar{E}}\left[ \int_{\mathbb{X}\times \lbrack 0,T]}|G(\bar{X}%
^{\varepsilon ,\varphi }(s),y)|^{2}\varepsilon ^{-1}\varphi (y,s)\nu (dy)ds%
\right] \\
& \leq \frac{2\varepsilon }{a^{2}(\varepsilon )}\mathbb{\bar{E}}\left[
\left( 1+\sup_{t\leq T}|\bar{X}^{\varepsilon ,\varphi }(t)|^{2}\right) \int_{%
\mathbb{X}\times \lbrack 0,T]}M_{G}(y)^{2}\varphi (y,s)\nu (dy)ds\right] .
\end{align*}%
From Condition \ref{cndnsnew} $M_{G}\in \mathcal{H}$, and so Lemmas \ref%
{Ineq1} and \ref{bdX_poi} imply that for some $\gamma _{1}\in (0,\infty )$
and $\varepsilon \in (0,\varepsilon _{0})$, where $\varepsilon _{0}$ is as
in Lemma \ref{bdY_poi}. 
\begin{equation}
\sup_{\varphi \in \mathcal{U}_{+,\varepsilon }^{M}}\mathbb{\bar{E}}\left[
\sup_{r\leq T}|M^{\varepsilon ,\varphi }(r)|^{2}\right] \leq \gamma _{1}%
\frac{\varepsilon }{a^{2}(\varepsilon )}.  \label{eq:bound_on_M}
\end{equation}%
Next by the Lipschitz condition on $G$ (Condition \ref{cndns}(b) and
Condition \ref{cndnsnew}(a)) and the Cauchy-Schwarz inequality, there is $%
\gamma _{2}\in (0,\infty )$ such that for all $t\in \lbrack 0,T]$ and $%
\varphi \in \mathcal{U}_{+,\varepsilon }^{M}$ 
\begin{align}
\sup_{r\leq t}|\mathcal{E}_{1}^{\varepsilon ,\varphi }(r)|^{2}& \leq
a^{2}(\varepsilon )\left( \int_{\mathbb{X}\times \lbrack 0,t]}L_{G}(y)|\bar{Y%
}^{\varepsilon ,\varphi }(s)\Vert \psi (y,s)|\nu (dy)ds\right) ^{2}  \notag
\\
& \leq a^{2}(\varepsilon )\sup_{s\leq t}|\bar{Y}^{\varepsilon ,\varphi
}(s)|^{2}\left( \int_{\mathbb{X}\times \lbrack 0,t]}L_{G}(y)|\psi (y,s)|\nu
(dy)ds\right) ^{2}  \notag \\
& \leq \gamma _{2}a^{2}(\varepsilon )\sup_{s\leq t}|\bar{Y}^{\varepsilon
,\varphi }(s)|^{2},  \label{bdE}
\end{align}%
where the last inequality follows from Lemma \ref{Ineq2}.\newline

Again using the Lipschitz condition on $G$ we have, for all $t\in \lbrack
0,T]$ 
\begin{equation*}
\sup_{r\leq t}|B^{\varepsilon ,\varphi }(r)|^{2}\leq T\Vert L_{G}\Vert
_{1}^{2}\int_{0}^{t}|\bar{Y}^{\varepsilon ,\varphi }(s)|^{2}\ ds.
\end{equation*}%
Similarly, the Lipschitz condition on $b$ gives 
\begin{equation*}
\sup_{r\leq t}|A^{\varepsilon ,\varphi }(r)|^{2}\leq TL_{b}^{2}\int_{0}^{t}|%
\bar{Y}^{\varepsilon ,\varphi }(s)|^{2}\ ds.
\end{equation*}%
From Lemma \ref{Ineq2} again we have that, for some $\gamma _{3}\in
(0,\infty )$ and all $\varphi \in \mathcal{U}_{+,\varepsilon }^{M}$ 
\begin{align*}
\sup_{r\leq T}|C^{\varepsilon ,\varphi }(r)|^{2}& \leq \left( \int_{\mathbb{X%
}\times \lbrack 0,T]}\left\vert G(X^{0}(s),y)\psi (y,s)\right\vert \nu
(dy)ds\right) ^{2} \\
& \leq \left( 1+\sup_{s\leq T}|X^{0}(s)|^{2}\right) \left( \int_{\mathbb{X}%
\times \lbrack 0,T]}M_{G}(y)|\psi (y,s)|\nu (dy)ds\right) ^{2} \\
& \leq \gamma _{3},
\end{align*}%
Collecting these estimates we have, for some $\gamma _{4}\in (0,\infty )$
and all $\varphi \in \mathcal{U}_{+,\varepsilon }^{M}$, $t\in \lbrack 0,T]$ 
\begin{equation*}
(1-5a^{2}(\varepsilon )\gamma _{2})\mathbb{\bar{E}}\left[ \sup_{s\leq t}(%
\bar{Y}^{\varepsilon ,\varphi }(s))^{2}\right] \leq \gamma _{4}\left(
1+\int_{0}^{t}\mathbb{\bar{E}}\left[ \sup_{s\leq r}(\bar{Y}^{\varepsilon
,\varphi }(r))^{2}\right] \ dr\right) .
\end{equation*}%
Choose $\varepsilon _{0}$ such that $(1-5a^{2}(\varepsilon )\gamma _{2})>1/2$,%
 for all $\varepsilon \leq
\varepsilon _{0}$. The result now follows from Gronwall's inequality.
\end{proof}

\begin{lemma}
\label{ctrlbound} Let $h\in L^{2}(\nu)\cap\mathcal{H}$. Then for every $%
\delta>0$ there exists a compact set $C\subset\mathbb{X}$ such that 
\begin{equation*}
\sup_{\varepsilon>0}\sup_{\psi\in\mathcal{S}_{\varepsilon}^{M}}\int
_{C^{c}\times\lbrack0,T]}|h(y)\psi(y,s)|\nu(dy)ds<\delta.
\end{equation*}
\end{lemma}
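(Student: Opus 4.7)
The strategy is to adapt the splitting argument from the proof of Lemma~\ref{Ineq2}, but with the spatial $L^2$ norm of $h$ localized to $C^c$ so that it can be driven to zero by enlarging $C$. Since $h \in L^2(\nu)$, dominated convergence along the increasing sequence of compacts $K_n \uparrow \mathbb{X}$ gives $\|h 1_{C^c}\|_2 \to 0$ as $C$ grows, and this will supply the final smallness.

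Fix $\beta > 0$, $\varepsilon > 0$, $\psi \in \mathcal{S}_\varepsilon^M$, and set $\varphi = 1 + a(\varepsilon)\psi \in \mathcal{S}_{+,\varepsilon}^M$. Splitting the integral at the threshold $|\psi| = \beta/a(\varepsilon)$, one applies Cauchy--Schwarz together with Lemma~\ref{sineqs2}(c) for the low part, and Cauchy--Schwarz, Lemma~\ref{sineqs2}(a), and the identity $a(\varepsilon)|\psi| = |\varphi - 1| \leq \varphi + 1$ for the high part (exactly as in the derivation of \eqref{split1} and \eqref{split2}) to obtain
\begin{align*}
\int_{C^c \times [0,T]} |h\psi|\, 1_{\{|\psi| < \beta/a(\varepsilon)\}}\, d\nu_T &\leq T^{1/2}\, \|h 1_{C^c}\|_2\, (M\kappa_2(\beta))^{1/2}, \\
\int_{C^c \times [0,T]} |h\psi|\, 1_{\{|\psi| \geq \beta/a(\varepsilon)\}}\, d\nu_T &\leq (M\kappa_1(\beta))^{1/2} \Bigl(T \|h 1_{C^c}\|_2^2 + \int_{\mathbb{X} \times [0,T]} h^2 \varphi\, d\nu_T\Bigr)^{1/2}.
\end{align*}
Enlarging the last integral from $C^c$ to $\mathbb{X}$ and applying Lemma~\ref{Ineq1}, it is bounded by $\varsigma(a^2(\varepsilon) + T) \leq K_0$ uniformly in $\varepsilon$ on the relevant range (using $a(\varepsilon) \to 0$ to bound $a$).

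Given $\delta > 0$, first choose $\beta$ large enough that $(M\kappa_1(\beta))^{1/2}(T\|h\|_2^2 + K_0)^{1/2} < \delta/2$, which is possible since $\kappa_1(\beta) \downarrow 0$; then, with this $\beta$ fixed, choose a compact $C$ for which $T^{1/2}\|h 1_{C^c}\|_2 (M\kappa_2(\beta))^{1/2} < \delta/2$. The main subtlety is that localizing $\int h^2 \varphi\, d\nu_T$ to $C^c$ does not yield a bound that vanishes as $C \uparrow \mathbb{X}$ uniformly in $\varphi$ (since the control may concentrate mass wherever $h^2$ is large); however, such a bound is not needed, because the $\kappa_1(\beta)$ prefactor in the high-part estimate absorbs this non-localized contribution, after which the localized $\|h 1_{C^c}\|_2$ factor in the low-part estimate delivers the required smallness.
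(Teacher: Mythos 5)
Your proof is correct and follows essentially the same route as the paper: split at the threshold $|\psi|=\beta/a(\varepsilon)$, control the high-$|\psi|$ part over all of $\mathbb{X}$ uniformly via the $\kappa_1(\beta)\to 0$ mechanism, and then use $h\in L^2(\nu)$ localized to $C^c$ together with Lemma \ref{sineqs2}(c) for the low-$|\psi|$ part. The only cosmetic difference is that you re-derive the high-part bound from \eqref{split2} and Lemma \ref{Ineq1} rather than quoting the first conclusion of Lemma \ref{Ineq2} directly, and your closing observation about why the non-localized high-part bound suffices is exactly the point of the paper's argument.
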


\begin{proof}
By Lemma \ref{Ineq2}, for all $\psi \in \mathcal{S}_{\varepsilon }^{M}$ 
\begin{equation}
\int_{\mathbb{X}\times \lbrack 0,T]}|h(y)\psi (y,s)1_{\{|\psi |\geq \beta
/a(\varepsilon )\}}|\nu (dy)ds\leq \vartheta (\beta )(1+T^{1/2}),  \notag
\label{ineqsplit1}
\end{equation}%
where $\vartheta (\beta )\downarrow 0$ as $\beta \uparrow \infty $. Choose $%
\beta _{0}<\infty $ such that $\vartheta (\beta _{0})(1+T^{1/2})<\delta /2$.
Next, from Lemma \ref{sineqs2}(c), for any compact set $C$ in $\mathbb{X}$ 
\begin{align*}
\int_{C^{c}\times \lbrack 0,T]}|h(y)\psi (y,s)1_{\{|\psi |\leq \beta
_{0}/a(\varepsilon )\}}|\nu (dy)ds& \leq \left( T\int_{C^{c}}h^{2}(y)\nu
(dy)\int_{\mathbb{X}\times \lbrack 0,T]}\psi ^{2}1_{\{|\psi |\leq \beta
_{0}/a(\varepsilon )\}}\right) ^{1/2} \\
& \leq \left( MT\kappa _{2}(\beta _{0})\int_{C^{c}}h^{2}(y)\nu (dy)\right)
^{1/2}.
\end{align*}%
Since $h\in L^{2}(\nu )$, we can find a compact set $C$ such that 
\begin{equation*}
\left( MT\kappa _{2}(\beta _{0})\int_{C^{c}}h^{2}(y)\nu (dy)\right)
^{1/2}<\delta /2,
\end{equation*}%
and the result follows.
\end{proof}

\begin{lemma}
\label{ctrllim0} Let $h\in L^{2}(\nu )\cap \mathcal{H}$ and suppose $h\geq 0$%
. Then for any $\beta <\infty $, 
\begin{equation*}
\sup_{\psi \in \mathcal{S}_{\varepsilon }^{M}}\int_{\mathbb{X}\times \lbrack
0,T]}h(y)|\psi (y,s)|1_{\{|\psi |>\beta /a(\varepsilon )\}}\nu
(dy)ds\rightarrow 0\quad \mbox{as\ }\varepsilon \rightarrow 0.
\end{equation*}
\end{lemma}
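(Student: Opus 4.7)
The plan is to reduce via Lemma \ref{ctrlbound} to the case where $h$ is supported on a compact set $C$ (so that $h$ has moments of every order against $\nu|_C$), and then to combine the cost bound $L_T(\varphi)\le Ma^2(\varepsilon)$ with H\"older's inequality, arranging matters so that the smallness of $\nu_T(\{|\varphi-1|>\beta\})$ (which is of order $a^2(\varepsilon)$) dominates the $1/a(\varepsilon)$ factor produced by $\psi=(\varphi-1)/a(\varepsilon)$.

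Fix $\eta>0$. By Lemma \ref{ctrlbound}, choose compact $C\subset \mathbb X$ with $\sup_{\varepsilon>0,\psi\in\mathcal S_\varepsilon^M}\int_{C^c\times [0,T]}h|\psi|\,d\nu_T<\eta$; splitting the integral across $C$ and $C^c$ and sending $\eta\downarrow 0$ at the end reduces matters to controlling the contribution on $C\times [0,T]$. Set $\tilde h = h\,1_C$. Since $\nu(C)<\infty$ and $h\in\mathcal H$, the AM--GM bound $\sigma h\le \delta h^2+\sigma^2/(4\delta)$ (with $\delta$ from the definition of $\mathcal H$) gives $e^{\sigma \tilde h}\in L^1(\nu)$ for every $\sigma>0$, and a fortiori $\tilde h^p\in L^1(\nu)$ for every $p>0$.

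With $\varphi=1+a(\varepsilon)\psi$, we have $\{|\psi|>\beta/a(\varepsilon)\}=\{|\varphi-1|>\beta\}$, and Lemma \ref{sineqs}(b) combined with $\int\ell(\varphi)\,d\nu_T\le Ma^2(\varepsilon)$ yields
\begin{equation*}
\nu_T(\{|\varphi-1|>\beta\}) \le \frac{\kappa_1(\beta)}{\beta}\int \ell(\varphi)\,d\nu_T \le \frac{M\kappa_1(\beta)}{\beta}\,a^2(\varepsilon).
\end{equation*}
Decompose this event as $A^+\cup A^-$ with $A^+=\{\varphi>1+\beta\}$ and $A^-=\{\varphi<1-\beta\}$ (the latter empty if $\beta\ge 1$). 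On $A^-$, $|\varphi-1|\le 1$, and H\"older's inequality with exponents $(3,3/2)$ gives
\begin{equation*}
\int \tilde h|\psi|\,1_{A^-}\,d\nu_T \le \frac{1}{a(\varepsilon)}\Bigl(T\int_C \tilde h^3\,d\nu\Bigr)^{1/3}\nu_T(A^-)^{2/3} = O\bigl(a(\varepsilon)^{1/3}\bigr).
\end{equation*}
On $A^+$, $\varphi-1\le \varphi$ and Lemma \ref{sineqs}(a) with $\sigma=1$ give $\tilde h\varphi\le e^{\tilde h}+\ell(\varphi)$; applying H\"older once more to $\int e^{\tilde h}1_{A^+}\,d\nu_T$ yields
\begin{equation*}
\int \tilde h|\psi|\,1_{A^+}\,d\nu_T \le \frac{1}{a(\varepsilon)}\Bigl(\bigl(T\textstyle\int_C e^{3\tilde h}\,d\nu\bigr)^{1/3}\nu_T(A^+)^{2/3} + Ma^2(\varepsilon)\Bigr) = O\bigl(a(\varepsilon)^{1/3}\bigr).
\end{equation*}
Both estimates are uniform in $\psi\in\mathcal S_\varepsilon^M$; passing to the $\eta\downarrow 0$ limit completes the argument.

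The main obstacle is that Lemma \ref{Ineq2} applied directly yields only the bound $\vartheta(\beta)(1+T^{1/2})$, which vanishes as $\beta\to\infty$ but not as $\varepsilon\to 0$ at fixed $\beta$. The key idea here is to choose the H\"older exponent $q<2$ so that $\nu_T(A^\pm)^{1/q}$ is a power of $a(\varepsilon)$ strictly greater than $1$, which is what lets us absorb the $1/a(\varepsilon)$ factor in $\psi$. The compact-support reduction via Lemma \ref{ctrlbound} is indispensable because it is what supplies the integrability of $\tilde h^3$ and $e^{3\tilde h}$ required for H\"older.
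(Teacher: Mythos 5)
Your proof is correct, but it takes a genuinely different route from the paper's. Both arguments begin with the same reduction to a compact set $C$ via Lemma \ref{ctrlbound}, but from there they diverge. The paper truncates $h$ itself, writing $h=h1_{\{h\le K\}}+h1_{\{h>K\}}$: the bounded piece is handled directly by Lemma \ref{sineqs2}(a) (giving $KM\kappa_1(\beta)a(\varepsilon)$), while the unbounded piece goes through the Cauchy--Schwarz computation of \eqref{split2} and the exponential-moment inequality, leaving a residual term $\int_C e^{\delta h^2}1_{\{h>K\}}d\nu$ that vanishes only in the subsequent limit $K\to\infty$; the conclusion therefore requires the iterated limit $\varepsilon\to 0$ followed by $K\to\infty$ and yields no rate. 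You instead leave $h$ alone and split the \emph{event} $\{|\varphi-1|>\beta\}$ into $A^\pm$, feed the cost bound $L_T(\varphi)\le Ma^2(\varepsilon)$ through Lemma \ref{sineqs}(b) to get $\nu_T(A^+\cup A^-)\le \beta^{-1}M\kappa_1(\beta)a^2(\varepsilon)$, and then apply H\"older with exponents $(3,3/2)$ so that $\nu_T(A^\pm)^{2/3}=O(a(\varepsilon)^{4/3})$ strictly dominates the $1/a(\varepsilon)$ carried by $\psi$. The price is that you need $h^3$ and $e^{3h}$ integrable on $C$, which you correctly extract from $h\in\mathcal H$ and $\nu(C)<\infty$ via $\sigma h\le \delta h^2+\sigma^2/(4\delta)$ (and the pointwise inequality $\tilde h\varphi\le e^{\tilde h}+\ell(\varphi)$ extends harmlessly to the boundary cases $\tilde h=0$ or $\varphi=0$). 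The payoff is a single limit and an explicit rate $O(a(\varepsilon)^{1/3})$ on the compact piece, which the paper's argument does not provide; the paper's version, on the other hand, needs only the square-exponential moment in the form stated in \eqref{eq:defofH} and Cauchy--Schwarz rather than a sub-quadratic H\"older exponent. All steps in your argument are uniform over $\psi\in\mathcal S_\varepsilon^M$, and there is no circularity, since Lemma \ref{ctrlbound} rests only on Lemmas \ref{Ineq2} and \ref{sineqs2}.
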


\begin{proof}
In view of Lemma \ref{ctrlbound}, it suffices to show that for any compact
subset $C$ of $\mathbb{X}$, 
\begin{equation}
\sup_{\psi \in \mathcal{S}_{\varepsilon }^{M}}\int_{C\times \lbrack
0,T]}h(y)|\psi (y,s)|1_{\{|\psi |>\beta /a(\varepsilon )\}}\nu
(dy)ds\rightarrow 0\quad \mbox{as\ }\varepsilon \rightarrow 0.
\label{eq:L4.7_suff}
\end{equation}%
For $K\in (0,\infty )$, write 
\begin{equation*}
h=h1_{\{h\leq K\}}+h1_{\{h>K\}}.
\end{equation*}%
From Lemma \ref{sineqs2}(a), for any $\psi \in \mathcal{S}_{\varepsilon
}^{M} $, 
\begin{align}
\int_{C\times \lbrack 0,T]}h(y)1_{\{h\leq K\}}|\psi (y,s)|1_{\{|\psi |>\beta
/a(\varepsilon )\}}\nu (dy)ds& \leq K\int_{C\times \lbrack 0,T]}|\psi
(y,s)|1_{\{|\psi |>\beta /a(\varepsilon )\}}\nu (dy)ds  \notag
\label{hsplit1} \\
& \leq KM\kappa _{1}(\beta )a(\varepsilon ).
\end{align}%
The same calculation as in \eqref{split2}, but with $\mathbb{X}$ replaced by 
$C$ and $h$ with $h1_{\{h>K\}}$, gives 
\begin{align}
\int_{C\times \lbrack 0,T]}h1_{\{h>K\}}|\psi |1_{\{|\psi |>\beta
/a(\varepsilon )\}}d\nu _{T}& \leq \Big(M\kappa _{1}(\beta )\Big(%
T\int_{C}h^{2}1_{\{h>K\}}d\nu  \notag  \label{hsplit2} \\
& \hspace{0.6cm}+\int_{C\times \lbrack 0,T]}h^{2}1_{\{h>K\}}\varphi d\nu _{T}%
\Big)\Big)^{1/2},
\end{align}%
where $\varphi =1+a(\varepsilon )\psi $. Now using Lemma \ref{sineqs}(a)
with $\sigma =1$, $a=\delta h^{2}$, $b=\varphi /\delta $, where $\delta $ is
as in (\ref{eq:defofH}), we have 
\begin{equation}
\int_{C\times \lbrack 0,T]}h^{2}1_{\{h>K\}}\varphi d\nu _{T}\leq
T\int_{C}e^{\delta h^{2}}1_{\{h>K\}}\ d\nu +\int_{C\times \lbrack
0,T]}1_{\{h>K\}}\ell (\varphi /\delta )d\nu _{T}.  \label{hsplit3}
\end{equation}%
Next, noting that (\ref{eq:split_of_cost}), \eqref{eq:eq121b} hold for all
measurable sets $\Gamma $ in $\mathbb{X}$, we have on applying these
inequalities with $\Gamma =C\cap \{h>K\}$, 
\begin{align}
\int_{C\times \lbrack 0,T]}1_{\{h>K\}}\ell (\varphi /\delta )d\nu _{T}& \leq 
\frac{M}{\delta }a^{2}(\varepsilon )+c_{1}(\delta )a(\varepsilon
)\int_{C\times \lbrack 0,T]}|\psi |1_{\{h>K\}}d\nu _{T}+c_{2}(\delta )T\nu
(C\cap \lbrack h>K])  \notag \\
& \leq \frac{M}{\delta }a^{2}(\varepsilon )+Mc_{1}(\delta )a^{2}(\varepsilon
)\kappa _{1}(1)+c_{1}(\delta )a(\varepsilon )T^{1/2}\sqrt{\nu (C)}(M\kappa
_{2}(1))^{1/2}  \notag \\
& \quad +c_{2}(\delta )T\nu (C\cap \lbrack h>K]).  \label{eq:eq150}
\end{align}%
Thus from \eqref{hsplit1}, \eqref{hsplit2}, \eqref{hsplit3} and %
\eqref{eq:eq150} we have, for some $M_{1}<\infty $,
\begin{equation*}
\sup_{\psi \in \mathcal{S}_{\varepsilon }^{M}}\int_{C\times \lbrack
0,T]}h|\psi |1_{\{\psi >\beta /a(\varepsilon )\}}d\nu _{T}\leq M_{1}\left(
a^{2}(\varepsilon )+(K+1)a(\varepsilon )+\int_{C}e^{\delta
h^{2}}1_{\{h>K\}}d\nu \right) ^{1/2},
\end{equation*}%
for all $\varepsilon >0$, $K<\infty $. Since $\int_{C}e^{\delta
h^{2}(y)}1_{\{h>K\}}\nu (dy)\rightarrow 0$ as $K\rightarrow \infty $, the
statement in (\ref{eq:L4.7_suff}) follows on sending first $\varepsilon
\rightarrow 0$ and then $K\rightarrow \infty $.
\end{proof}

\begin{lemma}
\label{ctrllim} Let $\{\psi^{\varepsilon}\}_{\varepsilon>0}$ be such that
for some $M<\infty$, $\psi^{\varepsilon}\in\mathcal{S}_{\varepsilon}^{M} $
for all $\varepsilon>0$. Let $f:\mathbb{X}\times\lbrack0,T]\rightarrow%
\mathbb{R}^{d}$ be such that 
\begin{equation*}
|f(y,s)|\leq h(y),y\in\mathbb{X},s\in\lbrack0,T]
\end{equation*}
for some $h$ in $L^{2}(\nu)\cap\mathcal{H}$. Suppose for some $\beta\in(0,1]$
that $\psi^{\varepsilon}1_{\{|\psi^{\varepsilon}|\leq\beta/a(\varepsilon)\}}$
converges in $B_{2}((M\kappa_{2}(1))^{1/2})$ to $\psi$. Then 
\begin{equation*}
\int_{\mathbb{X}\times\lbrack0,t]}f\psi^{\varepsilon}d\nu_{T}\rightarrow
\int_{\mathbb{X}\times\lbrack0,t]}f\psi d\nu_{T},\;\mbox{
for all }t\in\lbrack0,T].
\end{equation*}
\end{lemma}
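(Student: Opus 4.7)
The plan is to split $\psi^{\varepsilon}$ via the truncation level that appears in the hypothesis. Write
$$
\psi^{\varepsilon} \;=\; \psi^{\varepsilon}_{\le} + \psi^{\varepsilon}_{>}, \qquad
\psi^{\varepsilon}_{\le} \doteq \psi^{\varepsilon}\,1_{\{|\psi^{\varepsilon}|\le \beta/a(\varepsilon)\}},\quad
\psi^{\varepsilon}_{>} \doteq \psi^{\varepsilon}\,1_{\{|\psi^{\varepsilon}|>\beta/a(\varepsilon)\}},
$$
and treat the two terms separately. The hypothesis provides weak convergence for the truncated piece, and Lemma \ref{ctrllim0} will dispatch the tail piece.

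For the truncated piece, I would first observe that $f\,1_{\mathbb{X}\times[0,t]}$ lies coordinate-wise in $L^{2}(\nu_{T})$: indeed, each component of $f$ is bounded in modulus by $h$, so $\int_{\mathbb{X}\times[0,T]}|f|^{2}\,d\nu_{T}\le T\|h\|_{2}^{2}<\infty$. Since by assumption $\psi^{\varepsilon}_{\le}\Rightarrow \psi$ in the weakly compact ball $B_{2}((M\kappa_{2}(1))^{1/2})\subset L^{2}(\nu_{T})$, testing against each coordinate of $f\,1_{\mathbb{X}\times[0,t]}$ gives
$$
\int_{\mathbb{X}\times[0,t]} f\,\psi^{\varepsilon}_{\le}\,d\nu_{T}\;\longrightarrow\;\int_{\mathbb{X}\times[0,t]} f\,\psi\,d\nu_{T}.
$$

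For the tail piece, apply Lemma \ref{ctrllim0} to the nonnegative function $|h|$ (which still belongs to $L^{2}(\nu)\cap\mathcal{H}$) to obtain
$$
\sup_{\eta\in\mathcal{S}_{\varepsilon}^{M}}\int_{\mathbb{X}\times[0,T]} |h(y)|\,|\eta(y,s)|\,1_{\{|\eta|>\beta/a(\varepsilon)\}}\,\nu(dy)\,ds\;\longrightarrow\;0
$$
as $\varepsilon\to 0$. Since $\psi^{\varepsilon}\in\mathcal{S}_{\varepsilon}^{M}$ by hypothesis and $|f|\le h$,
$$
\Bigl|\int_{\mathbb{X}\times[0,t]} f\,\psi^{\varepsilon}_{>}\,d\nu_{T}\Bigr|
\;\le\;\int_{\mathbb{X}\times[0,T]} |h|\,|\psi^{\varepsilon}|\,1_{\{|\psi^{\varepsilon}|>\beta/a(\varepsilon)\}}\,d\nu_{T}\;\longrightarrow\;0.
$$

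Adding the two estimates yields the claim. No serious obstacle is anticipated: the lemma is essentially a bookkeeping consequence of the weak convergence hypothesis combined with Lemma \ref{ctrllim0}. The only point requiring mild care is confirming that $f\,1_{\mathbb{X}\times[0,t]}$ is an admissible test function for the weak convergence, which is immediate from the domination $|f|\le h\in L^{2}(\nu)$.
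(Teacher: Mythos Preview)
Your proof is correct and follows essentially the same route as the paper: split $\psi^{\varepsilon}$ at the truncation level $\beta/a(\varepsilon)$, handle the truncated part by the weak $L^{2}(\nu_{T})$ convergence against the test function $f\,1_{\mathbb{X}\times[0,t]}$, and kill the tail part via Lemma~\ref{ctrllim0} applied to $h$. The paper's argument is identical, just slightly more terse.
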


\begin{proof}
From Lemma \ref{ctrllim0} we have that 
\begin{equation*}
\int_{\mathbb{X}\times \lbrack 0,T]}|f\psi ^{\varepsilon }|1_{\{|\psi
^{\varepsilon }|>\beta /a(\varepsilon )\}}d\nu _{T}\rightarrow 0\;\mbox{ as }%
\varepsilon \rightarrow 0.
\end{equation*}%
Also, since $f1_{[0,t]}\in L^{2}(\nu _{T})$ for all $t\in \lbrack 0,T]$ and $%
\psi ^{\varepsilon }1_{|\psi ^{\varepsilon }|\leq \beta /a(\varepsilon
)}\rightarrow \psi $, we have 
\begin{equation*}
\int_{\mathbb{X}\times \lbrack 0,t]}f\psi ^{\varepsilon }1_{\{|\psi
^{\varepsilon }|\leq \beta /a(\varepsilon )\}}d\nu _{T}\rightarrow \int_{%
\mathbb{X}\times \lbrack 0,t]}f\psi d\nu _{T}.
\end{equation*}%
The result follows on combining the last two displays.
\end{proof}

\subsection{Proof of Theorem \protect\ref{LDPpoifin}}

The following is the key result needed for the proof of the theorem. It
gives tightness of the joint distribution of controls and controlled
processes, and indicates how limits of these two quantities are related.

\begin{lemma}
\label{cont_lim_poi} Let $\{\varphi ^{\varepsilon }\}_{\varepsilon >0}$ be
such that for some $M<\infty $, $\varphi ^{\varepsilon }\in \mathcal{U}%
_{+,\varepsilon }^{M}$ for every $\varepsilon >0$. Let $\psi ^{\varepsilon
}=(\varphi ^{\varepsilon }-1)/a(\varepsilon )$ and $\beta \in (0,1]$.
Suppose that $\bar{Y}^{\varepsilon ,\varphi ^{\varepsilon }}=\mathcal{G}%
^{\varepsilon }(\varepsilon N^{\varepsilon ^{-1}\varphi ^{\varepsilon }})$,
and recall that $\bar{Y}^{\varepsilon ,\varphi ^{\varepsilon }}=(\bar{X}%
^{\varepsilon ,\varphi ^{\varepsilon }}-X^{0})/a(\varepsilon )$, where $\bar{%
X}^{\varepsilon ,\varphi ^{\varepsilon }}=\mathcal{\bar{G}}^{\varepsilon
}(\varepsilon N^{\varepsilon ^{-1}\varphi ^{\varepsilon }})$. Then $\{(\bar{Y%
}^{\varepsilon ,\varphi ^{\varepsilon }},\psi ^{\varepsilon }1_{\{|\psi
^{\varepsilon }|\leq \beta /a(\varepsilon )\}})\}$ is tight in $D([0,T]:%
\mathbb{R}^{d})\times B_{2}((M\kappa _{2}(1))^{1/2})$, and any limit point $(%
\bar{Y},\psi )$ satisfies (\ref{eq:finite_dim_limit}) with $\eta $ replaced
by $\bar{Y}$, w.p.1.
\end{lemma}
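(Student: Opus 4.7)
The plan is to verify, separately, tightness of the two components and then identify any joint limit point. Tightness of the controls is automatic: by Lemma \ref{sineqs2}(c), $\psi^{\varepsilon}1_{\{|\psi^{\varepsilon}|\leq \beta/a(\varepsilon)\}}$ lies in $B_2((M\kappa_2(1))^{1/2})$ almost surely (using $\kappa_2(\beta)\leq \kappa_2(1)$ for $\beta\leq 1$), and this ball is a compact metric space in the weak topology, so the family is trivially tight.

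For tightness of $\bar{Y}^{\varepsilon,\varphi^{\varepsilon}}$ in $D([0,T]:\mathbb{R}^d)$, I would re-use the decomposition $\bar{Y}^{\varepsilon,\varphi^{\varepsilon}}=A^{\varepsilon,\varphi^{\varepsilon}}+M^{\varepsilon,\varphi^{\varepsilon}}+B^{\varepsilon,\varphi^{\varepsilon}}+\mathcal{E}_1^{\varepsilon,\varphi^{\varepsilon}}+C^{\varepsilon,\varphi^{\varepsilon}}$ from the proof of Lemma \ref{bdY_poi}. The martingale term $M^{\varepsilon,\varphi^{\varepsilon}}$ vanishes in $L^2$: \eqref{eq:bound_on_M} yields $\bar{\mathbb{E}}\sup_{r\leq T}|M^{\varepsilon,\varphi^{\varepsilon}}(r)|^2 \leq \gamma_1 b(\varepsilon)\to 0$. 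The error term $\mathcal{E}_1^{\varepsilon,\varphi^{\varepsilon}}$ tends to $0$ in probability: \eqref{bdE} together with the uniform bound $\bar{\mathbb{E}}\sup|\bar{Y}^{\varepsilon,\varphi^{\varepsilon}}|^2<\infty$ from Lemma \ref{bdY_poi} gives $\bar{\mathbb{E}}\sup_{r\leq T}|\mathcal{E}_1^{\varepsilon,\varphi^{\varepsilon}}(r)|^2 \leq \gamma_2 a^2(\varepsilon)\,\bar{\mathbb{E}}\sup|\bar{Y}|^2 \to 0$. For the remaining three continuous terms, the Lipschitz bounds on $b$ and $G$ and the moment bound on $\bar{Y}^{\varepsilon,\varphi^{\varepsilon}}$ give moduli of continuity $|A(t)-A(s)|+|B(t)-B(s)|\leq C\int_s^t |\bar{Y}|du$, controllable in probability, while for $C^{\varepsilon,\varphi^{\varepsilon}}$ I would apply Lemma \ref{Ineq2} with $h=M_G\in\mathcal{H}$ to obtain $|C(t)-C(s)|\leq (1+m_T)\bigl(\rho(\beta)|t-s|^{1/2}+\vartheta(\beta)a(\varepsilon)\bigr)$ for any $\beta$, delivering uniform (in $\varepsilon,\varphi^{\varepsilon}$) equicontinuity. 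Combined with the uniform moment bound this yields tightness of $\bar{Y}^{\varepsilon,\varphi^{\varepsilon}}$ in $C([0,T]:\mathbb{R}^d)$, hence in $D$.

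To identify a limit $(\bar{Y},\psi)$ along a subsequence, first note $\bar{X}^{\varepsilon,\varphi^{\varepsilon}}-X^0=a(\varepsilon)\bar{Y}^{\varepsilon,\varphi^{\varepsilon}}\to 0$ in probability in $D([0,T]:\mathbb{R}^d)$ since $a(\varepsilon)\to 0$ and $\bar{Y}^{\varepsilon,\varphi^{\varepsilon}}$ is tight. Using the Lipschitz continuity of $Db$ from Condition \ref{cndnsnew}(b), a first-order Taylor expansion gives
\begin{equation*}
A^{\varepsilon,\varphi^{\varepsilon}}(t)=\int_0^t Db(X^0(s))\bar{Y}^{\varepsilon,\varphi^{\varepsilon}}(s)\,ds+R_1^{\varepsilon}(t),\qquad |R_1^{\varepsilon}(t)|\leq \tfrac{L_{Db}}{2}\,a(\varepsilon)\int_0^T|\bar{Y}^{\varepsilon,\varphi^{\varepsilon}}|^2 ds,
\end{equation*}
so $R_1^{\varepsilon}\to 0$ in probability by Lemma \ref{bdY_poi}. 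An entirely analogous Taylor expansion of $G(\cdot,y)$ (using $L_{DG}\in L^1(\nu)$ and the boundedness in Condition \ref{cndnsnew}(b)) shows $B^{\varepsilon,\varphi^{\varepsilon}}(t)=\int_0^t\!\!\int_{\mathbb{X}}D_xG(X^0(s),y)\bar{Y}^{\varepsilon,\varphi^{\varepsilon}}(s)\,\nu(dy)ds+R_2^{\varepsilon}(t)$ with $R_2^{\varepsilon}\to 0$. Finally, Lemma \ref{ctrllim} applied to $f(y,s)=G(X^0(s),y)$ (bounded componentwise by $(1+m_T)M_G(y)$ with $M_G\in L^2(\nu)\cap\mathcal{H}$) shows $C^{\varepsilon,\varphi^{\varepsilon}}(t)\to \int_{\mathbb{X}\times[0,t]}G(X^0(s),y)\psi(y,s)\nu(dy)ds$ along the subsequence where $\psi^{\varepsilon}1_{\{|\psi^{\varepsilon}|\leq\beta/a(\varepsilon)\}}\Rightarrow\psi$. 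Passing to the limit in the decomposition and invoking the joint continuity of the linear operators $\bar{Y}\mapsto\int_0^\cdot Db(X^0)\bar{Y}\,ds$ and $\bar{Y}\mapsto\int\!\!\int D_xG(X^0,y)\bar{Y}\,\nu(dy)ds$ on $D([0,T]:\mathbb{R}^d)$ (invoking Skorokhod representation if necessary), the limit $\bar{Y}$ satisfies \eqref{eq:finite_dim_limit} w.p.1.

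The main obstacle is controlling the two terms $\mathcal{E}_1^{\varepsilon,\varphi^{\varepsilon}}$ and $C^{\varepsilon,\varphi^{\varepsilon}}$, both of which pair a possibly-unbounded control with a Lipschitz/growth coefficient. The key leverage is that Condition \ref{cndnsnew}(a) places $L_G,M_G$ in $\mathcal{H}$, so Lemmas \ref{Ineq1}--\ref{ctrllim} are available; these are precisely the tools that turn the weak $L^2$-convergence of the truncated controls into strong enough integral estimates to (i) drive $\mathcal{E}_1^{\varepsilon,\varphi^{\varepsilon}}$ to zero, (ii) give uniform equicontinuity of $C^{\varepsilon,\varphi^{\varepsilon}}$, and (iii) pass to the limit in the $C$-term. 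Once these integral estimates are combined with the Taylor expansion and the a priori moment bound of Lemma \ref{bdY_poi}, the remaining steps are standard.
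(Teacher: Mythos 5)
Your proposal is correct and follows essentially the same route as the paper: the same five-term decomposition from Lemma \ref{bdY_poi}, the same use of \eqref{eq:bound_on_M} and \eqref{bdE} to kill $M^{\varepsilon,\varphi^{\varepsilon}}$ and $\mathcal{E}_1^{\varepsilon,\varphi^{\varepsilon}}$, the same equicontinuity estimates (Lemma \ref{Ineq2} for the $C$-term, the a priori moment bound for the drift terms), Taylor expansion of $b$ and $G$ with remainders controlled by Lemma \ref{bdY_poi}, and Lemma \ref{ctrllim} to pass to the limit in the control term. The only cosmetic difference is that you establish equicontinuity of $A^{\varepsilon,\varphi^{\varepsilon}}$ and $B^{\varepsilon,\varphi^{\varepsilon}}$ directly from the Lipschitz conditions before linearizing, while the paper linearizes first and then proves tightness of the linearized terms; both orderings work.
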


\begin{proof}
We will use the notation from the proof of Lemma \ref{bdY_poi}. Assume
without loss of generality that $\varepsilon \leq \varepsilon _{0}$. From %
\eqref{eq:eq1138} and (\ref{eq:bound_on_M}) we have that $\mathbb{\bar{E}}%
[\sup_{r\leq T}|M^{\varepsilon ,\varphi ^{\varepsilon }}(r)|^{2}]\rightarrow
0$ as $\varepsilon \rightarrow 0$. Also, since from Lemma \ref{bdY_poi} $%
\sup_{\varepsilon \leq \varepsilon _{0}}\mathbb{\bar{E}}\left[ \sup_{s\leq
T}(\bar{Y}^{\varepsilon ,\varphi ^{\varepsilon }}(s))^{2}\right] <\infty $, %
\eqref{bdE} implies that $\mathbb{\bar{E}}[\sup_{s\leq T}|\mathcal{E}%
_{1}^{\varepsilon ,\varphi ^{\varepsilon }}(s)|^{2}]\rightarrow 0$.

Noting that $\bar{X}^{\varepsilon ,\varphi ^{\varepsilon
}}(t)=X^{0}(t)+a(\varepsilon )\bar{Y}^{\varepsilon ,\varphi ^{\varepsilon
}}(t)$ we have by Taylor's theorem 
\begin{equation*}
G(\bar{X}^{\varepsilon ,\varphi ^{\varepsilon
}}(s),y)-G(X^{0}(s),y)=a(\varepsilon )D_{x}G(X^{0}(s),y)\bar{Y}^{\varepsilon
,\varphi ^{\varepsilon }}(t)+R^{\varepsilon ,\varphi ^{\varepsilon }}(s,y)
\end{equation*}%
where 
\begin{equation*}
|R^{\varepsilon ,\varphi ^{\varepsilon }}(s,y)|\leq
L_{DG}(y)a^{2}(\varepsilon )|\bar{Y}^{\varepsilon ,\varphi ^{\varepsilon
}}(s)|^{2}
\end{equation*}%
Hence 
\begin{equation*}
B^{\varepsilon ,\varphi ^{\varepsilon }}(t)=\int_{\mathbb{X}\times \lbrack
0,t]}D_{x}G(X^{0}(s),y)\bar{Y}^{\varepsilon ,\varphi ^{\varepsilon }}(s)\nu
(dy)ds+\mathcal{E}_{2}^{\varepsilon ,\varphi ^{\varepsilon }}(t),
\end{equation*}%
where with $K_{DG}=\int_{\mathbb{X}}L_{DG}(y)\nu (dy)$, 
\begin{equation*}
\sup_{r\leq T}|\mathcal{E}_{2}^{\varepsilon ,\varphi ^{\varepsilon
}}(r)|\leq K_{DG}\,a(\varepsilon )\int_{0}^{T}|\bar{Y}^{\varepsilon ,\varphi
^{\varepsilon }}(s)|^{2}ds.
\end{equation*}%
Thus using Lemma \ref{bdY_poi} again, $\mathbb{\bar{E}}[\sup_{r\leq T}|%
\mathcal{E}_{2}^{\varepsilon ,\varphi ^{\varepsilon }}(r)|]\rightarrow 0$.
Similarly, 
\begin{equation*}
A^{\varepsilon ,\varphi ^{\varepsilon }}(t)=\int_{0}^{t}Db(X^{0}(s))\bar{Y}%
^{\varepsilon ,\varphi }(s)ds+\mathcal{E}_{3}^{\varepsilon ,\varphi
^{\varepsilon }}(t)
\end{equation*}%
where $\mathbb{\bar{E}}[\sup_{r\leq T}|\mathcal{E}_{3}^{\varepsilon ,\varphi
^{\varepsilon }}(r)|]\rightarrow 0$.

Putting these estimates together we have from \eqref{eq:eq1109} 
\begin{align}
\bar{Y}^{\varepsilon ,\varphi ^{\varepsilon }}(t)& =\mathcal{E}^{\varepsilon
,\varphi ^{\varepsilon }}(t)+\int_{0}^{t}Db(X^{0}(s))\bar{Y}^{\varepsilon
,\varphi ^{\varepsilon }}(s)ds  \notag  \label{ybarep} \\
& \hspace{0.5cm}+\int_{\mathbb{X}\times \lbrack 0,t]}D_{x}G(X^{0}(s),y)\bar{Y%
}^{\varepsilon ,\varphi ^{\varepsilon }}(s)\nu (dy)ds+\int_{\mathbb{X}\times
\lbrack 0,t]}G(X^{0}(s),y)\psi ^{\varepsilon }(y,s)\nu (dy)ds
\end{align}%
where $\mathcal{E}^{\varepsilon ,\varphi ^{\varepsilon }}=M^{\varepsilon
,\varphi ^{\varepsilon }}+\mathcal{E}_{1}^{\varepsilon ,\varphi
^{\varepsilon }}+\mathcal{E}_{2}^{\varepsilon ,\varphi ^{\varepsilon }}+%
\mathcal{E}_{3}^{\varepsilon ,\varphi ^{\varepsilon }}\Rightarrow 0$.

We now prove tightness of 
\begin{equation*}
\tilde{B}^{\varepsilon,\varphi^{\varepsilon}}(\cdot)=\int_{\mathbb{X}%
\times\lbrack0,\cdot]}D_{x}G(X^{0}(s),y)\bar{Y}^{\varepsilon,\varphi
^{\varepsilon}}(s)\nu(dy)ds,\quad\
C^{\varepsilon,\varphi^{\varepsilon}}(\cdot)=\int_{\mathbb{X}%
\times\lbrack0,\cdot]}G(X^{0}(s),y)\psi^{\varepsilon }(y,s)\nu(dy)ds
\end{equation*}
and 
\begin{equation*}
\tilde{A}^{\varepsilon,\varphi^{\varepsilon}}(\cdot)=\int_{0}^{%
\cdot}Db(X^{0}(s))\bar{Y}^{\varepsilon,\varphi^{\varepsilon}}(s)ds.
\end{equation*}
Applying Lemma \ref{Ineq2} with $h=M_{G}$ 
\begin{align}
|C^{\varepsilon,\varphi^{\varepsilon}}(t+\delta)-C^{\varepsilon,\varphi
^{\varepsilon}}(t)| & =\int_{\mathbb{X}\times\lbrack
t,t+\delta]}|G(X^{0}(s),y)\|\psi^{\varepsilon}(y,s)|\nu(dy)ds  \notag \\
& \leq\left( 1+\sup_{0\leq s\leq T}|X_{s}^{0}|\right) \int_{\mathbb{X}%
\times\lbrack t,t+\delta]}M_{G}(y)|\psi^{\varepsilon}(y,s)|\nu (dy)ds  \notag
\\
& \leq\left( 1+\sup_{0\leq s\leq T}|X_{s}^{0}|\right) (\rho(1)\delta
^{1/2}+\vartheta(1)a(\varepsilon)).  \label{eq:eq724}
\end{align}
Tightness of $\{C^{\varepsilon,\varphi^{\varepsilon}}\}_{\varepsilon>0}$ in $%
C([0,T]:\mathbb{R}^{d})$ is now immediate.

Next we argue the tightness of $\tilde{B}^{\varepsilon,\varphi^{%
\varepsilon}} $. Recall that $m_{T}=\sup_{s\in\lbrack0,T]}|X^{0}(s)|$. Then,
for $0\leq t\leq t+\delta\leq T$ 
\begin{align*}
|\tilde{B}^{\varepsilon,\varphi^{\varepsilon}}(t+\delta)-\tilde{B}%
^{\varepsilon,\varphi^{\varepsilon}}(t)|^{2} & =\left( \int_{\mathbb{X}%
\times\lbrack t,t+\delta]}D_{x}G(X^{0}(s),y)\bar{Y}^{\varepsilon
,\varphi^{\varepsilon}}(s)\nu(dy)ds\right) ^{2} \\
& \leq\left( \left( \sup_{|x|\leq m_{T}}\int_{\mathbb{X}}|D_{x}G(x,y)|_{op}%
\nu(dy)\right) \int_{[t,t+\delta]}|\bar{Y}^{\varepsilon
,\varphi^{\varepsilon}}(s)|ds\right) ^{2} \\
& \leq K_{2}\sup_{t\leq T}|\bar{Y}^{\varepsilon,\varphi^{%
\varepsilon}}(t)|^{2}\delta,
\end{align*}
where $K_{2}=T\sup_{|x|\leq m_{T}}\int_{\mathbb{X}}|D_{x}G(x,y)|_{op}\nu(dy) 
$, which is finite from Condition \ref{cndnsnew} (b). Tightness of $\{\tilde{%
B}^{\varepsilon,\varphi^{\varepsilon}}\}_{\varepsilon>0}$ in $C([0,T]:%
\mathbb{R}^{d})$ now follows as a consequence of Lemma \ref{bdY_poi}.
Similarly it can be seen that $\tilde{A}^{\varepsilon,\varphi^{\varepsilon}}$
is tight in $C([0,T]:\mathbb{R}^{d})$ and consequently, $\bar{Y}%
^{\varepsilon,\varphi^{\varepsilon}}$ is tight in $D([0,T]:\mathbb{R}^{d}) $%
. Also, from Lemma \ref{sineqs2}(c), $\psi^{\varepsilon}1_{\{|\psi^{%
\varepsilon }|\leq\beta/a(\varepsilon)\}}$ takes values in $%
B_{2}((M\kappa_{2}(1))^{1/2})$ for all $\varepsilon>0$ and by the
compactness of the latter space the tightness of $\psi^{\varepsilon}1_{\{|%
\psi^{\varepsilon}|\leq\beta /a(\varepsilon)\}}$ is immediate. This
completes the proof of the first part of the lemma. Suppose now that $(\bar{Y%
}^{\varepsilon,\varphi^{\varepsilon}},\psi^{\varepsilon}1_{\{|\psi^{%
\varepsilon}|\leq\beta/a(\varepsilon)\}}) $ along a subsequence converges in
distribution to $(\bar{Y},\psi)$. From Lemma \ref{ctrllim} and the tightness
of $C^{\varepsilon,\varphi^{\varepsilon}}$ established above 
\begin{equation*}
\left( \bar{Y}^{\varepsilon,\varphi^{\varepsilon}},\int_{\mathbb{X}%
\times\lbrack0,\cdot]}G(X^{0}(s),y)\psi^{\varepsilon}(y,s)\nu(dy)ds\right )
\end{equation*}
converges in distribution, in $D([0,T]:\mathbb{R}^{2d})$, to $(\bar{Y},\int_{%
\mathbb{X}\times\lbrack0,\cdot]}G(X^{0}(s),y)\psi(y,s)\nu(dy)ds)$. The
result now follows on using this in \eqref{ybarep} and recalling that $%
\mathcal{E}^{\varepsilon,\varphi^{\varepsilon}}\Rightarrow0$.
\end{proof}

We now complete the proof of Theorem \ref{LDPpoifin}.\newline

\begin{proof}[Proof of Theorem \protect\ref{LDPpoifin}]
It suffices to show that Condition \ref{cond1} holds with $\mathcal{G}%
^{\varepsilon }$ and $\mathcal{G}_{0}$ defined as at the beginning of the
section. Part (a) of the condition was verified in Lemma \ref{lem:lem636}.
Consider now part (b). Fix $M\in (0,\infty )$ and $\beta \in (0,1]$. Let $%
\{\varphi ^{\varepsilon }\}_{\varepsilon >0}$ be such that for every $%
\varepsilon >0$, $\varphi ^{\varepsilon }\in \mathcal{U}_{+,\varepsilon
}^{M} $ and $\psi ^{\varepsilon }1_{\{|\psi ^{\varepsilon }|<\beta
/a(\varepsilon )\}}\Rightarrow \psi $ in $B_{2}((M\kappa _{2}(1))^{1/2})$,
where $\psi ^{\varepsilon }=(\varphi ^{\varepsilon }-1)/a(\varepsilon )$. To
complete the proof we need to show that 
\begin{equation*}
\mathcal{G}^{\varepsilon }(\varepsilon N^{\varepsilon ^{-1}\varphi
^{\varepsilon }})\Rightarrow \mathcal{G}_{0}(\psi ).  
\end{equation*}%
Recall that $\mathcal{G}^{\varepsilon }(\varepsilon N^{\varepsilon
^{-1}\varphi ^{\varepsilon }})=\bar{Y}^{\varepsilon ,\varphi ^{\varepsilon
}} $. From Lemma \ref{cont_lim_poi} $\{(\bar{Y}^{\varepsilon ,\varphi
^{\varepsilon }},\psi ^{\varepsilon }1_{\{|\psi ^{\varepsilon }|\leq \beta
/a(\varepsilon )\}})\ $ is tight in $D([0,T]:\mathbb{R}^{d})\times
B_{2}((M\kappa _{2}(1))^{1/2})$ and every limit point of $\bar{Y}%
^{\varepsilon ,\varphi ^{\varepsilon }}$ must equal $\mathcal{G}_{0}(\psi )$%
. The result follows.
\end{proof}

\subsection{Proof of Theorem \protect\ref{th:thequivratefn}}

Fix $\eta \in C([0,T]:\mathbb{R}^{d})$ and $\delta >0$. Let $u\in
L^{2}([0,T]:\mathbb{R}^{d})$ be such that 
\begin{equation*}
\frac{1}{2}\int_{0}^{T}\left\vert u(s)\right\vert ^{2}\ ds\leq I(\eta
)+\delta
\end{equation*}%
and $(\eta ,u)$ satisfy (\ref{eq:finite_dim_limit_u}). Define $\psi :\mathbb{%
X}\times \lbrack 0,T]\rightarrow \mathbb{R}$ by 
\begin{equation}
\psi (y,s)\doteq \sum_{i=1}^{d}u_{i}(s)e_{i}(y,s),\;(y,s)\in \mathbb{X}%
\times \lbrack 0,T].  \label{eq:eq628}
\end{equation}%
From the orthonormality of $e_{i}(\cdot ,s)$ it follows that 
\begin{equation}
\frac{1}{2}\int_{\mathbb{X}\times \lbrack 0,T]}|\psi |^{2}d\nu _{T}=\frac{1}{%
2}\int_{0}^{T}|u(s)|^{2}ds.  \label{eq:eq631}
\end{equation}%
Also 
\begin{align*}
\lbrack A(s)u(s)]_{i}& =\sum_{j=1}^{d}\langle G_{i}(X^{0}(s),\cdot
),e_{j}(\cdot ,s)\rangle _{L^{2}(\nu )}u_{j}(s) \\
& =\left\langle G_{i}(X^{0}(s),\cdot ),\sum_{j=1}^{d}e_{j}(\cdot
,s)u_{j}(s)\right\rangle _{L^{2}(\nu )} \\
& =\langle G_{i}(X^{0}(s),\cdot ),\psi (\cdot ,s)\rangle _{L^{2}(\nu )},
\end{align*}%
so that $A(s)u(s)=\int \psi (y,s)G(X^{0}(s),y)\nu (dy)ds$. Consequently $%
\eta $ satisfies (\ref{eq:finite_dim_limit}) with $\psi $ as in %
\eqref{eq:eq628}. Combining this with \eqref{eq:eq631} we have $\bar{I}(\eta
)\leq I(\eta )+\delta $. Since $\delta >0$ is arbitrary we have $\bar{I}%
(\eta )\leq I(\eta )$.

Conversely, suppose $\psi\in L^{2}(\nu_{T})$ is such that 
\begin{equation*}
\frac{1}{2}\int_{\mathbb{X}\times\lbrack0,T]}|\psi|^{2}d\nu_{T}\leq\bar {I}%
(\eta)+\delta
\end{equation*}
and (\ref{eq:finite_dim_limit}) holds. For $i=1,\ldots,d$ define $%
u_{i}:[0,T]\rightarrow\mathbb{R}$ by 
\begin{equation*}
u_{i}(s)=\langle \psi(\cdot,s),e_{i}(\cdot,s)\rangle_{L^{2}(\nu)}.
\end{equation*}
Note that with $u=(u_{1},\ldots,u_{d})$, 
\begin{align}
\frac{1}{2}\int_{0}^{T}|u(s)|^{2}ds & =\frac{1}{2}\int_{0}^{T}\sum_{j=1}^{d}%
\langle \psi(\cdot,s),e_{j}(\cdot,s)\rangle_{L^{2}(\nu)}^{2}  \notag \\
& \leq\frac{1}{2}\int_{0}^{T}\int_{\mathbb{X}}\psi^{2}(y,s)\nu (dy)ds  \notag
\\
& \leq\bar{I}(\eta)+\delta.  \label{eq:eq646}
\end{align}
For $s\in\lbrack0,T]$, let $\{e_{j}(\cdot,s)\}_{j=d+1}^{\infty}$ be defined
in such a manner that $\{e_{j}(\cdot,s)\}_{j=1}^{\infty}$ is a complete
orthonormal system in $L^{2}(\nu)$. Then, for every $s\in\lbrack0,T]$ 
\begin{align*}
\lbrack A(s)u(s)]_{i} & =\sum_{j=1}^{d}\langle G_{i}(X^{0}(s),\cdot
),e_{j}(\cdot,s)\rangle_{L^{2}(\nu)}\langle \psi(\cdot,s),e_{j}(\cdot
,s)\rangle_{L^{2}(\nu)} \\
& =\sum_{j=1}^{\infty}\langle G_{i}(X^{0}(s),\cdot),e_{j}(\cdot
,s)\rangle_{L^{2}(\nu)}\langle
\psi(\cdot,s),e_{j}(\cdot,s)\rangle_{L^{2}(\nu )} \\
& =\langle G_{i}(X^{0}(s),\cdot),\psi(\cdot,s)\rangle_{L^{2}(\nu)}
\end{align*}
where the second equality follows on observing that $G_{i}(X^{0}(s),\cdot)$
is in the linear span of $\{e_{j}(\cdot,s)\}_{j=1}^{d}$ for every $%
i=1,\ldots,d$. So $A(s)u(s)=\int\psi(y,s)G(X^{0}(s),y)\nu(dy)ds$ and
therefore $(\eta,u)$ satisfy (\ref{eq:finite_dim_limit_u}). Combining this
with \eqref{eq:eq646} we get $I(\eta)\leq\bar{I}(\eta)+\delta$. Since $%
\delta>0$ is arbitrary, $I(\eta)\leq\bar{I}(\eta)$ which completes the proof.

\section{Proofs for the Infinite Dimensional Problem (Theorem \protect\ref%
{LDPpoi_inf})}

\label{sec:sec748} From Theorem \ref{Thm: strsol} there is a measurable map $%
\mathcal{G}^{\varepsilon }:\mathbb{M}\rightarrow D([0,T]:\Phi _{-q})$ such
that $Y^{\varepsilon }=\mathcal{G}^{\varepsilon }(\varepsilon N^{\varepsilon
^{-1}})$. Also, in Theorem \ref{etaeqexist} below we will show that for
every $\psi \in L^{2}(\nu _{T})$ there is a unique solution of %
\eqref{nueq_nucl}. We denote this unique solution as $\mathcal{G}_{0}(\psi )$%
. In order to prove the theorem it suffices to show that Condition \ref%
{gencond} holds with $\mathcal{G}^{\varepsilon }$ and $\mathcal{G}_{0}$
defined as above.

Recall that Conditions \ref{bGcond_nucl} and \ref{bGcond_nuclNew} involve
numbers $p<q<q_{1}$. We start by proving the unique solvability of %
\eqref{nueq_nucl}.

\begin{theorem}
\label{etaeqexist} Suppose Conditions \ref{bGcond_nucl} and \ref%
{bGcond_nuclNew} hold. Then for every $\psi\in L^{2}(\nu_{T})$, there exists
a unique $\eta_{\psi}\in C([0,T],\Phi_{-q_{1}})$ that solves %
\eqref{nueq_nucl}. Furthermore, for every $M\in(0,\infty)$, $\sup_{\psi\in
B_{2}(M)}\sup_{0\leq t\leq T}\Vert\eta_{\psi}(t)\Vert_{-q}<\infty$.
\end{theorem}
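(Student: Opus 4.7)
The equation \eqref{nueq_nucl} is a linear inhomogeneous evolution equation in $\eta$ driven by the forcing
$$F_\psi(t) \doteq \int_{\mathbb{X}\times[0,t]} G(X^0(s),y)\psi(y,s)\,\nu(dy)\,ds.$$
My plan is to establish existence by a Galerkin approximation combined with energy estimates, uniqueness by an energy estimate in $\Phi_{-q_1}$, and the uniform bound on $\|\eta_\psi\|_{-q}$ as a byproduct of the existence argument. As a preliminary step, Condition \ref{bGcond_nucl}(b) together with Cauchy--Schwarz gives
$$\|F_\psi(t) - F_\psi(s)\|_{-p} \leq (1+m_T)\|M_G\|_2 \|\psi\|_{L^2(\nu_T)}\,|t-s|^{1/2},$$
so $F_\psi \in C([0,T],\Phi_{-p})$, and in particular $\sup_{\psi\in B_2(M)}\sup_t \|F_\psi(t)\|_{-p}$ is bounded by a constant depending only on $M$, $m_T$ and $\|M_G\|_2$.

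For existence I approximate $\eta$ by projecting onto $\Phi^{(n)} \doteq \mathrm{span}\{\phi_1,\ldots,\phi_n\} \subset \Phi$. Writing $P_n$ for this Galerkin projection (of operator norm $\le 1$ on each $\Phi_{-r}$ because $\{\phi_j\}$ is a COS in every $\Phi_r$), the approximate equation
$$\eta_n(t) = P_n F_\psi(t) + \int_0^t P_n\bigl[A_{X^0(s)}(\eta_n(s)) + B_{X^0(s)}(\eta_n(s))\bigr]\,ds,$$
with $A_v$ as in Condition \ref{bGcond_nuclNew}(b) and $B_v(\eta) \doteq \int_{\mathbb{X}} D_x G(v,y)\eta\,\nu(dy)$, is a finite-dimensional linear ODE with coefficients continuous in $s$ and admits a unique $C^1([0,T],\Phi^{(n)})$-solution. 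Note that $\|B_v(\eta)\|_{-q_1} \leq M_{DG}^* \|\eta\|_{-q_1}$ follows by duality from \eqref{eq:225a}. Differentiating $\|\eta_n(t)\|_{-q_1}^2$ and applying the dissipativity bound \eqref{eq: eq429b} together with this estimate on $B_v$ and the bound on $F_\psi$ above, Gronwall yields a uniform bound on $\sup_n\sup_t\|\eta_n(t)\|_{-q_1}$ depending only on $M$ and $m_T$.

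The main obstacle is the corresponding uniform bound in the smaller space $\Phi_{-q}$, needed both for the second conclusion of the theorem and for the limit equation to remain well defined: Condition \ref{bGcond_nuclNew}(b) only provides $A_v:\Phi_{-q}\to \Phi_{-q_1}$, so to identify the limit of $A_{X^0(s)}(\eta_n(s))$ one needs $\eta_n$ bounded in $\Phi_{-q}$. I would use \eqref{eq: eq429c} in the same spirit as the $\Phi_{-p}$-energy estimate of \cite{KaXi95} for the nonlinear equation \eqref{eq:sdeg}, now applied to the linearization: apply it with $\phi = \eta_n(t) - P_n F_\psi(t) \in \Phi^{(n)}\subset \Phi$ and $\zeta = P_n F_\psi(t) \in \Phi_{-p}$, combine with the $\Phi_{-q}$-bound on $B_v$ coming from the second inequality in \eqref{eq:225a}, and invoke Gronwall to obtain $\sup_{\psi\in B_2(M)}\sup_n\sup_t \|\eta_n(t)\|_{-q} < \infty$.

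Given these uniform bounds, weak-$*$ compactness in $L^\infty([0,T],\Phi_{-q})$ yields a subsequence along which $\eta_n \to \eta$ in the weak-$*$ sense; by linearity and the $\Phi_{-q}\to\Phi_{-q_1}$ continuity of $A_v$ and $B_v$, together with the strong convergence $P_n F_\psi \to F_\psi$ in $C([0,T],\Phi_{-p})$, one passes to the limit in the weak form \eqref{limpt_poi_inf} to identify $\eta$ as a solution of \eqref{nueq_nucl} in $\Phi_{-q_1}$; continuity in $\Phi_{-q_1}$ is immediate from the integral representation, and the claimed $\Phi_{-q}$-bound is inherited by weak-$*$ lower semicontinuity. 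For uniqueness, given two solutions $\eta^{(1)},\eta^{(2)}$ in $C([0,T],\Phi_{-q_1})$ (necessarily with $\eta^{(i)}(t)\in \Phi_{-q}$ for the equation to be meaningful), the difference $\Delta$ is differentiable a.e.\ in $\Phi_{-q_1}$ with $\dot\Delta(t) = A_{X^0(t)}(\Delta(t)) + B_{X^0(t)}(\Delta(t))$, and \eqref{eq: eq429b} together with the $\Phi_{-q_1}$-bound on $B_v$ give
$$\tfrac{d}{dt}\|\Delta(t)\|_{-q_1}^2 \leq 2(L_A + M_{DG}^*)\|\Delta(t)\|_{-q_1}^2;$$
Gronwall applied with $\Delta(0)=0$ then yields $\Delta \equiv 0$.
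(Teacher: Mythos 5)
Your argument is correct in substance and rests on exactly the same structural inequalities as the paper's proof, but it takes a more self-contained route. The paper first removes the inhomogeneity by the substitution $\bar{\eta}=\eta-\tilde{\eta}$ with $\tilde{\eta}(\cdot)=\int_{\mathbb{X}\times[0,\cdot]}G(X^{0}(s),y)\psi(y,s)\nu(dy)ds$ (your $F_\psi$), and then verifies four hypotheses (continuity, $\Phi_{-q}$-coercivity via \eqref{eq: eq429c}, $\Phi_{-q_1}$-growth via \eqref{eq: eq429a} and \eqref{eq:225a}, and $\Phi_{-q_1}$-monotonicity via \eqref{eq: eq429b}) so as to invoke the abstract existence--uniqueness theorem for ODEs in duals of nuclear spaces (Theorem 3.7 of \cite{BCD}); the uniform $\Phi_{-q}$ bound over $B_2(M)$ is read off from that theorem plus the bound $\sup_t\|\tilde\eta(t)\|_{-p}\leq\sqrt{T}m_T^1$. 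You instead keep the forcing in the equation and reprove the abstract result by hand: Galerkin projection, a $\Phi_{-q_1}$ energy estimate from \eqref{eq: eq429b}, a $\Phi_{-q}$ energy estimate from \eqref{eq: eq429c} applied with $\phi=\eta_n-P_nF_\psi$ and $\zeta=P_nF_\psi$ (which is precisely the paper's verification of its coercivity hypothesis, with the shift performed pointwise rather than globally), and weak-$*$ compactness. What your version buys is independence from the cited theorem and an explicit exhibition of where each of \eqref{eq: eq429a}--\eqref{eq: eq429c} and \eqref{eq:225a} enters; what it costs is having to re-justify the standard Galerkin technicalities (measurability in $s$ of the coefficients, which does follow from the Lipschitz bounds in Condition \ref{bGcond_nuclNew}(a),(c) and the continuity of $X^{0}$, and the projection bookkeeping).

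One step to tighten is uniqueness. The chain rule $\frac{d}{dt}\|\Delta(t)\|_{-q_1}^{2}=2\langle\dot\Delta(t),\Delta(t)\rangle_{-q_1}$ requires $\Delta$ to be absolutely continuous as a $\Phi_{-q_1}$-valued function, i.e.\ that $\Delta(t)=\int_0^t\bigl[A_{X^{0}(s)}(\Delta(s))+B_{X^{0}(s)}(\Delta(s))\bigr]ds$ holds as a Bochner integral in $\Phi_{-q_1}$; by \eqref{eq: eq429a} this needs $s\mapsto\|\Delta(s)\|_{-q}$ to be (measurable and) integrable, which does not follow merely from $\Delta\in C([0,T],\Phi_{-q_1})$ with $\Delta(s)\in\Phi_{-q}$ pointwise. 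You should either build this integrability into the solution concept for \eqref{limpt_poi_inf} (as the class of solutions in Theorem 3.7 of \cite{BCD} effectively does) or restrict the uniqueness claim to solutions with $\sup_{t}\|\eta(t)\|_{-q}<\infty$, which is all that is needed for the rest of the paper since the solutions produced by your existence argument have this property.
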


\begin{proof}
Fix $M\in (0,\infty )$, and for $\psi \in B_{2}(M)$, let $\tilde{\eta}_{\psi
}(\cdot )=\int_{\mathbb{X}\times \lbrack 0,\cdot ]}G(X^{0}(s),y)\psi
(y,s)\nu (dy)ds$. By an application of the Cauchy-Schwarz inequality,
Condition \ref{bGcond_nucl}(b) and the definition of $m_{T}$ in %
\eqref{eq:eq336}, we see that for every such $\psi $ and $0\leq s\leq t\leq
T $ 
\begin{align}
\left\Vert \tilde{\eta}_{\psi }(t)-\tilde{\eta}_{\psi }(s)\right\Vert _{-p}&
=\left\Vert \int_{\mathbb{X}\times \lbrack s,t]}G(X^{0}(r),y)\psi (y,r)\nu
(dy)dr\right\Vert _{-p}  \label{eq:eq624} \\
& \leq (t-s)^{1/2}M(1+m_{T})\left( \int_{\mathbb{X}}M_{G}^{2}(y)\nu
(dy)\right) ^{1/2}  \notag \\
& =(t-s)^{1/2}m_{T}^{1},  \notag
\end{align}%
where%
\begin{equation}
m_{T}^{1}\doteq M(1+m_{T})\left( \int_{\mathbb{X}}M_{G}^{2}(y)\nu
(dy)\right) ^{1/2}.  \label{eq:def_msup1}
\end{equation}%
This shows that $\tilde{\eta}_{\psi }$ is in $C([0,T]:\Phi _{-p})$.
Henceforth we suppress $\psi $ from the notation unless needed. With $A_{v}$
as in Condition \ref{bGcond_nuclNew}, define $\tilde{b}:[0,T]\times \Phi
_{-q}\rightarrow \Phi _{-q_{1}}$ by 
\begin{equation}
\tilde{b}(s,v)\doteq A_{X^{0}(s)}(v+\tilde{\eta}(s))+\int_{\mathbb{X}%
}D_{x}\left( G(X^{0}(s),y)[\cdot ]\right) [v+\tilde{\eta}(s)]\nu
(dy),\;(s,v)\in \lbrack 0,T]\times \Phi _{-q}.  \label{eq:eq630}
\end{equation}%
The right side in \eqref{eq:eq630} indeed defines an element in $\Phi
_{-q_{1}}$ as is seen from the definition of $A_{v}$ and the estimate in %
\eqref{eq:225a}. Note that $\eta $ solves \eqref{nueq_nucl} if and only if $%
\bar{\eta}=\eta -\tilde{\eta}$ solves the equation 
\begin{equation}
\bar{\eta}(t)=\int_{0}^{t}\tilde{b}(s,\bar{\eta}(s))ds.  \label{eq:eq809}
\end{equation}%
We now argue that (\ref{eq:eq809}) has a unique solution, namely there is a
unique $\bar{\eta}\in C([0,T]:\Phi _{-q_{1}})$ such that for all $\phi \in
\Phi $ 
\begin{equation*}
\bar{\eta}(t)[\phi ]=\int_{0}^{t}\tilde{b}(s,\bar{\eta}(s))[\phi ]ds,\;t\in
\lbrack 0,T].
\end{equation*}%
For this, in view of Theorem 3.7 in \cite{BCD}, it suffices to check that
for some $K<\infty $, $\tilde{b}$ satisfies the following properties.

\begin{description}
\item {(a)} For all $t\in\lbrack0,T]$ and $u\in\Phi_{-q}$, $\tilde{b}%
(t,u)\in\Phi_{-q_{1}}$, and the map $u\mapsto\tilde{b}(t,u)$ is continuous.

\item {(b)} For all $t\in\lbrack0,T]$, and $\phi\in\Phi$, $2\tilde
b(t,\phi)[\theta_{q}\phi]\leq K(1+\|\phi\|_{-q}^{2}). $

\item {(c)} For all $t\in\lbrack0,T]$, and $u\in\Phi_{-q}$, $\|\tilde
b(t,u)\|_{-q_{1}}^{2}\leq K(1+\|u\|_{-q}^{2}). $

\item {(d)} For all $t\in\lbrack0,T]$, and $u_{1},u_{2}\in\Phi_{-q}$, 
\begin{equation}  \label{eq:5.4a}
2\langle \tilde b(t,u_{1})-\tilde b(t,u_{2}),u_{1}-u_{2}\rangle_{-q_{1}}
\leq K\|u_{1}-u_{2}\|_{-q_{1}}^{2}.
\end{equation}
\end{description}

Consider first part (a). For $s\in \lbrack 0,T]$ and $\eta \in \Phi _{-q}$,
define $\tilde{A}_{X^{0}(s)}(\eta ):\Phi _{q}\rightarrow \mathbb{R}$ by 
\begin{equation*}
\tilde{A}_{X^{0}(s)}(\eta )[\phi ]=\int_{\mathbb{X}}D_{x}\left(
G(X^{0}(s),y)[\phi ]\right) [\eta ]\nu (dy).
\end{equation*}%
Note that $\tilde{b}(s,v)=A_{X^{0}(s)}(v+\tilde{\eta}(s))+\tilde{A}%
_{X^{0}(s)}(v+\tilde{\eta}(s))$. Let $K_{1}=\max \{\sqrt{T}m_{T}^{1},1\}$,
with $m_{T}^{1}$ defined in (\ref{eq:def_msup1}). Then using Condition \ref%
{bGcond_nuclNew}(c) and (\ref{eq:eq624}) we have, for each fixed $s\in
\lbrack 0,T]$, that 
\begin{equation}
|\tilde{A}_{X^{0}(s)}(v+\tilde{\eta}(s))[\phi ]|\leq K_{1}M_{DG}^{\ast
}(1+\Vert v\Vert _{-q_{1}})\Vert \phi \Vert _{q_{1}},\;\phi \in \Phi
_{q_{1}},v\in \Phi _{-q}.  \label{eq:eq706}
\end{equation}%
Consequently, for each fixed $s$, $v\mapsto \tilde{A}_{X^{0}(s)}(v+\tilde{%
\eta}(s))$ is a map from $\Phi _{-q}$ to $\Phi _{-q_{1}}$. Also, from
Condition \ref{bGcond_nuclNew}(b) for each fixed $s$, $v\mapsto
A_{X^{0}(s)}(v+\tilde{\eta}(s))$ is a map from $\Phi _{-q}$ to $\Phi
_{-q_{1}}$. By the same condition the map $v\mapsto A_{X^{0}(s)}(v+\tilde{%
\eta}(s))$ is continuous for each $s$. Also, from Condition \ref%
{bGcond_nuclNew}(c) we have for each fixed $s\in \lbrack 0,T]$ and $%
v,v^{\prime }\in \Phi _{-q}$ 
\begin{equation}
|\tilde{A}_{X^{0}(s)}(v+\tilde{\eta}(s))[\phi ]-\tilde{A}_{X^{0}(s)}(v^{%
\prime }+\tilde{\eta}(s))[\phi ]|\leq M_{DG}^{\ast }\Vert v-v^{\prime }\Vert
_{-q_{1}}\Vert \phi \Vert _{q_{1}},\;\phi \in \Phi _{q_{1}}.
\label{eq:eq706b}
\end{equation}%
Consequently the map $v\mapsto \tilde{A}_{X^{0}(s)}(v+\tilde{\eta}(s))$ is
continuous as well. This proves (a).

For (b) note that again using Condition \ref{bGcond_nuclNew}(c) and (\ref%
{eq:eq624}), for $\phi\in\Phi$, 
\begin{align}
\tilde{A}_{X^{0}(s)}(\phi+\tilde{\eta}(s))[\theta_{q}\phi]\leq & \int_{%
\mathbb{X}}M_{DG}(X^{0}(s),y)\Vert\theta_q\phi\Vert_{q}\Vert\phi+\tilde{\eta 
}(s)\Vert_{-q}\nu(dy)  \notag \\
\leq & M_{DG}^{\ast}(m_{T}^{1}\sqrt{T}+\Vert\phi\Vert_{-q})\Vert\phi
\Vert_{-q}.  \label{eq:eq231}
\end{align}
Also, from \eqref{eq: eq429c} 
\begin{equation*}
2A_{X^{0}(s)}(\phi+\tilde{\eta}(s))[\theta_{q}\phi]\leq C_{A}(\Vert\phi
\Vert_{-q}+\sqrt{T}m_{T}^{1})\Vert\phi\Vert_{-q}.
\end{equation*}
Combining this estimate with \eqref{eq:eq231} we have (b).

Consider now part (c). Note that, from \eqref{eq:eq706} we have 
\begin{equation*}
\Vert\tilde{A}_{X^{0}(s)}(v+\tilde{\eta}(s))\Vert_{-q_{1}}\leq
K_{1}M_{DG}^{\ast}(1+\Vert v\Vert_{-q_{1}}),\;v\in\Phi_{-q}.
\end{equation*}
Also, from \eqref{eq: eq429a} and (\ref{eq:eq624}) 
\begin{equation*}
\Vert A_{X^{0}(s)}(v+\tilde{\eta}(s))\Vert_{-q_{1}}\leq M_{A}(1+\sqrt{T}%
m_{1}^{T}+\Vert v\Vert_{-q}),\;v\in\Phi_{-q}.
\end{equation*}
Combining the last two estimates we have 
\begin{equation}
\Vert\tilde{b}(t,v)\Vert_{-q_{1}}\leq(K_{1}M_{DG}^{\ast}+M_{A})(1+\sqrt {T}%
m_{1}^{T}+\Vert v\Vert_{-q}),\;v\in\Phi_{-q}  \label{eq:eq320O5}
\end{equation}
which verifies part (c).

Finally, for (d) note that from \eqref{eq: eq429b}, for all $%
u_{1},u_{2}\in\Phi_{-q}$ and $s\in\lbrack0,T]$ 
\begin{equation}  \label{eq:5.8a}
\langle u_{1}-u_{2},A_{X^{0}(s)}(u_{1}+\tilde{\eta}(s))-A_{X^{0}(s)}(u_{2}+%
\tilde{\eta}(s))\rangle_{-q_{1}}\leq L_{A}\Vert
u_{1}-u_{2}\Vert_{-q_{1}}^{2}.
\end{equation}
Also, from \eqref{eq:eq706b}, for all $u_{1},u_{2}\in\Phi_{-q}$ and $%
s\in\lbrack0,T]$ 
\begin{equation}
\langle u_{1}-u_{2},\tilde{A}_{X^{0}(s)}(u_{1}+\tilde{\eta}(s))-\tilde {A}%
_{X^{0}(s)}(u_{2}+\tilde{\eta}(s))\rangle_{-q_{1}}\leq M_{DG}^{\ast}\Vert
u_{1}-u_{2}\Vert_{-q_{1}}^{2}.  \label{eq:eq325}
\end{equation}
Part (d) now follows on combining these two displays.

As noted earlier, we now have from Theorem 3.7 in \cite{BCD} that %
\eqref{eq:eq809} and therefore \eqref{limpt_poi_inf} has a unique solution
in $C([0,T], \Phi_{-q_{1}})$. Also, from the same theorem it follows that 
\begin{equation*}
\sup_{\psi \in B_2(M)} \sup_{0 \leq t \leq T} \|\bar\eta(t)\|_{-q} < \infty.
\end{equation*}
The second part of the theorem is now immediate on noting that 
\begin{equation*}
\sup_{\psi\in B_{2}(M)} \sup_{0 \leq t \leq T}\|\eta_{\psi}(t)\|_{-q} \leq%
\sqrt{T} m_{1}^{T} + \sup_{\psi \in B_2(M)} \sup_{0 \leq t \leq
T}\|\bar\eta(t)\|_{-q}.
\end{equation*}
\end{proof}

The following lemma verifies part (a) of Condition \ref{gencond}.

\begin{lemma}
\label{lem:lem636inf} Suppose that Conditions \ref{bGcond_nucl} and \ref%
{bGcond_nuclNew} hold. Fix $M\in(0,\infty)$ and $g^{\varepsilon},g\in
B_{2}(M)$ such that $g^{\varepsilon}\rightarrow g$. Let $\mathcal{G}_{0}$ be
the mapping that was shown to be well defined in Theorem \ref{etaeqexist}.
Then $\mathcal{G}_0(g^{\varepsilon})\rightarrow\mathcal{G}_{0}(g)$.
\end{lemma}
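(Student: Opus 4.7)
The plan is to mimic the decomposition used in the proof of Theorem~\ref{etaeqexist} and then run a Gronwall argument. Write $\eta^{\varepsilon }=\mathcal{G}_{0}(g^{\varepsilon })$, $\eta =\mathcal{G}_{0}(g)$, and for $h\in L^{2}(\nu _{T})$ put
\begin{equation*}
\tilde{\eta}_{h}(t)\doteq \int_{\mathbb{X}\times [0,t]}G(X^{0}(s),y)h(y,s)\,\nu (dy)\,ds,\qquad \bar{\eta}_{h}\doteq \eta _{h}-\tilde{\eta}_{h},
\end{equation*}
so that $\bar{\eta}_{h}$ solves the linear equation \eqref{eq:eq809} with driving term $\tilde{\eta}_{h}$, i.e.\ $\bar{\eta}_{h}(t)=\int_{0}^{t}\tilde{b}_{h}(s,\bar{\eta}_{h}(s))ds$ where $\tilde{b}_{h}(s,v)=A_{X^{0}(s)}(v+\tilde{\eta}_{h}(s))+\tilde{A}_{X^{0}(s)}(v+\tilde{\eta}_{h}(s))$, in the notation of Theorem~\ref{etaeqexist}. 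The proof reduces to (i) showing $\tilde{\eta}_{g^{\varepsilon }}\rightarrow \tilde{\eta}_{g}$ in $C([0,T]:\Phi _{-q})$, and (ii) propagating this to $\bar{\eta}_{g^{\varepsilon }}\rightarrow \bar{\eta}_{g}$ in $C([0,T]:\Phi _{-q_{1}})$.

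For step (i), I first note that \eqref{eq:eq624} gives, uniformly in $\varepsilon $ and uniformly in $t,s\in [0,T]$,
\begin{equation*}
\|\tilde{\eta}_{g^{\varepsilon }}(t)\|_{-p}\leq \sqrt{T}\,m_{T}^{1},\qquad \|\tilde{\eta}_{g^{\varepsilon }}(t)-\tilde{\eta}_{g^{\varepsilon }}(s)\|_{-p}\leq |t-s|^{1/2}m_{T}^{1},
\end{equation*}
so $\{\tilde{\eta}_{g^{\varepsilon }}\}$ is bounded and equicontinuous in $\Phi _{-p}$. Next, for each fixed $\phi \in \Phi $ and $t\in [0,T]$, the function $(y,s)\mapsto 1_{[0,t]}(s)G(X^{0}(s),y)[\phi ]$ lies in $L^{2}(\nu _{T})$ (by Condition~\ref{bGcond_nucl}(b) and the bound $m_{T}<\infty $), so the weak convergence $g^{\varepsilon }\rightarrow g$ in $L^{2}(\nu _{T})$ yields $\tilde{\eta}_{g^{\varepsilon }}(t)[\phi ]\rightarrow \tilde{\eta}_{g}(t)[\phi ]$, i.e.\ weak convergence in $\Phi _{-p}$ for each fixed $t$. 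Since the embedding $\Phi _{-p}\hookrightarrow \Phi _{-q}$ is Hilbert--Schmidt and hence compact, this weak convergence upgrades to strong convergence in $\Phi _{-q}$, and combined with the Arzel\`{a}--Ascoli type argument supplied by the equicontinuity estimate above (which transfers from $\Phi _{-p}$ to $\Phi _{-q}$ by continuity of the embedding), one concludes
\begin{equation*}
\sup_{0\leq t\leq T}\|\tilde{\eta}_{g^{\varepsilon }}(t)-\tilde{\eta}_{g}(t)\|_{-q}\longrightarrow 0.
\end{equation*}

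For step (ii), set $w^{\varepsilon }(t)=\bar{\eta}_{g^{\varepsilon }}(t)-\bar{\eta}_{g}(t)\in \Phi _{-q_{1}}$. Using the linearity of $A_{X^{0}(s)}$ and $\tilde{A}_{X^{0}(s)}$ in their argument, decompose
\begin{equation*}
\tilde{b}_{g^{\varepsilon }}(s,\bar{\eta}_{g^{\varepsilon }}(s))-\tilde{b}_{g}(s,\bar{\eta}_{g}(s))=\bigl[A_{X^{0}(s)}+\tilde{A}_{X^{0}(s)}\bigr]w^{\varepsilon }(s)+\bigl[A_{X^{0}(s)}+\tilde{A}_{X^{0}(s)}\bigr]\bigl(\tilde{\eta}_{g^{\varepsilon }}(s)-\tilde{\eta}_{g}(s)\bigr).
\end{equation*}
Computing $\frac{d}{dt}\|w^{\varepsilon }(t)\|_{-q_{1}}^{2}$ and using \eqref{eq:5.8a} together with \eqref{eq:eq325} for the first bracket (which gives $\langle w^{\varepsilon },(A+\tilde{A})w^{\varepsilon }\rangle _{-q_{1}}\leq (L_{A}+M_{DG}^{\ast })\|w^{\varepsilon }\|_{-q_{1}}^{2}$), and the linear-in-$\eta $ forms of \eqref{eq: eq429a} and \eqref{eq:eq706} (note $A_{v},\tilde{A}_{v}$ are linear in $\eta $, so the ``$1+$'' can be scaled away) together with Cauchy--Schwarz for the second bracket, one obtains
\begin{equation*}
\|w^{\varepsilon }(t)\|_{-q_{1}}^{2}\leq C_{1}\int_{0}^{t}\|w^{\varepsilon }(s)\|_{-q_{1}}^{2}ds+C_{2}\int_{0}^{T}\|\tilde{\eta}_{g^{\varepsilon }}(s)-\tilde{\eta}_{g}(s)\|_{-q}^{2}ds,
\end{equation*}
for constants $C_{1},C_{2}$ independent of $\varepsilon $. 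Gronwall's inequality combined with step (i) then gives $\sup_{t}\|w^{\varepsilon }(t)\|_{-q_{1}}\rightarrow 0$, and since $\|\cdot \|_{-q_{1}}\lesssim \|\cdot \|_{-q}$, we also have $\sup_{t}\|\tilde{\eta}_{g^{\varepsilon }}(t)-\tilde{\eta}_{g}(t)\|_{-q_{1}}\rightarrow 0$, whence $\eta ^{\varepsilon }\rightarrow \eta $ in $C([0,T]:\Phi _{-q_{1}})$.

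The main obstacle is step (i): extracting \emph{uniform-in-$t$ strong} convergence in $\Phi _{-q_{1}}$ from only \emph{weak} $L^{2}(\nu _{T})$ convergence of the controls. The key inputs are the equicontinuity and uniform boundedness of $\tilde{\eta}_{g^{\varepsilon }}$ in the tighter space $\Phi _{-p}$, which allow us to invoke the compactness provided by the Hilbert--Schmidt embeddings assumed in Condition~\ref{bGcond_nucl} and Condition~\ref{bGcond_nuclNew}; without these, weak $L^{2}$ convergence of the controls would not suffice.
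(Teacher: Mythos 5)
Your proof is correct and follows essentially the same route as the paper's: decompose $\eta_{\psi}=\bar{\eta}_{\psi}+\tilde{\eta}_{\psi}$, obtain $\tilde{\eta}_{g^{\varepsilon}}\rightarrow\tilde{\eta}_{g}$ in $C([0,T]:\Phi_{-q})$ from the equicontinuity estimate \eqref{eq:eq624} together with the compactness of the embedding $\Phi_{-p}\hookrightarrow\Phi_{-q}$, and then close with a Gronwall argument in $\Phi_{-q_{1}}$ using \eqref{eq:5.8a}, \eqref{eq:eq325} and the growth bounds. The only (harmless) difference is bookkeeping in step (ii): you split the drift difference using the linearity of $A_{v}$ and $\tilde{A}_{v}$ and scale away the ``$1+$'' in \eqref{eq: eq429a}, whereas the paper adds and subtracts $\tilde{\eta}-\tilde{\eta}_{\varepsilon}$ inside the inner product and invokes the affine growth bound together with the uniform bound \eqref{eq:eq705}.
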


\begin{proof}
From \eqref{eq:eq624} and the compact embedding of $\Phi _{-p}$ into $\Phi
_{-q}$ we see that the collection 
\begin{equation*}
\left\{ \tilde{\eta}_{\varepsilon }(\cdot )=\int_{\mathbb{X}\times \lbrack
0,\cdot ]}G(X^{0}(r),y)g^{\varepsilon }(y,r)\nu (dy)dr\right\} _{\varepsilon
>0}
\end{equation*}%
is precompact in $C([0,T]:\Phi _{-q})$. Combining this with the convergence $%
g^{\varepsilon }\rightarrow g$ and the fact that $(s,y)\mapsto
G(X^{0}(s),y)[\phi ]$ is in $L^{2}(\nu _{T})$ for every $\phi \in \Phi $, we
see that 
\begin{equation}
\tilde{\eta}_{\varepsilon }\rightarrow \tilde{\eta}\mbox{ as }\varepsilon
\rightarrow 0\mbox{ in }C([0,T]:\Phi _{-q})  \label{eq:eq721}
\end{equation}%
where $\tilde{\eta}=\int_{\mathbb{X}\times \lbrack 0,\cdot
]}G(X^{0}(r),y)g(y,r)\nu (dy)dr$. Next, let $\eta _{\varepsilon }$ denote
the unique solution of \eqref{nueq_nucl} with $\psi $ replaced by $%
g^{\varepsilon }$ and, as in the proof of Theorem \ref{etaeqexist}, define $%
\bar{\eta}_{\varepsilon }=\eta _{\varepsilon }-\tilde{\eta}_{\varepsilon }$.
From Theorem \ref{etaeqexist} 
\begin{equation}
M_{\bar{\eta}}=\sup_{\varepsilon >0}\sup_{0\leq t\leq T}\Vert \bar{\eta}%
_{\varepsilon }(t)\Vert _{-q}<\infty .  \label{eq:eq705}
\end{equation}%
Also, for every fixed $\phi \in \Phi $, $\bar{\eta}_{\varepsilon }$ solves 
\begin{equation}
\bar{\eta}_{\varepsilon }(t)[\phi ]=\int_{0}^{t}\tilde{b}_{\varepsilon }(s,%
\bar{\eta}_{\varepsilon }(s))[\phi ]ds,  \label{eq:eq707}
\end{equation}%
where $\tilde{b}_{\varepsilon }$ is defined by the right side of %
\eqref{eq:eq630} by replacing $\tilde{\eta}$ with $\tilde{\eta}_{\varepsilon
}$.

Next, let $\bar{\eta}\in C([0,T]:\Phi _{-q_{1}})$ be the unique solution of 
\begin{equation*}
\bar{\eta}(t)[\phi ]=\int_{0}^{t}\tilde{b}(s,\bar{\eta}(s))[\phi ]ds,\;\phi
\in \Phi ,
\end{equation*}%
where $\tilde{b}$ is as in \eqref{eq:eq630}. Let $\hat{A}_{v}=A_{v}+\tilde{A}%
_{v}$ and $a_{\varepsilon }(s)=\hat{A}_{X^{0}(s)}(\bar{\eta}_{\varepsilon
}(s)+\tilde{\eta}_{\varepsilon }(s))-\hat{A}_{X^{0}(s)}(\bar{\eta}(s)+\tilde{%
\eta}(s))$. Using the same bounds as those used in (\ref{eq:eq320O5}), %
\eqref{eq:5.8a} and \eqref{eq:eq325}, there is $K<\infty $ such that%
\begin{align*}
\Vert \bar{\eta}_{\varepsilon }(t)-\bar{\eta}(t)\Vert _{-q_{1}}^{2}&
=2\int_{0}^{t}\langle \tilde{b}_{\varepsilon }(s,\bar{\eta}_{\varepsilon
}(s))-\tilde{b}(s,\bar{\eta}(s)),\bar{\eta}_{\varepsilon }(s)-\bar{\eta}%
(s)\rangle _{-q_{1}}ds \\
& =2\int_{0}^{t}\langle a_{\varepsilon }(s),\bar{\eta}_{\varepsilon }(s)-%
\bar{\eta}(s)\rangle _{-q_{1}}ds \\
& =2\int_{0}^{t}\langle a_{\varepsilon }(s),(\bar{\eta}_{\varepsilon }(s)+%
\tilde{\eta}_{\varepsilon }(s))-(\bar{\eta}(s)+\tilde{\eta}(s))\rangle
_{-q_{1}}ds+2\int_{0}^{t}\langle a_{\varepsilon }(s),\tilde{\eta}(s)-\tilde{%
\eta}_{\varepsilon }(s)\rangle _{-q_{1}}ds \\
& \leq K\int_{0}^{t}\Vert (\bar{\eta}_{\varepsilon }(s)+\tilde{\eta}%
_{\varepsilon }(s)-(\bar{\eta}(s)+\tilde{\eta}(s))\Vert
_{-q_{1}}^{2}ds+K_{2}\int_{0}^{t}\Vert \tilde{\eta}(s)-\tilde{\eta}%
_{\varepsilon }(s)\Vert _{-q_{1}}ds,
\end{align*}%
where $K_{2}=2(K_{1}M_{DG}^{\ast }+M_{A})(1+\sqrt{T}m_{T}^{1}+M_{\bar{\eta}%
}) $ and $M_{\bar{\eta}}$ is from (\ref{eq:eq705}). Thus 
\begin{equation*}
\Vert \bar{\eta}_{\varepsilon }(t)-\bar{\eta}(t)\Vert _{-q_{1}}^{2}\leq
2K\int_{0}^{t}\Vert \bar{\eta}_{\varepsilon }(s)-\bar{\eta}(s)\Vert
_{-q_{1}}^{2}ds+K_{3}\int_{0}^{t}(\Vert \tilde{\eta}_{\varepsilon }(s)-%
\tilde{\eta}(s)\Vert _{-q_{1}}^{2}+\Vert \tilde{\eta}_{\varepsilon }(s)-%
\tilde{\eta}(s)\Vert _{-q_{1}})ds,
\end{equation*}%
where $K_{3}=K_{2}+2K$. The result now follows on combining this with %
\eqref{eq:eq721} and $q<q_{1}$, and using Gronwall's lemma.
\end{proof}

We now consider part (b) of Condition \ref{gencond}. For $\varphi \in 
\mathcal{U}_{+,\varepsilon }^{M}$ let $\bar{X}^{\varepsilon ,\varphi }=\bar{%
\mathcal{G}}^{\varepsilon }(\varepsilon N^{\varepsilon ^{-1}\varphi })$. As
in Section \ref{sect:pfs_finite_dim} it follows by an application of
Girsanov's theorem that $\bar{X}^{\varepsilon ,\varphi }$ is the unique
solution of the integral equation 
\begin{equation}
\bar{X}_{t}^{\varepsilon ,\varphi }[\phi ]=x_{0}[\phi ]+\int_{0}^{t}b(\bar{X}%
_{s}^{\varepsilon ,\varphi })[\phi ]ds+\varepsilon \int_{\mathbb{X}\times
\lbrack 0,t]}G(\bar{X}_{s-}^{\varepsilon ,\varphi },y)[\phi ]N^{\varepsilon
^{-1}\varphi }(dy,ds),\,\phi \in \Phi  \label{eq:sdeg2cont}
\end{equation}%
Define $\bar{Y}^{\varepsilon ,\varphi }$ as in (\ref{eq:Y_X_relation}). Then 
$\bar{Y}^{\varepsilon ,\varphi }=\mathcal{G}^{\varepsilon }(\varepsilon
N^{\varepsilon ^{-1}\varphi })$.

The following moment bounds on $\bar{X}^{\varepsilon ,\varphi }$ and $\bar{Y}%
^{\varepsilon ,\varphi }$ will be key. The proof of part (a) is given in
Proposition 3.13 of \cite{BCD}. However, equation (3.33) in \cite{BCD}
contains an error, in view of which we give a corrected proof below. The
idea is to first approximate $\bar{X}^{\varepsilon ,\varphi }$ by a sequence
of finite-dimensional processes $\{\bar{X}^{\varepsilon ,d,\varphi }\}_{d\in 
\mathbb{N}}$ and obtain an analogous equation for the $d$-dimensional
process for every value of $d$. The desired estimate follows by first
obtaining an estimate for the finite dimensional processes that is uniform
in $d$ and then sending $d\rightarrow \infty $.

\begin{lemma}
\label{bdY_poi_inf} Suppose Condition \ref{bGcond_nucl} and \ref%
{bGcond_nuclNew}(d) hold. Fix $M<\infty $. Then there exists an $\varepsilon
_{0}>0$ such that

\begin{enumerate}
\item[(a)] \label{bdX_semi} 
\begin{equation*}
\sup_{\varepsilon \in (0,\varepsilon _{0})}\sup_{\varphi \in \mathcal{U}%
_{+,\varepsilon }^{M}}\mathbb{\bar{E}}\left[ \sup_{0\leq s\leq T}\Vert \bar{X%
}^{\varepsilon ,\varphi }(s)\Vert _{-p}^{2}\right] <\infty .
\end{equation*}

\item[(b)] \label{bdY_semi} 
\begin{equation*}
\sup_{\varepsilon \in (0,\varepsilon _{0})}\sup_{\varphi \in \mathcal{U}%
_{+,\varepsilon }^{M}}\mathbb{\bar{E}}\left[ \sup_{0\leq s\leq T}\Vert \bar{Y%
}^{\varepsilon ,\varphi }(s)\Vert _{-q}^{2}\right] <\infty .
\end{equation*}
\end{enumerate}
\end{lemma}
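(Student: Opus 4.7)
Proof plan: The two parts require rather different treatments because (a) is an estimate in the stronger norm $\Vert\cdot\Vert_{-p}$ while (b) only needs the weaker norm $\Vert\cdot\Vert_{-q}$, and this dictates which ingredients of Condition \ref{bGcond_nucl} are used.

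For part (a) I would follow the finite-dimensional approximation outlined by the authors. Define the projection $P_d v=\sum_{k=1}^d\langle v,\phi_k^{-p}\rangle_{-p}\phi_k^{-p}$ on $\Phi_{-p}$ and let $\bar X^{\varepsilon,d,\varphi}$ solve the SDE obtained by replacing $b$ and $G$ with their compositions with $P_d$ in \eqref{eq:sdeg2cont}; this is a genuinely finite-dimensional SDE with jumps and exists by standard theory. Apply It\^o's formula to the finite sum
\begin{equation*}
\Vert\bar X^{\varepsilon,d,\varphi}_t\Vert_{-p}^2=\sum_{k=1}^d\langle\bar X^{\varepsilon,d,\varphi}_t,\phi_k^{-p}\rangle_{-p}^2.
\end{equation*}
The drift contribution assembles to $2\int_0^t b(\bar X^{\varepsilon,d,\varphi}_s)[\theta_p P_d \bar X^{\varepsilon,d,\varphi}_s]\,ds$, which by Condition \ref{bGcond_nucl}(c) is bounded by $C_b\int_0^t(1+\Vert\bar X^{\varepsilon,d,\varphi}_s\Vert_{-p}^2)\,ds$. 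For the jump part, split into the $\bar{\mathbb{P}}$-martingale part (compensated by $\varepsilon^{-1}\varphi\,\nu_T$) and its predictable compensator. The compensator of the square bracket is $\varepsilon\int_0^t\int_{\mathbb{X}}\Vert P_d G(\bar X^{\varepsilon,d,\varphi}_{s-},y)\Vert_{-p}^2\varphi(y,s)\,\nu(dy)ds$, which by Condition \ref{bGcond_nucl}(b) is controlled by $2\varepsilon(1+\sup_s\Vert\bar X^{\varepsilon,d,\varphi}_s\Vert_{-p}^2)\int_{\mathbb{X}\times[0,T]}M_G^2\varphi\,d\nu_T$. The key point is that Lemma \ref{Ineq1} (applied with $h=M_G\in\mathcal{H}\cap L^2(\nu)$ by Condition \ref{bGcond_nuclNew}(d)) bounds the last integral by $\varsigma(a^2(\varepsilon)+T)$ independently of $\varphi\in\mathcal{U}^M_{+,\varepsilon}$, so the coefficient of $\sup_s\Vert\bar X^{\varepsilon,d,\varphi}_s\Vert_{-p}^2$ is of order $\varepsilon$. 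Applying Doob's inequality to the martingale, combining with the drift estimate, and taking $\varepsilon_0$ small enough that the $\varepsilon$-coefficient of $\mathbb{\bar E}\sup_{s\leq t}\Vert\bar X^{\varepsilon,d,\varphi}_s\Vert_{-p}^2$ can be absorbed on the left, a Gronwall argument yields a bound that is uniform in $d,\varphi$, and $\varepsilon\leq\varepsilon_0$. Convergence of $\bar X^{\varepsilon,d,\varphi}$ to $\bar X^{\varepsilon,\varphi}$ as $d\to\infty$ (in a suitable sense) and Fatou's lemma complete part (a).

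For part (b), decompose $\bar Y^{\varepsilon,\varphi}$ exactly as in the proof of Lemma \ref{bdY_poi}, interpreting each of $A^{\varepsilon,\varphi},M^{\varepsilon,\varphi},B^{\varepsilon,\varphi},\mathcal{E}^{\varepsilon,\varphi}_1,C^{\varepsilon,\varphi}$ as $\Phi'$-valued processes defined through their action on $\phi\in\Phi$. Again pass to a finite-dimensional projection (now in $\Phi_{-q}$) and apply It\^o's formula to $\Vert\bar Y^{\varepsilon,\varphi}_t\Vert_{-q}^2$. The $b$-drift term is handled by the one-sided Lipschitz estimate in Condition \ref{bGcond_nucl}(d), which gives $\langle\bar Y^{\varepsilon,\varphi}_s,(b(\bar X^{\varepsilon,\varphi}_s)-b(X^0_s))/a(\varepsilon)\rangle_{-q}\leq L_b\Vert\bar Y^{\varepsilon,\varphi}_s\Vert_{-q}^2$. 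The compensated part of $B^{\varepsilon,\varphi}$ and the error term $\mathcal{E}^{\varepsilon,\varphi}_1$ are estimated using the Lipschitz property in Condition \ref{bGcond_nucl}(e), the Cauchy--Schwarz inequality, and Lemma \ref{Ineq2} with $h=L_G$, producing a term of the form $\gamma_2 a^2(\varepsilon)\sup_s\Vert\bar Y^{\varepsilon,\varphi}_s\Vert_{-q}^2$ that can be absorbed for small $\varepsilon$. For the martingale $M^{\varepsilon,\varphi}$, Doob's inequality plus the compensator computation bound its supremum squared by a constant multiple of $\varepsilon a(\varepsilon)^{-2}\,\mathbb{\bar E}\bigl[(1+\sup_s\Vert\bar X^{\varepsilon,\varphi}_s\Vert_{-p}^2)\int M_G^2\varphi\,d\nu_T\bigr]$, which is $O(\varepsilon b(\varepsilon)^{-1})=O(1)$ by Lemma \ref{Ineq1} and part (a). Finally $C^{\varepsilon,\varphi}$ is bounded uniformly using Lemma \ref{Ineq2} with $h=M_G$ and the $\Vert X^0\Vert_{-p}$ estimate in \eqref{eq:eq336}. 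Collecting the bounds and applying Gronwall gives the conclusion.

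The principal obstacle is part (a): the controlled Poisson measure has intensity $\varepsilon^{-1}\varphi$, so the $\varepsilon^2$ factor in the squared jump size cancels to leave only a single $\varepsilon$, and without the bound from Lemma \ref{Ineq1} (whose proof crucially uses the $\mathcal{H}$ condition on $M_G$ to convert the $L_T(\varphi)\leq Ma^2(\varepsilon)$ cost bound into integrability of $M_G^2\varphi$) one cannot close the Gronwall argument uniformly in $\varphi\in\mathcal{U}^M_{+,\varepsilon}$. The finite-dimensional reduction is needed because the coercivity hypothesis Condition \ref{bGcond_nucl}(c) is formulated only on the dense set $\Phi$ and is invoked through It\^o's formula applied to the $\Phi_{-p}$-norm, which is rigorous only after finite projection.
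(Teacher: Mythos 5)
Your part (a) is essentially the paper's argument: finite\-/dimensional projection, It\^{o}'s formula for $\Vert \bar{X}^{\varepsilon,d,\varphi}\Vert_{-p}^{2}$, Condition \ref{bGcond_nucl}(c) for the drift, and Lemma \ref{Ineq1} with $h=M_G$ to make the jump compensator $O(\varepsilon)$ uniformly over $\varphi\in\mathcal{U}^{M}_{+,\varepsilon}$, followed by Gronwall, passage to the limit in $d$, and Fatou. That part is fine.

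Part (b) contains a genuine gap in the treatment of the martingale. You simultaneously invoke the additive decomposition of Lemma \ref{bdY_poi} (whose martingale piece is the $\Phi_{-q}$-valued stochastic integral $M^{\varepsilon,\varphi}=\frac{\varepsilon}{a(\varepsilon)}\int G\,d\tilde{N}^{\varepsilon^{-1}\varphi}$, to which Doob's $L^{2}$ inequality applies cleanly and yields the bound you state) and It\^{o}'s formula for $\Vert \bar{Y}^{\varepsilon,\varphi}\Vert_{-q}^{2}$ (which is forced on you, as you correctly note, because Condition \ref{bGcond_nucl}(d) is only a one\-/sided Lipschitz condition in the inner product, so the drift increment cannot be bounded in norm). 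These two are incompatible: once you expand $\Vert\bar{X}^{\varepsilon,\varphi}-X^{0}\Vert_{-q}^{2}$ by It\^{o}'s formula, the martingale contribution is not $M^{\varepsilon,\varphi}$ but the compensated sum of $2\langle \bar{X}^{\varepsilon,\varphi}(s-)-X^{0}(s),\varepsilon G\rangle_{-q}+\Vert\varepsilon G\Vert_{-q}^{2}$. For the cross term, Doob's inequality gives a bound involving $\mathbb{\bar{E}}\bigl[\sup_{s}\Vert\bar{Y}^{\varepsilon,\varphi}(s)\Vert_{-q}^{2}\,(1+\sup_{s}\Vert\bar{X}^{\varepsilon,\varphi}(s)\Vert_{-p}^{2})\bigr]$, a product of two uncontrolled random variables that cannot be closed using only the second\-/moment bound from part (a). The paper resolves this with the Lenglart--L\'{e}pingle--Pratelli ($L^{1}$, BDG\-/type) maximal inequality, $\mathbb{\bar{E}}\sup_{t}|M_{1}^{\varepsilon,\varphi}(t)|\leq L_{5}\,\mathbb{\bar{E}}\bigl[\langle M_{1}^{\varepsilon,\varphi}\rangle_{T}^{1/2}\bigr]$, which lets one pull out $\sup_{s}\Vert\bar{Y}^{\varepsilon,\varphi}(s)\Vert_{-q}$ linearly, apply $ab\leq\frac12(a^{2}+b^{2})$, and absorb the resulting $\frac{L_{5}\sqrt{\varepsilon}}{2a(\varepsilon)}\mathbb{\bar{E}}\sup_{s}\Vert\bar{Y}^{\varepsilon,\varphi}(s)\Vert_{-q}^{2}$ into the left side since $\sqrt{\varepsilon}/a(\varepsilon)=\sqrt{b(\varepsilon)}\to0$. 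Without this (or an equivalent $L^{1}$ maximal inequality) your Gronwall argument does not close. The remaining ingredients you list for part (b) --- the one\-/sided Lipschitz bound for the drift, Lemmas \ref{Ineq1} and \ref{Ineq2} for the terms involving $\varphi$ and $\psi$, and absorption of the small coefficients before Gronwall --- do match the paper.
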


\begin{proof}
We first prove part \eqref{bdX_semi}. We follow the steps in the proof of
Theorem 6.2.2 of \cite{KaXi95} (see also the proof of Theorem 3.7 in \cite%
{BCD}). Recall that $\{\phi _{j}\}_{j\in \mathbb{N}}$ is a CONS in $\Phi
_{0} $ and a COS in $\Phi _{n},n\in \mathbb{N}$. For $d\in \mathbb{N}$ let $%
\pi _{d}:\Phi _{-p}\rightarrow \mathbb{R}^{d}$ be defined by 
\begin{equation*}
\pi _{d}(u)\doteq (u[\phi _{1}^{p}],\hdots,u[\phi _{d}^{p}])^{\prime
},\;u\in \Phi _{-p}.
\end{equation*}%
Let $x_{0}^{d}=\pi _{d}(x_{0})$. Define $\beta ^{d}:\mathbb{R}%
^{d}\rightarrow \mathbb{R}^{d}$ and $g^{d}:\mathbb{R}^{d}\times \mathbb{X}%
\rightarrow \mathbb{R}^{d}$ by 
\begin{equation*}
\beta ^{d}(x)_{k}=b\left( \sum\nolimits_{j=1}^{d}x_{j}\phi _{j}^{-p}\right)
[\phi _{k}^{p}],\;g^{d}(x,y)_{k}=G\left( \sum\nolimits_{j=1}^{d}x_{j}\phi
_{j}^{-p},y\right) [\phi _{k}^{p}],k=1,\ldots ,d,
\end{equation*}%
where $\eta \lbrack \phi ]$ was defined in (\ref{eqn:defsqbrkt}). Next
define $\gamma ^{d}:\Phi ^{\prime }\rightarrow \Phi ^{\prime }$ by 
\begin{equation*}
\gamma ^{d}u\doteq \sum_{k=1}^{d}u[\phi _{k}^{p}]\phi _{k}^{-p},
\end{equation*}%
and define $b^{d}:\Phi ^{\prime }\rightarrow \Phi ^{\prime }$ and $%
G^{d}:\Phi ^{\prime }\times \mathbb{X}\rightarrow \Phi ^{\prime }$ by 
\begin{equation*}
b^{d}(u)=\gamma ^{d}b(\gamma ^{d}u),\quad G^{d}(u,y)=\gamma ^{d}G(\gamma
^{d}u,y).
\end{equation*}%
It is easy to check that for each $d\in \mathbb{N}$, $b^{d}$ and $G^{d}$
satisfy Condition \ref{bGcond_nucl} [with $%
(M_{b^{d}},M_{G^{d}},C_{b^{d}},L_{b^{d}},L_{G^{d}})$ equal to $%
(M_{b},M_{G},C_{b},L_{b},L_{G})$ for all $d$]; see the proof of Theorem
6.2.2 in \cite{KaXi95}. Also, from Lemma 6.2.2 in \cite{KaXi95} and an
argument based on Girsanov's theorem (as in Section \ref{sect:pfs_finite_dim}%
) it follows that the following integral equation has a unique solution in $%
D([0,T]:\mathbb{R}^{d})$ for all $\varphi \in \mathcal{U}_{+,\varepsilon
}^{M}$: 
\begin{equation*}
\bar{x}^{\varepsilon ,d,\varphi }(s)=x_{0}^{d}+\int_{0}^{t}\beta ^{d}(\bar{x}%
^{\varepsilon ,d,\varphi }(s))ds+\int_{\mathbb{X}\times \lbrack
0,t]}\varepsilon g^{d}(\bar{x}^{\varepsilon ,d,\varphi
}(s-),y)N^{\varepsilon ^{-1}\varphi }(dy,ds).
\end{equation*}%
Let 
\begin{equation*}
\bar{X}^{\varepsilon ,d,\varphi }(t)=\sum_{k=1}^{d}\bar{x}_{k}^{\varepsilon
,d,\varphi }(t)\phi _{k}^{-p},\quad t\in \lbrack 0,T].
\end{equation*}%
Then, with $X_{0}^{d}=\sum_{k=1}^{d}(x_{0}^{d})_{k}\phi _{k}^{-p}$, for all $%
t\in \lbrack 0,T]$ 
\begin{equation*}
\bar{X}^{\varepsilon ,d,\varphi }(t)=X_{0}^{d}+\int_{0}^{t}b^{d}(\bar{X}%
^{\varepsilon ,d,\varphi }(s))ds+\int_{\mathbb{X}\times \lbrack
0,t]}\varepsilon G^{d}(\bar{X}^{\varepsilon ,d,\varphi
}(s-),y)N^{\varepsilon ^{-1}\varphi }(dy,ds).
\end{equation*}

We next prove that there exists $\varepsilon_{0}>0$ such that 
\begin{equation*}
\sup_{d\in\mathbb{N}}\sup_{\varepsilon\in(0,\varepsilon_{0})}\mathbb{\bar {E}%
}\left[ \sup_{0\leq t\leq T}\Vert\bar{X}^{\varepsilon,d,\varphi}(s)\Vert
_{-p}^{2}\right] <\infty.
\end{equation*}
The proof is similar to Lemma 6.2.2 in \cite{KaXi95} (see also the proof of 
\cite[Proposition 3.13]{BCD}), and therefore we just outline the main steps.
By It\^{o}'s lemma 
\begin{align}
& \Vert\bar{X}^{\varepsilon,d,\varphi}(t)\Vert_{-p}^{2}  \notag \\
& =\Vert X^{0}(t)\Vert_{- p}^{2}+2\int_{0}^{t}b^{d}(\bar{X}%
^{\varepsilon,d,\varphi }(s))[\theta_{p}\bar{X}^{\varepsilon,d,\varphi}(s)]ds
\notag \\
& \hspace{0.3cm}+2\int_{\mathbb{X}\times\lbrack0,t]}\left\langle \bar {X}%
^{\varepsilon,d,\varphi}(s),G^{d}(\bar{X}^{\varepsilon,d,\varphi
}(s),y)\right\rangle _{-p}\varphi d\nu_{T}+\int_{\mathbb{X}\times\lbrack
0,t]}\varepsilon\Vert G^{d}(\bar{X}^{\varepsilon,d,\varphi}(s),y)\Vert
_{-p}^{2}\varphi d\nu_{T}  \notag \\
& \hspace{0.3cm}+\int_{\mathbb{X}\times\lbrack0,t]}2\left[ \left\langle \bar{%
X}^{\varepsilon,d,\varphi}(s-),\varepsilon G^{d}(\bar{X}^{\varepsilon
,d,\varphi}(s-),y)\right\rangle _{-p}+\Vert\varepsilon G^{d}(\bar {X}%
^{\varepsilon,d,\varphi}(s-),y)\Vert_{-p}^{2}\right] d\tilde {N}%
^{\varepsilon^{-1}\varphi}.  \label{eq:eq133}
\end{align}
Recalling that Condition \ref{bGcond_nucl}(c) holds with $b=b^{d}$ (with the
same constant $C_{b}$ for all $d$), we have 
\begin{equation*}
2\int_{0}^{t}\left\langle \bar{X}^{\varepsilon,d,\varphi}(s),b^{d}(\bar {X}%
^{\varepsilon,d,\varphi}(s))\right\rangle _{-p}ds\leq
C_{b}\int_{0}^{t}(1+\Vert\bar{X}^{\varepsilon,d,\varphi}(s)\Vert_{-p}^{2})ds.
\end{equation*}
Now exactly as in \cite[Proposition 3.13]{BCD} it follows that there exists $%
L_{1}\in(0,\infty)$ such that for all $d\in\mathbb{N}$, $\varepsilon\in(0,1)$
and $\varphi\in\mathcal{U}_{+,\varepsilon}^{M}$ 
\begin{equation}
\sup_{0\leq s\leq T}\Vert\bar{X}^{\varepsilon,d,\varphi}(s)\Vert_{-
p}^{2}\leq L_{1}\left( 1+\sup_{0\leq s\leq T}|M^{d}(s)|\right) ,
\label{eq:eq139}
\end{equation}
where $M^{d}(t)$ is the last term on the right side of \eqref{eq:eq133} (see
(3.35) in \cite{BCD}). Once more, exactly as in \cite{BCD} (see (3.36) and
(3.37) therein) one has that there is a $L_{2}\in(0,\infty)$ such that for
all $d\in\mathbb{N}$, $\varepsilon\in(0,1)$ and $\varphi\in\mathcal{U}%
_{+,\varepsilon}^{M}$ 
\begin{equation*}
\mathbb{\bar{E}}\sup_{0\leq s\leq T}|M^{d}(s)|\leq\varepsilon L_{2}\left( 1+%
\mathbb{\bar{E}}\sup_{0\leq t\leq T}\Vert\bar{X}^{\varepsilon,d,\varphi
}(t)\Vert_{-p}^{2}\right) +\frac{1}{8}\mathbb{\bar{E}}\sup_{0\leq t\leq
T}\Vert\bar{X}^{\varepsilon,d,\varphi}(t))\Vert_{-p}^{2}.
\end{equation*}
Using the last estimate in \eqref{eq:eq139} we now have that, for some $%
\varepsilon_{0}>0$, 
\begin{equation}
\sup_{d\in\mathbb{N}}\sup_{\varepsilon\in(0,\varepsilon_{0})}\sup_{\varphi
\in\mathcal{U}_{+,\varepsilon}^{M}}\mathbb{\bar{E}}\sup_{0\leq t\leq T}\Vert%
\bar{X}^{\varepsilon,d,\varphi}(t)\Vert_{- p}^{2}<\infty.  \label{eq:eq205}
\end{equation}
Also an application of Girsanov's theorem and Theorem 6.1.2 of \cite{KaXi95}
shows that $\bar{X}^{\varepsilon,d,\varphi}$ converges in distribution, in $%
D([0,T]:\Phi_{-q})$ to the solution of \eqref{eq:sdeg2cont}. The estimate in
part \eqref{bdX_semi} of the lemma now follows from \eqref{eq:eq205} and an
application of Fatou's lemma.

We now prove part \eqref{bdY_semi} of the lemma. By It\^{o}'s formula,

\begin{align*}
\Vert \bar{X}^{\varepsilon ,\varphi }(t)-X^{0}(t)\Vert _{-q}^{2}&
=2\int_{0}^{t}\langle \bar{X}^{\varepsilon ,\varphi }(s)-X^{0}(s),b(\bar{X}%
^{\varepsilon ,\varphi }(s))-b(X^{0}(s))\rangle _{-q}ds \\
& \mbox{}+2\int_{\mathbb{X}\times \lbrack 0,t]}\langle \bar{X}^{\varepsilon
,\varphi }(s)-X^{0}(s),G(\bar{X}^{\varepsilon ,\varphi
}(s),y)-G(X^{0}(s),y))\rangle _{-q}\nu (dy)ds \\
& \mbox{}+2\int_{\mathbb{X}\times \lbrack 0,t]}\langle \bar{X}^{\varepsilon
,\varphi }(s)-X^{0}(s),G(\bar{X}^{\varepsilon ,\varphi }(s),y)\rangle
_{-q}(\varphi -1)\nu (dy)ds \\
& \mbox{}+\int_{\mathbb{X}\times \lbrack 0,t]}\varepsilon \Vert G(\bar{X}%
^{\varepsilon ,\varphi }(s),y)\Vert _{-q}^{2}\varphi \nu (dy)ds \\
& \mbox{}+\int_{\mathbb{X}\times \lbrack 0,t]}\Big(2\langle \bar{X}%
^{\varepsilon ,\varphi }(s-)-X^{0}(s-),\varepsilon G(\bar{X}^{\varepsilon
,\varphi }(s-),y)\rangle _{-q} \\
& \mbox{}\hspace{2cm}+\Vert \varepsilon G(\bar{X}^{\varepsilon ,\varphi
}(s-),y)\Vert _{-q}^{2}\Big)\ \tilde{N}^{\varepsilon ^{-1}\varphi }(dy,ds) \\
& =a^{2}(\varepsilon )\left( A^{\varepsilon ,\varphi }+B^{\varepsilon
,\varphi }+C^{\varepsilon ,\varphi }+\mathcal{E}_{1}^{\varepsilon ,\varphi
}+M_{1}^{\varepsilon ,\varphi }+M_{2}^{\varepsilon ,\varphi }\right) .
\end{align*}%
By Condition \ref{bGcond_nucl}(d), for all $t\in \lbrack 0,T]$ 
\begin{equation*}
\sup_{0\leq r\leq t}A^{\varepsilon ,\varphi }(r)\leq 2L_{b}\int_{0}^{t}\Vert 
\bar{Y}^{\varepsilon ,\varphi }(s)\Vert _{-q}^{2}ds.
\end{equation*}%
Also by Condition \ref{bGcond_nucl}(e) 
\begin{equation*}
\sup_{0\leq r\leq t}|B^{\varepsilon ,\varphi }(r)|\leq \Vert L_{G}\Vert
_{1}\int_{0}^{t}\Vert \bar{Y}^{\varepsilon ,\varphi }(s)\Vert _{-q}^{2}ds.
\end{equation*}%
Next, note that with $\psi =(\varphi -1)/a(\varepsilon )$ 
\begin{align*}
\sup_{0\leq r\leq t}|C^{\varepsilon ,\varphi }(r)|& \leq 2\int_{\mathbb{X}%
\times \lbrack 0,t]}|\langle \bar{Y}^{\varepsilon ,\varphi }(s),G(\bar{X}%
^{\varepsilon ,\varphi }(s),y)\rangle _{-q}|\,|\psi |d\nu _{T} \\
& \leq 2\int_{\mathbb{X}\times \lbrack 0,t]}\Vert \bar{Y}^{\varepsilon
,\varphi }(s)\Vert _{-q}\Vert G(\bar{X}^{\varepsilon ,\varphi }(s),y)\Vert
_{-q}|\psi |d\nu _{T} \\
& \leq 2\int_{\mathbb{X}\times \lbrack 0,t]}\Vert \bar{Y}^{\varepsilon
,\varphi }(s)\Vert _{-q}\left[ \Vert G(\bar{X}^{0}(s),y)\Vert
_{-q}+L_{G}(y)a(\varepsilon )\Vert \bar{Y}^{\varepsilon ,\varphi }(s)\Vert
_{-q}\right] |\psi |d\nu _{T} \\
& \leq 2\int_{\mathbb{X}\times \lbrack 0,t]}\Vert \bar{Y}^{\varepsilon
,\varphi }(s)\Vert _{-q}R_{G}(y)|\psi |\left( 1+a(\varepsilon )\Vert \bar{Y}%
^{\varepsilon ,\varphi }(s)\Vert _{-q}\right) d\nu _{T},
\end{align*}%
where for $y\in \mathbb{X}$, $R_{G}(y)=M_{G}(y)(1+m_{T})+L_{G}(y)$. Thus 
\begin{align}
\sup_{r\leq t}|C^{\varepsilon ,\varphi }(r)|& \leq 2a(\varepsilon
)\sup_{r\leq t}\Vert \bar{Y}^{\varepsilon ,\varphi }(r)\Vert _{-q}^{2}\int_{%
\mathbb{X}\times \lbrack 0,t]}R_{G}(y)|\psi |d\nu _{T}+2\int_{\mathbb{X}%
\times \lbrack 0,t]}\Vert \bar{Y}^{\varepsilon ,\varphi }(s)\Vert
_{-q}R_{G}(y)|\psi |d\nu _{T}  \notag \\
& =T_{1}+T_{2}.  \label{eq:eq311}
\end{align}%
Consider now $T_{2}$. Note that $R_{G}\in L^{2}(\nu )\cap \mathcal{H}$. We
can therefore apply Lemma \ref{Ineq2} with $h$ replaced by $R_{G}$. For any $%
\beta <\infty $ 
\begin{align}
T_{2}& =2\int_{\mathbb{X}\times \lbrack 0,t]}\Vert \bar{Y}^{\varepsilon
,\varphi }(s)\Vert _{-q}R_{G}(y)|\psi |\left[ 1_{\{|\psi |\leq \beta
/a(\varepsilon )\}}+1_{\{|\psi |>\beta /a(\varepsilon )\}}\right] d\nu _{T} 
\notag \\
& \leq 2\sup_{r\leq t}\Vert \bar{Y}^{\varepsilon ,\varphi }(r)\Vert
_{-q}\vartheta (\beta )(1+\sqrt{T})+\int_{\mathbb{X}\times \lbrack 0,t]}%
\left[ \Vert \bar{Y}^{\varepsilon ,\varphi }(r)\Vert
_{-q}^{2}R_{G}^{2}(y)+|\psi |^{2}1_{\{|\psi |\leq \beta /a(\varepsilon )\}}%
\right] d\nu _{T}  \notag \\
& \leq 2\sup_{r\leq t}\Vert \bar{Y}^{\varepsilon ,\varphi }(r)\Vert
_{-q}\vartheta (\beta )(1+\sqrt{T})+L_{1}\int_{[0,t]}\Vert \bar{Y}%
^{\varepsilon ,\varphi }(s)\Vert _{-q}^{2}ds+M\kappa _{2}(\beta ),
\label{eq:eq321}
\end{align}%
where $L_{1}=\int_{\mathbb{X}}R_{G}^{2}(y)\nu (dy)$ and in the last
inequality we have used Lemma \ref{sineqs2}(c). Once again from Lemma \ref%
{Ineq2} 
\begin{equation*}
L_{2}=\sup_{\varepsilon \in (0,1)}\sup_{\psi \in \mathcal{S}_{\varepsilon
}^{M}}2\int_{\mathbb{X}\times \lbrack 0,T]}R_{G}(y)|\psi |d\nu _{T}<\infty .
\end{equation*}%
Using $a\leq 1+a^{2}$ and the last two estimates in \eqref{eq:eq311}, we
have that 
\begin{equation}
\sup_{r\leq t}|C^{\varepsilon ,\varphi }(r)|\leq L(\beta )+\sup_{r\leq
t}\Vert \bar{Y}^{\varepsilon ,\varphi }(r)\Vert _{-q}^{2}(L_{2}a(\varepsilon
)+\tilde{\vartheta}(\beta ))+L_{1}\int_{[0,t]}\Vert \bar{Y}^{\varepsilon
,\varphi }(s)\Vert _{-q}^{2}ds,  \label{eq:eq329}
\end{equation}%
where $\tilde{\vartheta}(\beta )=2\vartheta (\beta )(1+\sqrt{T})$ and $%
L(\beta )=M\kappa _{2}(\beta )+\tilde{\vartheta}(\beta )$.

Next note that 
\begin{align*}
\sup_{r\leq t}\mathcal{E}_{1}^{\varepsilon ,\varphi }(r)& \leq \frac{%
\varepsilon }{a^{2}(\varepsilon )}\int_{\mathbb{X}\times \lbrack
0,t]}(1+\Vert \bar{X}^{\varepsilon ,\varphi }(s)\Vert
_{-p})^{2}M_{G}^{2}(y)\varphi (y,s)d\nu _{T} \\
& \leq \frac{\varepsilon }{a^{2}(\varepsilon )}\left( 1+\sup_{s\leq T}\Vert 
\bar{X}^{\varepsilon ,\varphi }(s)\Vert _{-p}\right) ^{2}\int_{\mathbb{X}%
\times \lbrack 0,t]}M_{G}^{2}(y)\,\varphi (y,s)d\nu _{T}.
\end{align*}%
Since $M_{G}\in L^{2}(\nu )\cap \mathcal{H}$, we have from Lemma \ref{Ineq1}
that 
\begin{equation*}
L_{3}=\sup_{\varepsilon \in (0,1)}\sup_{\varphi \in \mathcal{S}%
_{+,\varepsilon }^{M}}\int_{\mathbb{X}\times \lbrack 0,t]}M_{G}^{2}\,\varphi
d\nu _{T}<\infty ,
\end{equation*}%
and consequently for all $\varepsilon \in (0,\varepsilon _{0})$ 
\begin{equation*}
\mathbb{\bar{E}}\left[ \sup_{r\leq t}\mathcal{E}_{1}^{\varepsilon ,\varphi
}(r)\right] \leq L_{4}\frac{\varepsilon }{a^{2}(\varepsilon )},
\end{equation*}%
where $L_{4}=2L_{3}\sup_{\varphi \in \mathcal{U}_{+,\varepsilon }^{M}}(1+%
\mathbb{\bar{E}}\sup_{0\leq s\leq T}\Vert \bar{X}^{\varepsilon ,\varphi
}(s)\Vert _{-p}^{2})<\infty $ by part (a) of the lemma if $\varepsilon
_{0}>0 $ is small enough. Next, an application of Lenglart-Lepingle-Pratelli
inequality (see Lemma 2.4 in \cite{kurpro}) gives that for some $L_{5}\in
(0,\infty )$ 
\begin{align*}
\mathbb{\bar{E}}\left[ \sup_{0\leq s\leq T}M_{1}^{\varepsilon ,\varphi }(s)%
\right] & \leq \frac{L_{5}}{a^{2}(\varepsilon )}\mathbb{\bar{E}}\left[ \int_{%
\mathbb{X}\times \lbrack 0,T]}\left\langle \bar{X}^{\varepsilon ,\varphi
}(s)-X^{0}(s),\varepsilon G(\bar{X}^{\varepsilon ,\varphi
}(s),y)\right\rangle _{-q}^{2}\varepsilon ^{-1}\varphi \nu (dy)ds\right]
^{1/2} \\
& \leq \frac{L_{5}\sqrt{\varepsilon }}{a^{2}(\varepsilon )}\mathbb{\bar{E}}%
\left[ \int_{\mathbb{X}\times \lbrack 0,t]}\Vert \bar{X}^{\varepsilon
,\varphi }(s)-X^{0}(s)\Vert _{-q}^{2}\Vert G(\bar{X}^{\varepsilon ,\varphi
}(s-),y)\Vert _{-q}^{2}\varphi \nu (dy)ds\right] ^{1/2} \\
& \leq \frac{L_{5}\sqrt{\varepsilon }}{a(\varepsilon )}\mathbb{\bar{E}}\left[
\sup_{s\leq t}\Vert \bar{Y}^{\varepsilon ,\varphi }(s)\Vert _{-q}\left(
\int_{\mathbb{X}\times \lbrack 0,t]}\Vert G(\bar{X}^{\varepsilon ,\varphi
}(s),y)\Vert _{-q}^{2}\varphi \nu (dy)ds\right) ^{1/2}\right] \\
& \leq \frac{L_{5}\sqrt{\varepsilon }}{2a(\varepsilon )}\left[ \mathbb{\bar{E%
}}\sup_{s\leq t}\Vert \bar{Y}^{\varepsilon ,\varphi }(s)\Vert _{-q}^{2}+%
\mathbb{\bar{E}}\left( 1+\sup_{s\leq t}\Vert \bar{X}^{\varepsilon ,\varphi
}(s)\Vert _{-p}^{2}\right) \int_{\mathbb{X}\times \lbrack
0,t]}M_{G}(y)^{2}\varphi \nu (dy)ds\right] \\
& \leq \frac{L_{5}\sqrt{\varepsilon }}{2a(\varepsilon )}\mathbb{\bar{E}}%
\sup_{s\leq t}\Vert \bar{Y}^{\varepsilon ,\varphi }(s)\Vert _{-q}^{2}+\frac{%
L_{5}L_{4}\sqrt{\varepsilon }}{4a(\varepsilon )}.
\end{align*}%
Finally, 
\begin{align*}
\mathbb{\bar{E}}\left[ \sup_{0\leq s\leq t}M_{2}^{\varepsilon ,\varphi }(s)%
\right] & \leq \frac{1}{a^{2}(\varepsilon )}\mathbb{\bar{E}}\int_{\mathbb{X}%
\times \lbrack 0,T]}\Vert \varepsilon G(\bar{X}^{\varepsilon ,\varphi
}(s-),y)\Vert _{-q}^{2}{N}^{\varepsilon ^{-1}\varphi }(dy,ds) \\
& \mbox{}\quad +\frac{1}{a^{2}(\varepsilon )}\mathbb{\bar{E}}\int_{\mathbb{X}%
\times \lbrack 0,T]}\varepsilon \Vert G(\bar{X}^{\varepsilon ,\varphi
}(s),y)\Vert _{-q}^{2}\varphi d\nu _{T} \\
& \leq \frac{2\varepsilon }{a^{2}(\varepsilon )}\mathbb{\bar{E}}\int_{%
\mathbb{\ X}\times \lbrack 0,T]}\Vert G(\bar{X}^{\varepsilon ,\varphi
}(s),y)\Vert _{-q}^{2}\varphi d\nu _{T} \\
& \leq \frac{\varepsilon L_{4}}{a^{2}(\varepsilon )}.
\end{align*}%
Let $\varepsilon _{1}\in (0,\varepsilon _{0})$ be such that for all $%
\varepsilon \in (0,\varepsilon _{1})$, $\max \{\varepsilon ,a(\varepsilon ),%
\frac{\varepsilon }{a^{2}(\varepsilon )}\}<1$. Collecting terms together, we
now have for all $\varepsilon \in (0,\varepsilon _{1})$ 
\begin{align*}
\mathbb{\bar{E}}\left[ \sup_{s\leq t}\Vert \bar{Y}^{\varepsilon ,\varphi
}(s)\Vert _{-q}^{2}\right] & \leq K_{1}\int_{0}^{t}\mathbb{\bar{E}}\left[
\sup_{r\leq s}\Vert \bar{Y}^{\varepsilon ,\varphi }(r)\Vert _{-q}^{2}\right]
ds+\left( L(\beta )+2L_{4}+\frac{L_{5}L_{4}}{4}\right) \\
& \mbox{}+\left[ L_{2}a(\varepsilon )+L_{5}\frac{\sqrt{\varepsilon }}{%
2a(\varepsilon )}+\tilde{\vartheta}(\beta )\right] \mathbb{\bar{E}}\left[
\sup_{s\leq t}\Vert \bar{Y}^{\varepsilon ,\varphi }(s)\Vert _{-q}^{2}\right]
,
\end{align*}%
where $K_{1}=2L_{b}+\Vert L_{G}\Vert _{1}+L_{1}$. Since $\tilde{\vartheta}%
(\beta )\rightarrow 0$ as $\beta \rightarrow \infty $, we can find $\beta
_{0}<\infty $ and $\varepsilon _{2}\in (0,\varepsilon _{1})$ such that for
all $\varepsilon \in (0,\varepsilon _{2})$, $L_{2}a(\varepsilon )+L_{5}\frac{%
\varepsilon }{2a(\varepsilon )}+\tilde{\vartheta}(\beta _{0})\leq 1/2$.
Using this in the above inequality, for all $\varepsilon \in (0,\varepsilon
_{2})$ 
\begin{equation*}
\mathbb{\bar{E}}\left[ \sup_{s\leq t}\Vert \bar{Y}^{\varepsilon ,\varphi
}(s)\Vert _{-q}^{2}\right] \leq K_{2}+\frac{K_{1}}{2}\int_{0}^{t}\mathbb{%
\bar{E}}\left[ \sup_{r\leq s}\Vert \bar{Y}^{\varepsilon ,\varphi }(r)\Vert
_{-q}^{2}\right] ds,
\end{equation*}%
where $K_{2}=\frac{1}{2}(L(\beta _{0})+2L_{4}+\frac{L_{5}L_{4}}{4})$. The
result now follows from Gronwall's inequality.
\end{proof}

The following result will be used in verifying part (b) of Condition \ref%
{gencond}. Recall the integer $q_{1}>q$ introduced in Condition \ref%
{bGcond_nuclNew}.

\begin{lemma}
\label{cont_lim_poi_inf} Suppose Conditions \ref{bGcond_nucl} and \ref%
{bGcond_nuclNew} hold. Let $\varepsilon _{0}>0$ be as in Lemma \ref%
{bdY_poi_inf}, and let $\{\varphi ^{\varepsilon }\}_{\varepsilon \in
(0,\varepsilon _{0})}$ be such that for some $M<\infty $, $\varphi
^{\varepsilon }\in \mathcal{U}_{+,\varepsilon }^{M}$ for every $\varepsilon
\in (0,\varepsilon _{0})$. Let $\psi ^{\varepsilon }=(\varphi ^{\varepsilon
}-1)/a(\varepsilon )$ and fix $\beta \in (0,1]$. Then $\{\left( \bar{Y}%
^{\varepsilon ,\varphi ^{\varepsilon }},\psi ^{\varepsilon }1_{\{|\psi
^{\varepsilon }|\leq \beta /a(\varepsilon )\}}\right) \}_{\varepsilon \in
(0,\varepsilon _{0})}$ is tight in $D([0,T]:\Phi _{-q_{1}})\times
B_{2}((M\kappa _{2}(1))^{1/2}))$ and any limit point $(\eta ,\psi )$ solves %
\eqref{nueq_nucl}.
\end{lemma}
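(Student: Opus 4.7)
The strategy mirrors the finite-dimensional Lemma \ref{cont_lim_poi}, carried out weakly against test functions $\phi\in\Phi$ and with estimates adapted to the nested structure $\Phi_{-p}\hookrightarrow\Phi_{-q}\hookrightarrow\Phi_{-q_1}$. First, I would subtract \eqref{eq:sdeg4} from \eqref{eq:sdeg2cont}, divide by $a(\varepsilon)$, and, using the compensated Poisson martingale $\tilde N^{\varepsilon^{-1}\varphi^\varepsilon}$, obtain for each $\phi\in\Phi$ a decomposition
\begin{equation*}
\bar Y^{\varepsilon,\varphi^\varepsilon}(t)[\phi]=M^\varepsilon(t)[\phi]+A^\varepsilon(t)[\phi]+B^\varepsilon(t)[\phi]+\mathcal{E}_1^\varepsilon(t)[\phi]+C^\varepsilon(t)[\phi],
\end{equation*}
where $M^\varepsilon$ is the $a(\varepsilon)^{-1}\varepsilon$-scaled martingale integral of $G(\bar X^{\varepsilon,\varphi^\varepsilon}(s-),y)$ against $\tilde N^{\varepsilon^{-1}\varphi^\varepsilon}$, $A^\varepsilon$ and $B^\varepsilon$ are the rescaled drift differences coming from $b$ and from the compensator of $\varepsilon G\,dN^{\varepsilon^{-1}\varphi^\varepsilon}$, $\mathcal{E}_1^\varepsilon$ is the cross term $\int_{\mathbb{X}\times[0,t]}(G(\bar X^{\varepsilon,\varphi^\varepsilon})-G(X^0))\psi^\varepsilon\,d\nu_T$, and $C^\varepsilon(t)=\int_{\mathbb{X}\times[0,t]}G(X^0(s),y)\psi^\varepsilon(y,s)\nu(dy)ds$.

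\textbf{Vanishing of the error terms.} By Doob's inequality together with the Hilbert--Schmidt embedding of $\Phi_{-p}$ into $\Phi_{-q_1}$,
\begin{equation*}
\bar{\mathbb{E}}\sup_{t\le T}\|M^\varepsilon(t)\|_{-q_1}^2\le C\frac{\varepsilon}{a^2(\varepsilon)}\bar{\mathbb{E}}\Big[(1+\sup_{s\le T}\|\bar X^{\varepsilon,\varphi^\varepsilon}(s)\|_{-p}^2)\int_{\mathbb{X}\times[0,T]}M_G^2\varphi^\varepsilon\,d\nu_T\Big],
\end{equation*}
which vanishes by Lemmas \ref{bdY_poi_inf}(a), \ref{Ineq1}, and \eqref{eq:eq1138}. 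For $A^\varepsilon$ and $B^\varepsilon$ I would invoke the Fréchet-derivative assumptions of Condition \ref{bGcond_nuclNew}(a),(c) to write
\begin{equation*}
b(\bar X^{\varepsilon,\varphi^\varepsilon}(s))[\phi]-b(X^0(s))[\phi]=a(\varepsilon)D(b(X^0(s))[\phi])[\bar Y^{\varepsilon,\varphi^\varepsilon}(s)]+R_b^\varepsilon(s)[\phi],
\end{equation*}
and similarly for $G$, with remainders controlled by $L_{Db}(\phi)$ and $L_{DG}(\phi,y)$ times $a(\varepsilon)\|\bar Y^{\varepsilon,\varphi^\varepsilon}(s)\|_{-q}^2$; Lemma \ref{bdY_poi_inf}(b) then forces these remainders to $0$ in $\Phi_{-q_1}$. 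For $\mathcal{E}_1^\varepsilon$ one uses the Lipschitz estimate of Condition \ref{bGcond_nucl}(e) and Lemma \ref{Ineq2} (with $h=L_G\in\mathcal{H}$ by Condition \ref{bGcond_nuclNew}(d)) together with Lemma \ref{bdY_poi_inf}(b) to conclude $\bar{\mathbb{E}}\sup_{t\le T}\|\mathcal{E}_1^\varepsilon(t)\|_{-q_1}\to 0$.

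\textbf{Tightness.} Tightness of $\psi^\varepsilon 1_{\{|\psi^\varepsilon|\le\beta/a(\varepsilon)\}}$ in $B_2((M\kappa_2(1))^{1/2})$ is automatic from weak compactness. For $\bar Y^{\varepsilon,\varphi^\varepsilon}$, after absorbing the Taylor expansion, the linear drift terms $\int_0^t D(b(X^0(s))[\cdot])[\bar Y^{\varepsilon,\varphi^\varepsilon}(s)]\,ds$ and $\int_{\mathbb{X}\times[0,t]}D_xG(X^0(s),y)[\cdot]\bar Y^{\varepsilon,\varphi^\varepsilon}(s)\,\nu(dy)ds$ have increments of order $|t-s|\sup_r\|\bar Y^{\varepsilon,\varphi^\varepsilon}(r)\|_{-q}$ in $\Phi_{-q_1}$ by the bounds of Condition \ref{bGcond_nuclNew}(b) and \eqref{eq:225a}, and hence are equicontinuous in $C([0,T]:\Phi_{-q_1})$ by Lemma \ref{bdY_poi_inf}(b). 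For the main control term, applying Lemma \ref{Ineq2} to $h=M_G$ on an interval $[s,t]\subset[0,T]$ gives
\begin{equation*}
\|C^\varepsilon(t)-C^\varepsilon(s)\|_{-p}\le(1+m_T)\big(\rho(1)|t-s|^{1/2}+\vartheta(1)a(\varepsilon)\big),
\end{equation*}
producing equicontinuity in $C([0,T]:\Phi_{-p})$, and by the compact embedding $\Phi_{-p}\hookrightarrow\Phi_{-q_1}$ one obtains tightness in $C([0,T]:\Phi_{-q_1})$. Combining these with the vanishing-martingale estimate gives tightness of $\bar Y^{\varepsilon,\varphi^\varepsilon}$ in $D([0,T]:\Phi_{-q_1})$.

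\textbf{Limit identification and main obstacle.} Along any convergent subsequence with limit $(\eta,\psi)$, test against $\phi\in\Phi$. The linear drift terms pass to the limit by the continuity of $D(b(X^0(s))[\phi])$ and $D_xG(X^0(s),y)[\phi]$ combined with weak continuity of $\bar Y\mapsto\int_0^t\cdot\,ds$. For the control term, since $(s,y)\mapsto G(X^0(s),y)[\phi]$ lies in $L^2(\nu_T)$ (by Condition \ref{bGcond_nucl}(b) and \eqref{eq:eq336}) and is dominated by $(1+m_T)\|\phi\|_p M_G(y)$ with $M_G\in L^2(\nu)\cap\mathcal{H}$, Lemma \ref{ctrllim} yields $C^\varepsilon(t)[\phi]\to\int_{\mathbb{X}\times[0,t]}G(X^0(s),y)[\phi]\psi(y,s)\nu(dy)ds$. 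This establishes \eqref{limpt_poi_inf}. The main obstacle is the careful book-keeping across the three scales $p<q<q_1$: the a priori bound on the uncentered process lives only in $\Phi_{-p}$, the scaled process is bounded in $\Phi_{-q}$, while the limiting equation lives in $\Phi_{-q_1}$, and one has to repeatedly exploit the Hilbert--Schmidt embeddings both to control the stochastic integral terms in the correct (weaker) norm and to extract compactness in $C([0,T]:\Phi_{-q_1})$ from the equicontinuity statements naturally obtained in $\Phi_{-p}$.
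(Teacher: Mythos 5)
Your decomposition $\bar Y^{\varepsilon,\varphi^\varepsilon}=M^\varepsilon+A^\varepsilon+B^\varepsilon+\mathcal{E}_1^\varepsilon+C^\varepsilon$, the Taylor linearization of $A^\varepsilon$ and $B^\varepsilon$, the treatment of the vanishing error terms via Lemmas \ref{Ineq1}, \ref{Ineq2} and \ref{bdY_poi_inf}, and the identification of the limit via Lemma \ref{ctrllim} all match the paper's argument. The treatment of $C^\varepsilon$ is also sound: the $|t-s|^{1/2}$ modulus in $\Phi_{-p}$ plus the compact embedding $\Phi_{-p}\hookrightarrow\Phi_{-q_1}$ does give tightness of that term in $C([0,T]:\Phi_{-q_1})$, exactly as in Lemma \ref{lem:lem636inf}.

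There is, however, a genuine gap in how you assemble tightness of $\bar Y^{\varepsilon,\varphi^\varepsilon}$ itself. For the linearized drift terms $A_1^{\varepsilon,\varphi^\varepsilon}(t)=\int_0^t A_{X^0(s)}(\bar Y^{\varepsilon,\varphi^\varepsilon}(s))\,ds$ and $B_1^{\varepsilon,\varphi^\varepsilon}$, Conditions \ref{bGcond_nuclNew}(b) and \eqref{eq:225a} control the integrands only in the $\Phi_{-q_1}$-norm (by a multiple of $1+\Vert\bar Y^{\varepsilon,\varphi^\varepsilon}(s)\Vert_{-q}$), so what you obtain is equicontinuity \emph{and boundedness} in $C([0,T]:\Phi_{-q_1})$. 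In an infinite-dimensional Hilbert space this is not enough: Arzel\`a--Ascoli requires, in addition, that the time marginals lie in a compact subset of $\Phi_{-q_1}$, and a norm-bounded set in $\Phi_{-q_1}$ is not compact there. You cannot run the $C^\varepsilon$-argument for these terms because there is no bound on them in a smaller space such as $\Phi_{-p}$ or $\Phi_{-q}$. The paper closes exactly this hole by invoking the two-part tightness criterion of Theorem 2.5.2 of \cite{KaXi95}: (i) tightness of the real random variables $\sup_{t\le T}\Vert\bar Y^{\varepsilon,\varphi^\varepsilon}(t)\Vert_{-q}$ (immediate from Lemma \ref{bdY_poi_inf}(b), and this is where the Hilbert--Schmidt embedding $\Phi_{-q}\hookrightarrow\Phi_{-q_1}$ supplies the missing compact containment for the whole process at once), together with (ii) tightness of each scalar projection $\bar Y^{\varepsilon,\varphi^\varepsilon}[\phi]$ in $D([0,T]:\mathbb{R})$, which follows from precisely the increment estimates you derived, applied to $A_1[\phi]$, $B_1[\phi]$, $C_1[\phi]$. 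Your estimates are therefore the right ones, but they must be combined with the uniform $\Phi_{-q}$-bound through such a criterion rather than term by term in $C([0,T]:\Phi_{-q_1})$.
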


\begin{proof}
In order to prove the tightness of $\{\bar{Y}^{\varepsilon ,\varphi
^{\varepsilon }}\}_{\varepsilon \in (0,\varepsilon _{0})}$ we will apply
Theorem 2.5.2 of \cite{KaXi95}, according to which it suffices to verify
that:\newline
(a) $\{\sup_{0\leq t\leq T}\Vert \bar{Y}^{\varepsilon ,\varphi ^{\varepsilon
}}(t)\Vert _{-q}\}_{\varepsilon \in (0,\varepsilon _{0})}$ is a tight family
of $\mathbb{R}_{+}$-valued random variables,\newline
(b) for every $\phi \in \Phi $, $\{\bar{Y}^{\varepsilon ,\varphi
^{\varepsilon }}[\phi ]\}_{\varepsilon \in (0,\varepsilon _{0})}$ is tight
in $D([0,T]:\mathbb{R})$.

Note that (a) is immediate from Lemma \ref{bdY_poi_inf}(b). Consider now (b).

As in the proof for the finite-dimensional case (see the proof of Lemma \ref%
{bdY_poi}), we write $\bar{Y}^{\varepsilon ,\varphi ^{\varepsilon
}}=M^{\varepsilon ,\varphi ^{\varepsilon }}+A^{\varepsilon ,\varphi
^{\varepsilon }}+B^{\varepsilon ,\varphi ^{\varepsilon }}+\mathcal{E}%
_{1}^{\varepsilon ,\varphi ^{\varepsilon }}+C^{\varepsilon ,\varphi
^{\varepsilon }}$, where the processes on the right side are as defined in %
\eqref{eq:eq722}. Fix $\phi \in \Phi $. Using Condition \ref{bGcond_nucl}
parts (b) and (e), it follows as in the proof of Lemma \ref{bdY_poi} (see (%
\ref{eq:bound_on_M}) and \eqref{bdE}) that 
\begin{equation}
\mathbb{\bar{E}}\left[ \sup_{0\leq s\leq T}|M^{\varepsilon ,\varphi
^{\varepsilon }}(s)[\phi ]|^{2}\right] \rightarrow 0,\;\mathbb{\bar{E}}\left[
\sup_{0\leq s\leq T}|\mathcal{E}_{1}^{\varepsilon ,\varphi ^{\varepsilon
}}(s)[\phi ]|^{2}\right] \rightarrow 0,\;\mbox{ as }\varepsilon \rightarrow
0.  \label{eq:eq706O2}
\end{equation}%
Next, by Taylor's theorem and Condition \ref{bGcond_nuclNew}(c) , 
\begin{equation*}
G(\bar{X}^{\varepsilon ,\varphi ^{\varepsilon }}(s),y)[\phi
]-G(X^{0}(s),y)[\phi ]=a(\varepsilon )D_{x}(G(X^{0}(s),y)[\phi ])\bar{Y}%
^{\varepsilon ,\varphi ^{\varepsilon }}(s)+R^{\varepsilon ,\varphi
^{\varepsilon },\phi }(y,s)
\end{equation*}%
where 
\begin{equation}
|R^{\varepsilon ,\varphi ^{\varepsilon },\phi }(y,t)|\leq L_{DG}(\phi
,y)a^{2}(\varepsilon )\Vert \bar{Y}^{\varepsilon ,\varphi ^{\varepsilon
}}(t)\Vert _{-q}^{2}  \label{eq:Rbound}
\end{equation}%
Hence 
\begin{equation}
B^{\varepsilon ,\varphi ^{\varepsilon }}(t)[\phi ]=\int_{\mathbb{X}\times
\lbrack 0,t]}D_{x}\left( G(X^{0}(s),y)[\phi ]\right) \bar{Y}^{\varepsilon
,\varphi }(s)\nu (dy)ds+\mathcal{E}_{2}^{\varepsilon ,\varphi ^{\varepsilon
},\phi }(t),  \label{eq:Bbound}
\end{equation}%
where, from (\ref{eq:Rbound}), Lemma \ref{bdY_poi_inf}(b) and Condition \ref%
{bGcond_nuclNew}(c), 
\begin{equation}
\mathbb{\bar{E}}\left[ \sup_{0\leq t\leq T}|\mathcal{E}_{2}^{\varepsilon
,\varphi ^{\varepsilon },\phi }(t)|\right] \leq Ta(\varepsilon )\Vert
L_{DG}(\phi ,\cdot )\Vert _{1}\mathbb{\bar{E}}\sup_{0\leq t\leq T}\Vert \bar{%
Y}^{\varepsilon ,\varphi ^{\varepsilon },\phi }(t)\Vert _{-q}^{2}\rightarrow
0\mbox{ as }\varepsilon \rightarrow 0.  \label{eq:Ebound}
\end{equation}%
Similarly, using Condition \ref{bGcond_nuclNew}(a) 
\begin{equation*}
A^{\varepsilon ,\varphi ^{\varepsilon }}(t)[\phi ]=\int_{0}^{t}D\left(
b(X^{0}(s))[\phi ]\right) \bar{Y}^{\varepsilon ,\varphi ^{\varepsilon
}}(s)ds+\mathcal{E}_{3}^{\varepsilon ,\varphi ^{\varepsilon },\phi }(t)
\end{equation*}%
where 
\begin{equation}
\mathbb{\bar{E}}\left[ \sup_{0\leq t\leq T}|\mathcal{E}_{3}^{\varepsilon
,\varphi ^{\varepsilon }}(t)|\right] \rightarrow 0\mbox{ as }\varepsilon
\rightarrow 0.  \label{eq:eq652O2}
\end{equation}%
Combining \eqref{eq:eq706O2}--\eqref{eq:eq652O2}, we have 
\begin{align}
\bar{Y}^{\varepsilon ,\varphi ^{\varepsilon }}(t)[\phi ]& =\mathcal{E}%
^{\varepsilon ,\varphi ^{\varepsilon },\phi
}(t)+\int_{0}^{t}D(b(X^{0}(s))[\phi ])\bar{Y}^{\varepsilon ,\varphi
^{\varepsilon }}(t)ds  \notag \\
& \hspace{0.3cm}+\int_{\mathbb{X}\times \lbrack
0,t]}D_{x}(G(X^{0}(s),y)[\phi ])\bar{Y}^{\varepsilon ,\varphi ^{\varepsilon
}}(s)\ d\nu _{T}+\int_{\mathbb{X}\times \lbrack 0,t]}G(X^{0}(s),y)[\phi
]\psi ^{\varepsilon }\ d\nu _{T}  \notag \\
& \equiv \mathcal{E}^{\varepsilon ,\varphi ^{\varepsilon },\phi
}(t)+A_{1}^{\varepsilon ,\varphi ^{\varepsilon },\phi
}(t)+B_{1}^{\varepsilon ,\varphi ^{\varepsilon },\phi
}(t)+C_{1}^{\varepsilon ,\varphi ^{\varepsilon },\phi }(t)
\label{ybarep_inf}
\end{align}%
where $\mathbb{\bar{E}}[\sup_{0\leq t\leq T}|\mathcal{E}^{\varepsilon
,\varphi ^{\varepsilon },\phi }(t)|]\rightarrow 0$.

Next, from Condition \ref{bGcond_nucl}(b) we have applying Lemma \ref{Ineq2}
with $h=M_{G}$ as in the proof of \eqref{eq:eq724}, that for all $\delta>0$, 
$t\in\lbrack0,T-\delta]$, $\varepsilon>0$, 
\begin{equation*}
|C_{1}^{\varepsilon,\varphi^{\varepsilon}}(t+\delta)[\phi]-C_{1}^{%
\varepsilon,\varphi^{\varepsilon}}(t)[\phi]|\leq(1+m_{T})\Vert\phi\Vert
_{p}(\rho(1)\delta^{1/2}+\vartheta(1)a(\varepsilon)),
\end{equation*}
where $\rho$ and $\vartheta$ are as in Lemma \ref{Ineq2} and $m_{T}$ is as
in \eqref{eq:eq336}. Tightness of $C_{1}^{\varepsilon,\varphi^{%
\varepsilon}}(\cdot)[\phi]$ in $C([0,T]:\mathbb{R})$ is now immediate.

For tightness of ${B}_{1}^{\varepsilon ,\varphi ^{\varepsilon }}(t)[\phi ]$
note that from Condition \ref{bGcond_nuclNew}(c), for all $\delta >0$, $t\in
\lbrack 0,T-\delta ]$, $\varepsilon >0$ 
\begin{align*}
|B_{1}^{\varepsilon ,\varphi ^{\varepsilon }}(t+\delta )[\phi
]-B_{1}^{\varepsilon ,\varphi ^{\varepsilon }}(t)[\phi ]|& \leq \int_{%
\mathbb{X}\times \lbrack t,t+\delta ]}\Vert D_{x}(G(X^{0}(s),y)[\phi ])\Vert
_{op,-q_{1}}\Vert \bar{Y}^{\varepsilon ,\varphi ^{\varepsilon }}(s)\Vert
_{-q}d\nu _{T} \\
& \leq \Vert \phi \Vert _{q_{1}}M_{DG}^{\ast }\delta \sup_{0\leq t\leq
T}\Vert \bar{Y}^{\varepsilon ,\varphi ^{\varepsilon }}(t)\Vert _{-q}.
\end{align*}%
Tightness of ${B}_{1}^{\varepsilon ,\varphi ^{\varepsilon }}(t)[\phi ]$ in $%
C([0,T]:\mathbb{R})$ now follows from Lemma \ref{bdY_poi_inf}(b). A similar
estimate using \eqref{eq: eq429a} shows that ${A}_{1}^{\varepsilon ,\varphi
^{\varepsilon }}(t)[\phi ]$ is tight in $C([0,T]:\mathbb{R})$ as well.
Combining these tightness properties we have from \eqref{ybarep_inf} that $\{%
\bar{Y}^{\varepsilon ,\varphi ^{\varepsilon }}(\cdot )[\phi ]\}_{\varepsilon
>0}$ is tight in $D([0,T]:\mathbb{R})$ for all $\phi \in \Phi $ which proves
part (b) of the tightness criterion stated at the beginning of the proof.
Thus $\{\bar{Y}^{\varepsilon ,\varphi ^{\varepsilon }}(\cdot
)\}_{\varepsilon \in (0,\varepsilon _{0})}$ is tight in $D([0,T]:\Phi
_{-q_{1}})$. Tightness of $\{\psi ^{\varepsilon }1_{\{|\psi ^{\varepsilon
}|\leq \beta /a(\varepsilon )\}}\}_{\varepsilon \in (0,\varepsilon _{0})}$
holds for the same reason as in the proof of Lemma \ref{cont_lim_poi}, i.e.,
because they take values in a compact set.

Suppose now that $(\bar{Y}^{\varepsilon ,\varphi ^{\varepsilon }}(\cdot
),\psi ^{\varepsilon }1_{\{|\psi ^{\varepsilon }|<\beta /a(\varepsilon )\}})$
converges along a subsequence in distribution to $(\eta ,\psi )$. To prove
the result it suffices to show that for all $\phi \in \Phi $ %
\eqref{limpt_poi_inf} is satisfied. From Lemma \ref{ctrllim}, Condition \ref%
{bGcond_nucl}(b) and the tightness of ${C}_{1}^{\varepsilon ,\varphi
^{\varepsilon }}(t)[\phi ]$ shown above it follows that 
\begin{equation*}
\left( \bar{Y}^{\varepsilon ,\varphi ^{\varepsilon }},\int_{\mathbb{X}\times
\lbrack 0,\cdot ]}G(X^{0}(s),y)[\phi ]\psi ^{\varepsilon }d\nu _{T}\right)
\rightarrow \left( \eta ,\int_{\mathbb{X}\times \lbrack 0,\cdot
]}G(X^{0}(s),y)[\phi ]\psi d\nu _{T}\right)
\end{equation*}%
in $D([0,T]:\Phi _{-q_{1}}\times \mathbb{R})$. The result now follows by
using this convergence in \eqref{ybarep_inf}.
\end{proof}

\section{Example}

\label{sec:secexample} The following equation was introduced in \cite{KaXi95}
to model the spread of Poissonian point source chemical agents in the $d$%
-dimensional bounded domain $[0,l]^{d}$. The time instants, sites and
magnitude of chemical injection into the domain are modeled using a Poisson
random measure on $\mathbb{X}\times \lbrack 0,T]$, where $\mathbb{X}%
=[0,l]^{d}\times \mathbb{R}_{+}$, with a finite intensity measure. Formally
the model can be written as follows. Denote by $\tau _{i}^{\varepsilon
}(\omega )$, $i\in \mathbb{N}$, the jump times of the Poisson process with
rate $\varepsilon ^{-1}\nu (\mathbb{X})$, where $\nu $ is a finite measure
on $\mathbb{X}$, and let $(\kappa _{i},A_{i})$ be an iid sequence of $%
\mathbb{X}$-valued random variables with common distribution $\nu
_{0}(dy)=\nu (dy)/\nu (\mathbb{X})$. Let $\zeta >0$ be a small fixed
parameter and let $c_{\zeta }=\int_{\mathbb{R}^{d}}1_{B_{\zeta }(0)}(x)dx$,
where for $y\in \mathbb{R}^{d}$, $B_{\zeta }(y)=\{x\in \mathbb{R}%
^{d}:|y-x|\leq \zeta \}$. Then the model can be described by the following
equation. 
\begin{equation}
\frac{\partial }{\partial t}u(t,x)=D\Delta u(t,x)-V\cdot \nabla
u(t,x)-\alpha u(t,x)+\sum_{i=1}^{\infty }A_{i}(\omega )c_{\zeta
}^{-1}1_{B_{\zeta }(\kappa _{i})}(x)1_{\left\{ t=\tau _{i}(\omega )\right\}
},  \label{eq:model}
\end{equation}%
where for a smooth function $f$ on $\mathbb{R}^{d}$, $\Delta f=\sum_{i=1}^{d}%
\frac{\partial ^{2}f}{\partial x_{i}^{2}}$ and $\nabla f=(\frac{\partial f}{%
\partial x_{1}},\ldots ,\frac{\partial f}{\partial x_{d}})^{\prime }$, $%
\alpha \in (0,\infty )$, $D>0$, $V\in \mathbb{R}^{d}$ and $\varepsilon >0$
is a scaling parameter. The last term on the right side of \eqref{eq:model}
says that at the time instant $\tau _{i}$, $A_{i}$ amount of contaminant is
introduced which is distributed uniformly over a ball of radius $\zeta $ in $%
\mathbb{R}^{d}$ centered at $\kappa _{i}$, where for simplicity we assume
that $\kappa _{i}$ a.s. takes values in the $\zeta $-interior of $[0,l]^{d}$
(see Condition \ref{cond:kicond}).

The equation is considered with a Neumann boundary condition on the boundary
of the box. A precise formulation of equation \eqref{eq:model} is given in
terms of a SPDE driven by a Poisson random measure of the form in %
\eqref{eq:sdeg}. We now introduce a convenient CHNS to describe the solution
space. Let $\rho _{0}(x)=e^{-2\sum_{i=1}^{d}c_{i}x_{i}}$, $x=(x_{1},\ldots
,x_{d})^{\prime }$, where $c_{i}=\frac{V_{i}}{2D}$, $i=1,\ldots ,d$. Let $%
\mathbb{H}=L^{2}([0,l]^{d},\rho _{0}(x)dx)$. It can be checked that the
operator $A=D\Delta -V\cdot \nabla $ with Neumann boundary condition on $%
[0,l]^{d}$ has eigenvalues and eigenfunctions 
\begin{equation*}
\left\{ -\lambda _{\mathbf{j}},\phi _{\mathbf{j}}\right\} _{\mathbf{j}%
=(j_{1},\ldots ,j_{d})\in \mathbb{N}_{0}^{d}},
\end{equation*}%
where $\lambda _{\mathbf{j}}=\sum_{k=1}^{d}\lambda _{j_{k}}^{(k)}$, $\phi _{%
\mathbf{j}}=\prod_{k=1}^{d}\phi _{j_{k}}^{(k)}$, $\mathbf{j}=(j_{1},\ldots
,j_{d})$, 
\begin{equation*}
\phi _{0}^{(i)}(x)=\sqrt{\frac{2c_{i}}{1-e^{-2c_{i}l}}},\ \ \phi
_{j}^{(i)}(x)=\sqrt{\frac{2}{l}}e^{c_{i}x}\sin \left( \frac{j\pi }{l}%
x+\alpha _{j}^{i}\right) ,\;\alpha _{j}^{i}=\tan ^{-1}\left( -\frac{j\pi }{%
lc_{i}}\right)
\end{equation*}%
and 
\begin{equation*}
\lambda _{0}^{(i)}=0,\ \ \lambda _{j}^{(i)}=D\left( c_{i}^{2}+\left( \frac{%
j\pi }{l}\right) ^{2}\right) .
\end{equation*}%
Note that $\{\phi _{\mathbf{j}}\}$ forms a complete orthonormal set in $%
\mathbb{H}$. For $h\in \mathbb{H}$ and $n\in \mathbb{Z}$, define 
\begin{equation*}
\Vert h\Vert _{n}^{2}=\sum_{\mathbf{j}\in \mathbb{N}_{0}^{d}}\left\langle
h,\phi _{\mathbf{j}}\right\rangle _{\mathbb{H}}^{2}(1+\lambda _{\mathbf{j}%
})^{2n}
\end{equation*}%
and let 
\begin{equation*}
\Phi =\{\phi \in \mathbb{H}:\Vert \phi \Vert _{n}<\infty ,\forall n\in 
\mathbb{Z}\}.
\end{equation*}%
Let $\Phi _{n}$ be the completion of $\mathbb{H}$ with respect to the norm $%
\Vert \cdot \Vert _{n}$. In particular $\Phi _{0}=\mathbb{H}$. Then the
sequence $\{\Phi _{n}\}$ has all the properties stated in Section \ref%
{sec:sec2.4} for $\Phi =\cap _{n\in \mathbb{Z}}\Phi _{n}$ to be a CHNS.
Also, for each $n\in \mathbb{Z}$, $\{\Vert \phi _{\mathbf{j}}\Vert
_{n}^{-1}\phi _{\mathbf{j}}\}$ is a complete orthonormal system in $\Phi
_{n} $.

We will make the following assumption on $\nu $. 
\begin{equation}
\mbox{ For some }\delta >0,\int_{\mathbb{X}}e^{\delta a^{2}}\nu (dy)<\infty
,\;y=(x,a)\in \lbrack 0,l]^{d}\times \mathbb{R}_{+}.
\label{eq:MGcondition_example}
\end{equation}%
Here $\nu $ is a joint distribution on the possible locations and amounts of
pollutants. We now describe the precise formulation of equation %
\eqref{eq:model}. In fact we will consider a somewhat more general equation
that permits the magnitude of chemical injection to depend on the
concentration profile and also allows for nonlinear dependence on the field.
Consider the equation 
\begin{equation*}
X^{\varepsilon }(t)=x_{0}+\int_{0}^{t}b(X^{\varepsilon }(s))ds+\varepsilon
\int_{\mathbb{X}\times \lbrack 0,t]}G(X^{\varepsilon }(s-),y)N^{\varepsilon
^{-1}}(dy,ds),\;t\in \lbrack 0,T],
\end{equation*}%
where $N^{\varepsilon ^{-1}}$ is as in Section \ref{Sec:PRM}. The function $%
b:\Phi ^{\prime }\rightarrow \Phi ^{\prime }$ is defined as follows: for $%
v\in \Phi ^{\prime }$ and $\phi \in \Phi $, $b(v)[\phi ]\doteq b_{1}(v)[\phi
]+b_{0}(v)[\phi ]$, where $b_{1}(v)[\phi ]\doteq v[A\phi ]-\alpha v[\phi ]$
and $b_{0}:\Phi ^{\prime }\rightarrow \Phi ^{\prime }$ is defined by 
\begin{equation*}
b_{0}(v)[\phi ]\doteq \sum_{i=1}^{\ell }K_{i}(v[\eta _{1}],\ldots ,v[\eta
_{m}])\zeta _{i}[\phi ],\;v\in \Phi ^{\prime },\phi \in \Phi ,
\end{equation*}%
where $K_{i}:\mathbb{R}^{m}\rightarrow \mathbb{R}$ and $\{\eta
_{j}\}_{j=1}^{m}$, $\{\zeta _{i}\}_{i=1}^{\ell }$ are given elements in $%
\Phi $. Also, $G:\Phi ^{\prime }\times \mathbb{X}\rightarrow \Phi ^{\prime }$
is as follows. For $v\in \Phi ^{\prime }$, $y=(x,a)\in \mathbb{X}$ and $\phi
\in \Phi $ 
\begin{equation*}
G(v,y)[\phi ]\doteq aG_{1}(v)c_{\zeta }^{-1}\int_{B_{\zeta }(x)\cap \lbrack
0,l]^{d}}\phi (z)\rho _{0}(z)dz,
\end{equation*}%
where $G_{1}:\Phi ^{\prime }\rightarrow \mathbb{R}$ is given by 
\begin{equation*}
G_{1}(v)\doteq K_{0}(v[\eta _{1}],\ldots ,v[\eta _{m}]),v\in \Phi ^{\prime },
\end{equation*}%
and $K_{0}:\mathbb{R}^{m}\rightarrow \mathbb{R}$. Equation \eqref{eq:model}
corresponds to the case $b_{0}=0$ and $G_{1}=1$. We will make the following
assumption on $\{K_{i}\}_{i=0}^{p}$.

\begin{condition}
\label{cond:kicond}

\begin{enumerate}
\item[(a)] For some $L_{K}\in (0,\infty )$ 
\begin{equation*}
\max_{i=0,\ldots ,\ell }|K_{i}(x)-K_{i}(x^{\prime })|\leq L_{K}|x-x^{\prime
}|,\mbox{ for all }x,x^{\prime }\in \mathbb{R}^{m}.
\end{equation*}

\item[(b)] For each $i=0,\ldots ,\ell $, $K_{i}$ is differentiable and for
some $L_{DK}\in (0,\infty )$ 
\begin{equation*}
\max_{i=0,\ldots ,\ell }|\nabla K_{i}(x)-\nabla K_{i}(x^{\prime })|\leq
L_{DK}|x-x^{\prime }|,\mbox{ for all }x,x^{\prime }\in \mathbb{R}^{m}.
\end{equation*}

\item[(c)] $\nu _{0}\{(x,a):B_{\zeta }(x)\subset \lbrack 0,l]^{d}\}=1$.
\end{enumerate}
\end{condition}

Suppose that $x_{0}\in \Phi _{-p}$. We next verify that the functions $b$
and $G$ satisfy Conditions \ref{bGcond_nucl} and \ref{bGcond_nuclNew}.
Choose $q=p+r$ and $q_{1}=p+2r$ where $r>0$ is such that $\sum_{\mathbf{j}%
\in \mathbb{N}_{0}^{d}}\lambda _{\mathbf{j}}^{2}(1+\lambda _{\mathbf{j}%
})^{-2r}<\infty $. Then the embeddings $\Phi _{-p}\subset \Phi _{-q}$ and $%
\Phi _{-q}\subset \Phi _{-q_{1}}$ are Hilbert-Schmidt.

We first verify that $b$ satisfies the required conditions. Clearly $b$ is a
continuous function from $\Phi _{-p}$ to $\Phi _{-q}$. Also, for $v\in \Phi
_{-p}$ 
\begin{equation*}
\Vert b_{1}(v)\Vert _{-q}^{2}=\sum_{\mathbf{j}\in \mathbb{N}%
_{0}^{d}}(v[A\phi _{\mathbf{j}}^{q}]-\alpha v[\phi _{\mathbf{j}%
}^{q}])^{2}=\sum_{\mathbf{j}\in \mathbb{N}_{0}^{d}}(\lambda _{\mathbf{j}%
}+\alpha )^{2}(v[\phi _{\mathbf{j}}^{q}])^{2}\leq c_{\lambda }\Vert v\Vert
_{-p}^{2},
\end{equation*}%
where $c_{\lambda }=\sup_{\mathbf{j}\in \mathbb{N}_{0}^{d}}\{(\lambda _{%
\mathbf{j}}+\alpha )^{2}(1+\lambda _{\mathbf{j}})^{-2r}\}$ and the last
inequality follows on noting that, for $n\in \mathbb{Z}$, $\Vert \phi _{%
\mathbf{j}}\Vert _{n}=(1+\lambda _{\mathbf{j}})^{n}$.

Also, using Condition \ref{cond:kicond}(a) it is easily verified that for
some $C_{1}\in (0,\infty )$ 
\begin{equation}
\Vert b_{0}(v)\Vert _{-p}^{2}\leq C_{1}(1+\Vert v\Vert _{-p})^{2},\;%
\mbox{
for all }v\in \Phi _{-p}.  \label{eq:eq751O3}
\end{equation}%
Combining the above two estimates we see that $b$ satisfies Condition \ref%
{bGcond_nucl}(b). Next, using the observation that $\alpha \geq 0$ and $%
\lambda _{\mathbf{j}}\geq 0$ for all $\mathbf{j}$, we see that $2b_{1}(\phi
)[\theta _{p}\phi ]\leq 0$ for all $\phi \in \Phi $. Also, using %
\eqref{eq:eq751O3} it is immediate that 
\begin{equation*}
2b_{0}(\phi )[\theta _{p}\phi ]\leq C_{1}\Vert \phi \Vert _{-p}(1+\Vert \phi
\Vert _{-p}),\;\mbox{ for all }\phi \in \Phi .
\end{equation*}%
This shows that $b$ satisfies Condition \ref{bGcond_nucl}(c).

Once again using the nonnegativity of $-A$ and $\alpha $ we see that 
\begin{equation*}
\langle u-u^{\prime },b_{1}(u)-b_{1}(u^{\prime })\rangle _{-q}\leq 0,%
\mbox{
for all }u,u^{\prime }\in \Phi _{-p}.
\end{equation*}%
Also, by the Lipschitz property of $K_{i}$ (Condition \ref{cond:kicond}(a))
we see that 
\begin{equation*}
\Vert b_{0}(u)-b_{0}(u^{\prime })\Vert _{-q}\leq C_{2}\Vert u-u^{\prime
}\Vert _{-q},\mbox{
for all }u,u^{\prime }\in \Phi _{-p}.
\end{equation*}%
Combining the two inequalities shows that $b$ satisfies Condition \ref%
{bGcond_nucl}(d).

Next we verify that $b$ satisfies Condition \ref{bGcond_nuclNew}. Note that
for $\phi \in \Phi $ the map $\Phi _{-q}\ni v\mapsto b_{1}(v)[\phi ]\in 
\mathbb{R}$ is Fr\'{e}chet differentiable and 
\begin{equation*}
D(b_{1}(v)[\phi ])[\eta ]=\eta \lbrack A\phi ]-\alpha \eta \lbrack \phi ],%
\mbox{ for 
all }\eta \in \Phi _{-q}.
\end{equation*}%
Thus Condition \ref{bGcond_nuclNew}(a) holds trivially for $b_{1}$. Also,
from differentiability of $K_{i}$ it follows that $b_{0}(v)[\phi ]$ is Fr%
\'{e}chet differentiable and for $\eta \in \Phi _{-q}$ 
\begin{equation*}
D(b_{0}(v)[\phi ])[\eta ]=\sum_{i=1}^{\ell }\sum_{j=1}^{m}\frac{\partial }{%
\partial x_{j}}K_{i}(v[\eta _{1}],\ldots ,v[\eta _{m}])\eta \lbrack \eta
_{j}]\zeta _{i}[\phi ].
\end{equation*}%
Using the Lipschitz property of $\nabla K_{i}$ (Condition \ref{cond:kicond}%
(b)) it is now easy to check that $b_{0}$ and consequently $b$ satisfies
Condition \ref{bGcond_nuclNew}(a) as well.

Next, for $v\in \Phi _{-p}$ and $\eta \in \Phi _{-q}$ 
\begin{equation}
\sum_{\mathbf{j}\in \mathbb{N}_{0}^{d}}|D(b_{1}(v)[\phi _{\mathbf{j}%
}^{q_{1}}])[\eta ]|^{2}=\sum_{\mathbf{j}\in \mathbb{N}_{0}^{d}}|\eta \lbrack
(A-\alpha )\phi _{\mathbf{j}}^{q_{1}}]|^{2}=\sum_{\mathbf{j}\in \mathbb{N}%
_{0}^{d}}(\alpha +\lambda _{\mathbf{j}})^{2}(1+\lambda _{\mathbf{j}%
})^{-2r}|\eta \lbrack \phi _{\mathbf{j}}^{q_{1}}]|^{2}\leq c_{\lambda }\Vert
\eta \Vert _{-q}^{2}.  \label{eq:eq1006O3}
\end{equation}%
Also using the linear growth of $\nabla K_{i}$ (which follows from Condition %
\ref{cond:kicond}(b)), there is a $C_{3}\in (0,\infty )$ such that 
\begin{equation}
\sup_{\{v\in \Phi _{-p}:\Vert v\Vert _{-p}\leq m_{T}\}}\sum_{\mathbf{j}\in 
\mathbb{N}_{0}^{d}}|D(b_{0}(v)[\phi _{\mathbf{j}}^{q_{1}}])[\eta ]|^{2}\
\leq C_{3}\Vert \eta \Vert _{-q}^{2}\sum_{i=1}^{\ell }\sum_{\mathbf{j}\in 
\mathbb{N}_{0}^{d}}|\zeta _{i}[\phi _{\mathbf{j}}^{q_{1}}]|^{2}.
\label{eq:eq1015}
\end{equation}%
Combining \eqref{eq:eq1006O3} and \eqref{eq:eq1015} we get that for $\eta
\in \Phi _{-q}$ and $\phi \in \Phi _{q_{1}}$ 
\begin{equation*}
\sup_{\{v\in \Phi _{-p}:\Vert v\Vert _{-p}\leq m_{T}\}}|A_{v}(\eta )[\phi
]|^{2}\leq \left( \sum_{\mathbf{j}\in \mathbb{N}_{0}^{d}}|\langle \phi ,\phi
_{\mathbf{j}}^{q_{1}}\rangle _{q_{1}}|A_{v}(\eta )[\phi _{\mathbf{j}%
}^{q_{1}}]|\right) ^{2}\leq 2(c_{\lambda }+C_{4})\Vert \phi \Vert
_{q_{1}}^{2}\Vert \eta \Vert _{-q}^{2},
\end{equation*}%
where $C_{4}=C_{3}\sum_{i=1}^{\ell }\sum_{\mathbf{j}\in \mathbb{N}%
_{0}^{d}}|\zeta _{i}[\phi _{\mathbf{j}}^{q_{1}}]|^{2}$. This shows that $%
\eta \mapsto A_{v}(\eta )$ is continuous and that $b$ satisfies 
\eqref{eq:
eq429a}.

Using the non-negativity of $-A$ and $\alpha $ it follows that for all $\eta
\in \Phi _{-q}$, $A_{v}^{1}(\eta )\doteq D(b_{1}(v)[\cdot ])[\eta ]\in \Phi
_{-q_{1}}$ satisfies 
\begin{equation*}
\langle \eta ,A_{v}^{1}(\eta )\rangle _{-q_{1}}\leq 0.
\end{equation*}%
Also, from linear growth of $DK_{i}$, $A_{v}^{0}(\eta )\doteq
D(b_{0}(v)[\cdot ])[\eta ]\in \Phi _{-q_{1}}$ satisfies, for some $C_{5}\in
(0,\infty )$ 
\begin{equation*}
\sup_{\{v\in \Phi _{-p}:\Vert v\Vert _{-p}\leq m_{T}\}}\langle \eta
,A_{v}^{0}(\eta )\rangle _{-q_{1}}\leq C_{5}\Vert \eta \Vert _{-q_{1}}^{2},%
\mbox{ for all }\eta \in \Phi _{-q}.
\end{equation*}%
Combining the last two estimates, for all $\eta _{1},\eta _{2}\in \Phi _{-q}$
\begin{align*}
\sup_{\{v\in \Phi _{-p}:\Vert v\Vert _{-p}\leq m_{T}\}}\langle \eta
_{1}-\eta _{2},A_{v}(\eta _{1})-A_{v}(\eta _{2})\rangle _{-q_{1}}& \leq
\sup_{\{v\in \Phi _{-p}:\Vert v\Vert _{-p}\leq m_{T}\}}\langle \eta
_{1}-\eta _{2},A_{v}^{0}(\eta _{1}-\eta _{2})\rangle _{-q_{1}} \\
& \leq C_{5}\Vert \eta _{1}-\eta _{2}\Vert _{-q_{1}}^{2}.
\end{align*}%
This verifies \eqref{eq: eq429b}.

Next noting for all $u\in \Phi _{-p}$ and $\phi \in \Phi $ that $u[(A-\alpha
)(\theta _{q}\phi )]\leq C_{6}\Vert u\Vert _{-p}\Vert \phi \Vert _{-q}$ and $%
\phi \lbrack (A-\alpha )(\theta _{q}\phi )]\leq 0$, we see that for all $%
u\in \Phi _{-p}$ 
\begin{equation*}
\sup_{\{v\in \Phi _{-p}:\Vert v\Vert _{-p}\leq m_{T}\}}2A_{v}^{1}(\phi
+u)[\theta _{q}\phi ]\leq 2C_{6}\Vert u\Vert _{-p}\Vert \phi \Vert _{-q}.
\end{equation*}%
Also, using the linear growth of $\nabla K_{i}$ we see that, for some $%
C_{7}\in (0,\infty )$ 
\begin{equation*}
\sup_{\{v\in \Phi _{-p}:\Vert v\Vert _{-p}\leq m_{T}\}}2A_{v}^{0}(\phi
+u)[\theta _{q}\phi ]\leq C_{7}(\Vert \phi \Vert _{-q}+\Vert u\Vert
_{-p})\Vert \phi \Vert _{-q},\mbox{
for all }u\in \Phi _{-p},\phi \in \Phi .
\end{equation*}%
From the last two inequalities we have \eqref{eq: eq429c}.

Conditions for $G$ are verified in a similar fashion. In particular note
that for $\phi \in \Phi _{0}$ and $x\in \mathbb{R}^{d}$ such that $B_{\zeta
}(x)\subset \lbrack 0,l]^{d}$ 
\begin{equation*}
ac_{\zeta }^{-1}\int_{B_{\zeta }(x)}|\phi (z)|\rho _{0}(z)dz\leq aC_{8}\Vert
\phi \Vert _{0},
\end{equation*}%
where $C_{8}=c_{\zeta }^{-1/2}\sup_{z\in \lbrack 0,l]^{d}}\rho _{0}(z)$.
From this it is immediate that for some $C_{9}<\infty $, $G$ satisfies
Condition \ref{bGcond_nucl} with $M_{G}(y)=L_{G}(y)=aC_{9}$, $y=(x,a)\in 
\mathbb{X}$. Note that in view of (\ref{eq:MGcondition_example}) $%
M_{G},L_{G} $ satisfy part (d) of Condition \ref{bGcond_nuclNew}. Remaining
parts of this condition are verified similarly and we omit the details.

\setcounter{section}{0}

\setcounter{equation}{0} \renewcommand{\theequation}{\thesection.%
\arabic{equation}}


\begin{thebibliography}{99}
\bibitem{Am79} N.~N. Amosova. \newblock Probabilities of moderate
deviations. \newblock \emph{Zap. Nauchn. Sem. Leningrad. Otdel. Mat. Inst.
Steklov. (LOMI)}, 85:\penalty0 6--16, 237, 244, 1979. \newblock ISSN
0207-6772. \newblock Investigations in the theory of probability
distributions, IV.

\bibitem{Am80} N.~N. Amosova. \newblock Local limit theorems for the
probabilities of moderate deviations. \newblock \emph{Mat. Zametki}, 28%
\penalty0 (3):\penalty0 433--442, 479, 1980. \newblock ISSN 0025-567X.

\bibitem{Am82} N.~N. Amosova and V.-D. Rikhter. \newblock Moderate
deviations for densities in {$\mathbf{R}^{k}$}. \newblock \emph{Zap. Nauchn.
Sem. Leningrad. Otdel. Mat. Inst. Steklov. (LOMI)}, 119:\penalty0 7--13,
237, 244, 1982. \newblock ISSN 0206-8540. \newblock Problems of the theory
of probability distribution, VII.

\bibitem{Ar03} M.~A. Arcones. \newblock Moderate deviations of empirical
processes. \newblock In \emph{Stochastic Inequalities and Applications},
volume~56 of \emph{Progr. Probab.}, pages 189--212. Birkh\"{a}user, Basel,
2003.

\bibitem{asmgly} S. Asmussen and P.W. Glynn. \newblock \emph{Stochastic
Simulation: Algorithms and Analysis}. \newblock Springer Science+Business
Media, LLC, New York, 2007.

\bibitem{BS78} G.~Jogesh Babu and K. Singh. \newblock Probabilities of
moderate deviations for some stationary strong-mixing processes. \newblock 
\emph{Sankhy\=a Ser. A}, 40\penalty0 (1):\penalty0 38--43, 1978.

\bibitem{BM78} A.~A. Borovkov and A.~A. Mogulski{\u\i}. \newblock %
Probabilities of large deviations in topological spaces. {I}. \newblock 
\emph{Sibirsk. Mat. Zh.}, 19\penalty0 (5):\penalty0 988--1004, 1213, 1978. %
\newblock ISSN 0037-4474.

\bibitem{BM80} A.~A. Borovkov and A.~A. Mogul'ski{\u\i}. \newblock %
Probabilities of large deviations in topological spaces. {II}. \newblock 
\emph{Sibirsk. Mat. Zh.}, 21\penalty0 (5):\penalty0 12--26, 189, 1980. %
\newblock ISSN 0037-4474.

\bibitem{BD00} A. Budhiraja and P. Dupuis. A variational representation for
positive functionals of an infinite dimensional Brownian motion. \textit{%
Probab. Math. Statist.}, 20 (1): 39 -- 61, (2000).

\bibitem{BCD} A. Budhiraja, J. Chen and P. Dupuis. Large deviations for
stochastic partial differential equations driven by a Poisson random
measure. \textit{Stochastic Process. Appl.}, 123 (2): 523 -- 560, (2013).

\bibitem{BDM08} A. Budhiraja, P. Dupuis, and V. Maroulas. %
\newblock Large deviations for infinite dimensional stochastic dynamical
systems. \newblock \emph{Ann. Probab.}, 36\penalty0 (4):\penalty0
1390--1420, 2008.

\bibitem{BDM09} A. Budhiraja, P. Dupuis and V. Maroulas. Variational
representations for continuous time processes. \textit{\ Annales de
l'Institut Henri Poincar\'e(B) Probabilit\'es et Statistiques}, {47}(3):725
-- 747, 2011.

\bibitem{Ch91} X. Chen. \newblock Moderate deviations of independent random
vectors in a {B}anach space. \newblock \emph{Chinese J. Appl. Probab.
Statist.}, 7\penalty0 (1):\penalty0 24--32, 1991.

\bibitem{Ch97} X. Chen. \newblock Moderate deviations for {$m$}-dependent
random variables with {B}anach space values. \newblock \emph{Statist.
Probab. Lett.}, 35\penalty0 (2):\penalty0 123-- 134, 1997. \newblock ISSN
0167-7152.

\bibitem{Ch01} X. Chen. \newblock Moderate deviations for {M}arkovian
occupation times. \newblock \emph{Stochastic Process. Appl.}, 94\penalty0
(1):\penalty0 51- -70, 2001. \newblock ISSN 0304-4149.

\bibitem{CG04} X. Chen and A. Guillin. \newblock The functional
moderate deviations for {H}arris recurrent {M}arkov chains and applications. %
\newblock \emph{Ann. Inst. H. Poincar\'e Probab. Statist.}, 40\penalty0 (1):%
\penalty0 89--124, 2004. \newblock ISSN 0246-0203.

\bibitem{dAc92} A.~de~Acosta. \newblock Moderate deviations and associated {L%
}aplace approximations for sums of independent random vectors. \newblock 
\emph{Trans. Amer. Math. Soc.}, 329\penalty0 (1):\penalty0 357- -375, 1992. %
\newblock ISSN 0002-9947.

\bibitem{CdA97} A.~de~Acosta. \newblock Moderate deviations for empirical
measures of {M}arkov chains: lower bounds. \newblock \emph{Ann. Probab.}, 25%
\penalty0 (1):\penalty0 259--284, 1997. \newblock ISSN 0091-1798.

\bibitem{CdA98} A.~de~Acosta and X. Chen. \newblock Moderate deviations for
empirical measures of {M}arkov chains: upper bounds. \newblock \emph{J.
Theoret. Probab.}, 11\penalty0 (4):\penalty0 1075-- 1110, 1998. \newblock %
ISSN 0894-9840.

\bibitem{De96} A.~Dembo. \newblock Moderate deviations for martingales with
bounded jumps. \newblock \emph{Electron. Comm. Probab.}, 1:\penalty0 no.\ 3,
11--17 (electronic), 1996. \newblock ISSN 1083-589X.

\bibitem{DZ97} A. Dembo and T. Zajic. \newblock Uniform large and
moderate deviations for functional empirical processes. \newblock \emph{%
Stochastic Process. Appl.}, 67\penalty0 (2):\penalty0 195--211, 1997. %
\newblock ISSN 0304-4149.

\bibitem{DB83} C.~M. Deo and G.~J. Babu. \newblock %
Probabilities of moderate deviations in a {B}anach space. \newblock \emph{%
Proc. Amer. Math. Soc.}, 83\penalty0 (2):\penalty0 392-- 397, 1981. %
\newblock ISSN 0002-9939.

\bibitem{dupwan} P. Dupuis and H. Wang. \newblock Subsolutions of an Isaacs
equation and efficient schemes for importance sampling. \newblock \emph{%
Math. Oper. Res.}, 32, 1--35, 2007.

\bibitem{Dj02} H. Djellout. \newblock Moderate deviations for
martingale differences and applications to {$\phi$}-mixing sequences. %
\newblock \emph{Stoch. Stoch. Rep.}, 73\penalty0 (1-2):\penalty0 37--63,
2002. \newblock ISSN 1045-1129.

\bibitem{DGW99} H. Djellout, Arnaud Guillin, and Liming Wu. %
\newblock Large and moderate deviations for estimators of quadratic
variational processes of diffusions. \newblock \emph{Stat. Inference Stoch.
Process.}, 2\penalty0 (3):\penalty0 195--225 (2000), 1999. \newblock ISSN
1387-0874.

\bibitem{DuSc} N. Dunford and J. Schwartz. \textit{Linear Operators, Part I,
General Theory.} Wiley, New York, 1988.

\bibitem{DE97} P. Dupuis and R.~S. Ellis. \newblock\emph{A Weak
Convergence Approach to the Theory of Large Deviations}. \newblock Wiley
Series in Probability and Statistics: Probability and Statistics. John Wiley
\& Sons Inc., New York, 1997. \newblock A Wiley-Interscience Publication.

\bibitem{FK06} J. Feng and T.~G. Kurtz. \newblock\emph{Large Deviations
for Stochastic Processes}, volume 131 of \emph{Mathematical Surveys and
Monographs}. \newblock American Mathematical Society, Providence, RI, 2006. %
\newblock ISBN 978-0-8218-4145-7; 0-8218-4145-9.

\bibitem{FW84} M.~I. Freidlin and A.~D. Wentzell. \newblock\emph{Random
Perturbations of Dynamical Systems}, volume 260 of \emph{Grundlehren der
Mathematischen Wissenschaften [Fundamental Principles of Mathematical
Sciences]}. \newblock Springer-Verlag, New York, 1984. \newblock Translated
from the Russian by Joseph Sz{\"{u}}cs.

\bibitem{Fr05} A.~N. Frolov. \newblock On the probabilities of moderate
deviations of sums of independent random variables. \newblock \emph{Zap.
Nauchn. Sem. S.-Peterburg. Otdel. Mat. Inst. Steklov. (POMI)}, 294\penalty0
(Veroyatn. i Stat. 5):\penalty0 200--215, 264, 2002. \newblock ISSN
0373-2703.

\bibitem{GZ98} N.~Gantert and O.~Zeitouni. \newblock Large and moderate
deviations for the local time of a recurrent {M}arkov chain on {$\mathbf{Z}%
^2 $}. \newblock \emph{Ann. Inst. H. Poincar\'e Probab. Statist.}, 34\penalty%
0 (5):\penalty0 687--704, 1998. \newblock ISSN 0246-0203.

\bibitem{Ga95} F. Gao. \newblock Uniform moderate deviations of {M}%
arkov processes---the discrete parameter case. \newblock \emph{Acta Math.
Sinica}, 38\penalty0 (4):\penalty0 543--552, 1995. \newblock ISSN 0583-1431.

\bibitem{Ga96} F. Gao. \newblock Moderate deviations for martingales
and mixing random processes. \newblock \emph{Stochastic Process. Appl.}, 61%
\penalty0 (2):\penalty0 263--275, 1996. \newblock ISSN 0304-4149.

\bibitem{GJ09} F. Gao and H. Jiang. \newblock Moderate deviations for
squared {O}rnstein-{U}hlenbeck process. \newblock\emph{Statist. Probab. Lett.%
}, 79\penalty0 (11):\penalty0 1378--1386, 2009. \newblock ISSN 0167-7152.

\bibitem{Gh74} M. Ghosh. \newblock Probabilities of moderate deviations
under {$m$}-dependence. \newblock \emph{Canad. J. Statist.}, 2\penalty0 (2):%
\penalty0 157--168, 1974. \newblock ISSN 0319-5724.

\bibitem{Gh75} M. Ghosh. \newblock Probabilities of moderate deviations
for non-stationary {$m$}-dependent processes with unbounded {$m$}. 
\newblock 
\emph{Calcutta Statist. Assoc. Bull.}, 24\penalty0 (93-- 96):\penalty0 1--1
(1976), 1975. \newblock ISSN 0008-0683.

\bibitem{GB77} M. Ghosh and G.~J. Babu. \newblock Probabilities of
moderate deviations for some stationary {$\phi $}-mixing processes. %
\newblock \emph{Ann. Probability}, 5\penalty0 (2):\penalty0 222--234, 1977.

\bibitem{Gr97} I.~G. Grama. \newblock On moderate deviations for
martingales. \newblock \emph{Ann. Probab.}, 25\penalty0 (1):\penalty0
152--183, 1997. \newblock ISSN 0091-1798.

\bibitem{GH06} I.~G. Grama and E.~Haeusler. \newblock An asymptotic
expansion for probabilities of moderate deviations for multivariate
martingales. \newblock \emph{J. Theoret. Probab.}, 19\penalty0 (1):\penalty0
1--44, 2006. \newblock ISSN 0894-9840.

\bibitem{Gu00} A.~Guillin. \newblock Uniform moderate deviations of
functional empirical processes of {M}arkov chains. \newblock \emph{Probab.
Math. Statist.}, 20\penalty0 (2, Acta Univ. Wratislav. No. 2256):\penalty0
237--260, 2000. \newblock ISSN 0208-4147.

\bibitem{Gu03} A.~Guillin. \newblock Averaging principle of {SDE} with small
diffusion: moderate deviations. \newblock \emph{Ann. Probab.}, 31\penalty0
(1):\penalty0 413--443, 2003. \newblock ISSN 0091-1798.

\bibitem{Gu01} A. Guillin. \newblock Moderate deviations of
inhomogeneous functionals of {M}arkov processes and application to
averaging. \newblock \emph{Stochastic Process. Appl.}, 92\penalty0 (2):%
\penalty0 287--313, 2001. \newblock ISSN 0304-4149.

\bibitem{kallianpur1994} G. Hardy, G. Kallianpur, S. Ramasubramanian and J.
Xiong. The existence and uniqueness of solutions of nuclear space-valued
stochastic differential equations driven by Poisson random measures. \textit{%
Stochastics,} 50:85 -- 122, 1994.

\bibitem{HS04} Y. Hu and Z. Shi. \newblock Moderate deviations for
diffusions with {B}rownian potentials. \newblock \emph{Ann. Probab.}, 32%
\penalty0 (4):\penalty0 3191--3220, 2004. \newblock ISSN 0091-1798.

\bibitem{Hu10} J. Hui. \newblock Moderate deviations for estimators of
quadratic variational process of diffusion with compound {P}oisson jumps. %
\newblock \emph{Statist. Probab. Lett.}, 80\penalty0 (17-18):\penalty0
1297--1305, 2010. \newblock ISSN 0167-7152.

\bibitem{Ikeda} N.~Ikeda and S.~Watanabe. 
\newblock {\em Stochastic
Differential Equations and Diffusion Processes}. \newblock North-Holland
Publishing Co., Amsterdam, 1981.

\bibitem{JacShi} J. Jacod and A.N. Shiryaev. \textit{Limit Theorems for
Stochastic Processes}. Springer-Verlag, 1987.

\bibitem{KaXi95} G. Kallianpur and J. Xiong. \newblock\emph{%
Stochastic Differential Equations in Infinite-dimensional Spaces}. \newblock %
Institute of Mathematical Statistics Lecture Notes---Monograph Series, 26.
Institute of Mathematical Statistics, 1995.

Mathematical 

\bibitem{kurpro} T.G. Kurtz and P. Protter. Weak convergence of stochastic
integrals and differential equations. {II}. {I}nfinite-dimensional case.
Probabilistic models for nonlinear partial differential equations ({M}%
ontecatini {T}erme, 1995, Lecture Notes in Math. 1627, 197--285, Springer,
Berlin, 1996.

\bibitem{Le92} M. Ledoux. \newblock Sur les d\'eviations mod\'er\'ees
des sommes de variables al\'eatoires vectorielles ind\'ependantes de m\^eme
loi. \newblock \emph{Ann. Inst. H. Poincar\'e Probab. Statist.}, 28\penalty0
(2):\penalty0 267--280, 1992. \newblock ISSN 0246-0203.

\bibitem{LS99} R.~Liptser and V.~Spokoiny. \newblock Moderate deviations
type evaluation for integral functionals of diffusion processes. \newblock 
\emph{Electron. J. Probab.}, 4:\penalty0 no. 17, 25 pp. (electronic), 1999. %
\newblock ISSN 1083-6489.

\bibitem{CL09} R.~Liptser and P.~Chiganski{\u\i}. \newblock Moderate
deviations for a diffusion-type process in a random environment. \newblock 
\emph{Teor. Veroyatn. Primen.}, 54\penalty0 (1):\penalty0 39-- 62, 2009. %
\newblock ISSN 0040-361X.

\bibitem{LP92} R.~Liptser and A.~A. Pukhalskii. \newblock %
Limit theorems on large deviations for semimartingales. \newblock \emph{%
Stochastics Stochastics Rep.}, 38\penalty0 (4):\penalty0 201--249, 1992. %
\newblock ISSN 1045-1129.

\bibitem{liu} J.S. Liu. 
\newblock {\em Monte Carlo Strategies in Scientific
Computing}. \newblock Springer, New York, 2004.

\bibitem{MSS09} U.~Manna, S.~S. Sritharan, and P.~Sundar. \newblock Large
deviations for the stochastic shell model of turbulence. \newblock \emph{%
NoDEA Nonlinear Differential Equations Appl.}, 16\penalty0 (4):\penalty0
493--521, 2009. \newblock ISSN 1021-9722.

\bibitem{Mi76} R.~Michel. \newblock Nonuniform central limit bounds with
application to probabilities of deviations. \newblock \emph{Ann. Probability}%
, 4\penalty0 (1):\penalty0 102--106, 1976.

\bibitem{Os72} L.~V. Osipov. \newblock The probabilities of large deviations
of sums of independent random variables. \newblock \emph{Teor. Verojatnost.
i Primenen.}, 17:\penalty0 320--341, 1972. \newblock ISSN 0040-361x.

\bibitem{PZ07} S. Peszat and Z. Zabczyk. \newblock \emph{Stochastic Partial
Differential Equations with L\'{e}vy Noise} \emph{Encyclopedia of Mathematics%
}. \newblock Cambridge University Press, 2007.

\bibitem{Pe99} V.~V. Petrov. \newblock On probabilities of moderate
deviations. \newblock \emph{Zap. Nauchn. Sem. S.-Peterburg. Otdel. Mat.
Inst. Steklov. (POMI)}, 260\penalty0 (Veroyatn. i Stat. 3):\penalty0
214--217, 321, 1999. \newblock ISSN 0373-2703.

\bibitem{RZ07} M. R{\"o}ckner and T. Zhang. \newblock Stochastic
evolution equations of jump type: existence, uniqueness and large deviation
principles. \newblock \emph{Potential Anal.}, 26\penalty0 (3):\penalty0
255--279, 2007. \newblock ISSN 0926-2601.

\bibitem{RS65} H. Rubin and J.~Sethuraman. \newblock Probabilities of
moderate deviations. \newblock\emph{Sankhy\={a} Ser. A}, 27:\penalty0
325--346, 1965. \newblock ISSN 0581-572X.

\bibitem{SZ11} A. {\'S}wi{\c{e}}ch and J. Zabczyk. \newblock Large
deviations for stochastic {PDE} with {L}\'evy noise. \newblock \emph{J.
Funct. Anal.}, 260\penalty0 (3):\penalty0 674--723, 2011. \newblock ISSN
0022-1236.

\bibitem{FV70} A.~D. Ventcel and M.~I. Freidlin. \newblock Small random
perturbations of dynamical systems. \newblock \emph{Uspehi Mat. Nauk}, 25%
\penalty0 (1 (151)):\penalty0 3--55, 1970.

\bibitem{Lim95} L.~M. Wu. \newblock Moderate deviations of dependent
random variables related to {CLT}. \newblock \emph{Ann. Probab.}, 23\penalty%
0 (1):\penalty0 420--445, 1995. \newblock ISSN 0091-1798.

\end{thebibliography}
\end{document}